\documentclass[a4paper]{amsart}

\usepackage{graphicx}

\newtheorem{theorem}{Theorem}[section]
\newtheorem{lemma}[theorem]{Lemma}

\newtheorem{proposition}[theorem]{Proposition}
\newtheorem{corollary}[theorem]{Corollary}

\begin{document}

\markboth{Gyo Taek Jin and Hwa Jeong Lee}
{Prime knots whose arc index is smaller than the crossing number}

\title{Prime knots whose arc index is smaller \\ than the crossing number}

\author{Gyo Taek Jin and Hwa Jeong Lee}

\address{Department of Mathematical Sciences, KAIST, Daejeon 305-701 Korea}
\email{trefoil@kaist.ac.kr, hjwith@kaist.ac.kr}

\begin{abstract}
 It is known that the arc index of alternating knots is the minimal crossing number plus two and the arc index of prime nonalternating knots is less than or equal to the minimal crossing number. We study some cases when the arc index is strictly less than the minimal crossing number. We also give minimal grid diagrams of some prime nonalternating knots with 13 crossings and 14 crossings whose arc index is the minimal crossing number minus one.
\end{abstract}

\keywords{knot, link, arc presentation, arc index, grid diagram, minimal crossing number, Kauffman polynomial, filtered tree}

\subjclass[2000]{57M25, 57M27}
\maketitle

\section{Arc presentation}

A link\footnote{It is a knot if it has only one component.} can be embedded in a book of finitely many half planes in $\mathbb R^3$ so that each half plane intersects the link in a single arc. Such an embedding is called an \emph{arc presentation\/} of the link. The minimal number of half planes among all arc presentations of a link is
called the \emph{arc index\/} of the link. The arc index of a link $L$ is denoted by $\alpha(L)$.

Suppose we have an arc presentation of a link $L$. In each half plane containing a single arc of $L$, we deform the arc into the union of two horizontal arcs and one vertical arc with the two end points fixed. Then we have a new arc presentation of $K$ which looks like the figure in the left of Figure~\ref{fig:trefoil}. Relaxing the pairs of consecutive horizontal arcs off the axis, we obtain a diagram of $L$ as shown in the right of Figure~\ref{fig:trefoil}. The new diagram is called a \emph{grid diagram\/}. A grid diagram is a link diagram which is the union of a finitely many vertical strings and the same number of horizontal strings with the property that at every crossing the vertical string crosses over the horizontal string. The minimal number of vertical strings among all grid diagram of a link is equal to the arc index of the link.

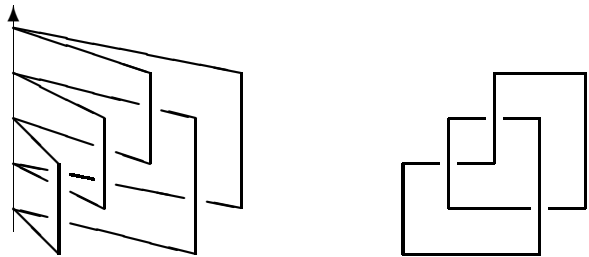
\begin{figure}[h]
\setlength{\unitlength}{0.6mm}
\centering
\begin{picture}(50,55)(-10,0)
\thicklines
\put(-10,10){\line(1,-1){10}}
\put(-10,10){\line(4,-1){7.5}} \put(2.5,6.875){\line(4,-1){27.5}}
\put(-10,20){\line(2,-1){7.5}} \put(2.5,13.75){\line(2,-1){7.5}}
\put(-10,20){\line(5,-1){7.5}} 
      \qbezier(2.5,17.6)(5,17.1)(7.5,16.6)
      \put(12.5,15.4){\line(5,-1){15}} \put(32.5,11.6){\line(5,-1){7.5}}
\put(-10,30){\line(1,-1){10}}
\put(-10,30){\line(3,-1){17.5}} \put(12.5,22.5){\line(3,-1){7.5}}
\put(-10,40){\line(2,-1){20}}
\put(-10,40){\line(4,-1){27.5}} \put(22.5,31.875){\line(4,-1){7.5}}
\put(-10,50){\line(5,-1){50}}
\put(-10,50){\line(3,-1){30}}

\linethickness{0.05mm}
\put(-10,5){\vector(0,1){50}}

\thicklines
\put(30,0){\line(0,1){30}}
\put(10,30){\line(0,-1){20}}
\put(40,10){\line(0,1){30}}
\put(20,40){\line(0,-1){20}}
\put(0,20){\line(0,-1){20}}
\end{picture}
\qquad\qquad
\begin{picture}(50,55)(-10,0)

\thicklines
\put(30,0){\line(0,1){30}}
\put(10,30){\line(0,-1){20}}
\put(40,10){\line(0,1){30}}
\put(20,40){\line(0,-1){20}}
\put(0,20){\line(0,-1){20}}
%
\put(20,40){\line(1,0){20}}
\put(10,30){\line(1,0){ 8}} \put(22,30){\line(1,0){ 8}}
\put( 0,20){\line(1,0){ 8}} \put(12,20){\line(1,0){ 8}}
\put(10,10){\line(1,0){18}} \put(32,10){\line(1,0){ 8}}
\put( 0, 0){\line(1,0){30}}
\end{picture}
\caption{An arc presentation of a trefoil knot and its grid diagram}\label{fig:trefoil}
\end{figure}

For a link $L$, let $c(L)$, $F_L(v,z)$, and $\operatorname{spr}_v(F_L(v,z))$ denote the minimal crossing number, the Kauffman polynomial, and the $v$-spread of $F_L(v,z)$, i.e., the difference between the highest degree and the lowest degree of the variable $v$ in $F_L(v,z)$, respectively.
Here we list some of the important known results about the arc index.

\begin{proposition}[Cromwell]\label{prop:arc presentation exists}
Every link admits an arc presentation.
\end{proposition}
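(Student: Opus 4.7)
The plan is to reduce the statement to the existence of a \emph{grid diagram} for $L$: the paragraph preceding the proposition explains how any grid diagram is already, essentially by inspection, an arc presentation (place each vertical string on its own half-plane rotating around the common vertical axis, and let the horizontal strings form the single arcs in those half-planes). So it suffices to produce a grid diagram for an arbitrary link $L$.

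First I would fix any regular projection $D$ of $L$ onto a plane; by a small perturbation we may assume $D$ has only finitely many transverse double points and is the image of a piecewise-linear closed curve. Next I would approximate $D$ by a rectilinear polygonal diagram $D'$ whose edges are all horizontal or vertical and in generic position (distinct $y$-coordinates among horizontal edges and distinct $x$-coordinates among vertical edges). This is a standard ``staircase'' approximation and can be carried out locally: each long edge of $D$ is replaced by a staircase made up of very short horizontal and vertical segments, and by taking the mesh sufficiently fine, we ensure that the resulting diagram has the same set of crossings, with each crossing occurring between one horizontal and one vertical edge.

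The remaining, and main, step is to arrange that at every crossing of the rectilinear diagram the vertical strand is the over-strand. I would achieve this already during the approximation: at each crossing of $D$, choose the local direction of the staircase so that the over-strand is approximated by a vertical edge and the under-strand by a horizontal edge. Away from crossings, arbitrary staircases are permitted, so this local choice propagates consistently. Where a ``bad'' crossing (horizontal over vertical) does still appear because of global constraints, I would remove it by a local Reidemeister-like modification: cut the short horizontal over-arc at the bad crossing, reroute it through a small detour consisting of two vertical segments (sliding briefly to a new $x$-coordinate), reconnect it with a horizontal segment at a fresh height, and verify that the new crossings introduced are all of the ``vertical over horizontal'' type.

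The main obstacle is this last point: showing that the bad-crossing repair does not create new bad crossings and does not destroy the rectilinear/generic position conditions. The key observation that makes this work is that the repair lives inside an arbitrarily small neighborhood of the original crossing, so by choosing its new coordinates generically outside the finitely many already-used coordinates, we keep general position, and by construction the detour crosses nothing else except at the replacement crossing, whose over/under relation is chosen correctly. Iterating this over the finitely many bad crossings yields a grid diagram of $L$, and hence an arc presentation, completing the proof.
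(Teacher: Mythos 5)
The paper states this proposition as a known result of Cromwell and supplies no proof of its own, so there is no in-paper argument to compare yours against. Your route---reduce the claim to the existence of a grid diagram and manufacture one by rectilinear (staircase) approximation---is a standard and valid alternative to Cromwell's original argument, which isotopes the arcs of an ordinary diagram directly into the pages of an open book rather than passing through grid diagrams. The reduction itself is fine, since the paragraph preceding the proposition explains the correspondence between arc presentations and grid diagrams.

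The issue is the interplay between your two mechanisms. Your ``Plan A'' (at each crossing, choose the staircase direction so the over-strand's approximation is a vertical edge and the under-strand's is a horizontal edge) is in fact already sufficient: the choice is made per arc and per crossing, there is no global obstruction of the sort you hedge against, and for a sufficiently fine mesh the two approximating staircases cross transversally exactly once near each original crossing with the prescribed over/under type. That fine-mesh argument is what needs spelling out; once done, you are finished and ``Plan B'' never fires. Plan B, by contrast, as written has a genuine gap. Rerouting the horizontal over-arc $H$ via two vertical detour segments and a horizontal segment at a ``fresh height'' only eliminates the crossing with the vertical under-strand $V$ if the fresh height lies past an endpoint of $V$; otherwise the new horizontal segment still meets $V$ and the bad crossing is merely displaced. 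Since $V$'s nearest endpoint may be far from the bad crossing, the detour is not local, so the claim that ``the detour crosses nothing else except at the replacement crossing'' is unjustified, and the new crossings the detour does create with other strands need not have the vertical strand on top---that is a topological fact about the original diagram, not something you get to declare. A repaired Plan B would first insert a small jog into $V$ to manufacture a nearby endpoint and then argue carefully that routing $H$ around it yields only good new crossings; but it is cleaner to make Plan A rigorous and drop Plan B altogether.
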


\begin{theorem}[Cromwell]\label{thm:additivity}
If $L_1$ and $L_2$ are nontrivial links, then
\begin{equation}\label{eq:alpha-2 is additive}\notag
\alpha(L_1\sharp L_2)=\alpha(L_1)+\alpha(L_2)-2
\end{equation}
\end{theorem}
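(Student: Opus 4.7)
The inequality naturally splits into an easy upper bound $\alpha(L_1\sharp L_2)\le\alpha(L_1)+\alpha(L_2)-2$ and a harder lower bound $\alpha(L_1\sharp L_2)\ge\alpha(L_1)+\alpha(L_2)-2$. I would handle the two halves separately, and I expect the lower bound to contain all the real content.

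For the upper bound I plan an explicit construction. Start with minimal arc presentations of $L_1$ and $L_2$, in $\alpha(L_1)$ and $\alpha(L_2)$ half planes respectively. Place them on a common binding axis so that the two books of pages are disjoint (for instance, one using pages on the upper range of the axis and the other using pages below, after a rigid motion). In each presentation single out one page whose arc we may take to be the ``outermost'' arc $a_i$, i.e. an arc both of whose endpoints are extremal among the endpoints on the axis coming from $L_i$. A small isotopy brings $a_1$ and $a_2$ into a common page where they can be spliced: delete $a_1$ and $a_2$, identify their inner endpoints, and connect the two outer endpoints by a single new arc in that page. The result is an arc presentation of $L_1\sharp L_2$ with $\alpha(L_1)+\alpha(L_2)-2$ pages, as required. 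A grid-diagram picture is convenient for checking that the spliced diagram really is the connected sum.

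For the lower bound I would follow the standard ``essential sphere in normal position'' strategy. Fix a minimal arc presentation of $L_1\sharp L_2$, with binding axis $A$, and choose a decomposing 2-sphere $S$ meeting the link transversely in two points $p,q$. The goal is to isotope $S$, rel.\ its intersection with the link, into a position where (i) $S$ meets $A$ transversely in exactly two points, (ii) the two link points $p,q$ lie on a single page $H_0$ of the book, and (iii) on every other page $S$ intersects that half plane in a single properly embedded arc disjoint from the link arc. Once $S$ is in this form, the half planes on either side of $S$, augmented by a suitable arc running along $S\cap H_0$, furnish arc presentations of $L_1$ and of $L_2$. Counting pages on the two sides and adding the single shared page $H_0$ on each side gives exactly the desired inequality $\alpha(L_1)+\alpha(L_2)\le\alpha(L_1\sharp L_2)+2$.

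The main obstacle, as in analogous additivity proofs for genus and bridge number, is step (ii)--(iii): producing the normal form for $S$. The plan is to use an innermost-disk / outermost-arc argument to remove, in turn, closed curves of $S\cap H$ (for each half plane $H$), then arcs of $S\cap H$ whose endpoints lie on $A$, measuring complexity by the pair (number of components of $S\cap A$, total number of arc/loop components of $S\cap\bigcup H$) ordered lexicographically. The delicate point is that a reduction of $S$ must preserve the fact that $S$ is a decomposing sphere, so one has to verify that the innermost disks used for surgery do not meet the link essentially; this is where the primeness built into the connected-sum decomposition is used. After this normalization the conclusion is a matter of bookkeeping.
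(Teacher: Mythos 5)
The paper gives no proof of this theorem at all --- it is quoted as background with a citation --- so there is no internal argument to compare yours against; your proposal has to stand on its own, and as written it does not. Start with the upper bound: your splice is off by one. Deleting the two arcs $a_1,a_2$, identifying one pair of their endpoints, and then inserting one new arc leaves $(\alpha(L_1)-1)+(\alpha(L_2)-1)+1=\alpha(L_1)+\alpha(L_2)-1$ arcs, hence that many pages. The correct construction deletes $a_1$ and $a_2$ and identifies \emph{both} pairs of endpoints, so that the $1$-string tangle obtained from $L_2$ is inserted in place of the arc $a_1$; each identified binding point is then again $2$-valent and the count is $\alpha(L_1)+\alpha(L_2)-2$. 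One must also arrange (by a cyclic reordering) that each $a_i$ joins two consecutive extremal binding points so the page angles can be assigned consistently.

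The lower bound is where all the substance lies, and your sketch defers exactly that substance to an unproved normalization lemma. An innermost-disk/outermost-arc induction can remove circles and inessential arcs of $S\cap H$ page by page, but it supplies no move that reduces $|S\cap A|$: each point of $S\cap A$ is an endpoint of an arc of $S\cap H$ in \emph{every} page simultaneously, so a surgery localized in one page cannot decrease the first entry of your lexicographic complexity, and driving $|S\cap A|$ down to two is precisely the hard step. The closing bookkeeping is also not right as stated: with $p,q$ interior to a single page $H_0$ and $m_1+m_2+1$ pages in total, counting ``pages on each side plus $H_0$'' gives $\alpha(L_1)+\alpha(L_2)\le\alpha(L_1\sharp L_2)+1$, which is too strong --- it contradicts the upper bound --- and the discrepancy comes from the fact that capping the tangle on the side of $H_0$ containing the middle segment of its link arc produces a closed circle inside that page, which is not a legal page of an arc presentation and costs an extra page to break. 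So the ``$\ge$'' half remains unproved. Note finally that where this paper actually needs lower bounds on arc index it uses the Morton--Beltrami inequality $\alpha(L)\ge\operatorname{spr}_v(F_L(v,z))+2$, which behaves additively under $\sharp$ because the Kauffman polynomial is multiplicative; that is the cheap route to the lower bound whenever the summands attain the Morton--Beltrami bound, and it avoids the sphere normalization entirely in those cases.
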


\begin{theorem}[Bae-Park\footnote{Chan-Young Park}]\label{thm:2000, Bae-Park}
If $L$ is a nonsplit link, then
\begin{equation}\label{eq:alpha<=c+2}\notag
\alpha(L)\le c(L) + 2
\end{equation}
\end{theorem}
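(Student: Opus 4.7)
The plan is to start from a minimal crossing diagram $D$ of $L$, with $n = c(L)$ crossings, and construct from $D$ a grid diagram of $L$ with at most $n+2$ vertical strings; since the arc index equals the minimum number of vertical strings over all grid presentations of $L$, this yields $\alpha(L)\le c(L)+2$.

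The first step is to place $D$ into general position and, in a small neighborhood of each crossing, rotate so that the over-strand is locally vertical and the under-strand is locally horizontal. This produces $n$ tiny vertical segments and $n$ tiny horizontal segments at the crossings. The arcs of $D$ between consecutive crossings can then be routed as staircase paths of alternating horizontal and vertical segments, turning the whole picture into a grid diagram after a final straightening. A naive execution of this procedure can use as many as $2n$ distinct vertical columns, so the real work lies in merging columns.

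The second, combinatorial step is to choose the $x$-ordering of the crossings and the staircase routings so that over-strand vertical pieces lying in a common column can be amalgamated into a single vertical string. I would attempt an induction on $n$: at each stage, identify a crossing on the outer face of $D$ and remove it, arguing that a suitably chosen removal decreases the required column count by exactly one. The nonsplit hypothesis on $L$ (and hence the connectedness of $D$) is essential in order to guarantee that an outer crossing is always available, and that the partial construction remains a valid grid diagram for the link obtained after removing that crossing.

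The main obstacle is to show that a single staircase connector between two consecutive crossings does not, on its own, introduce a new column beyond the two already occupied by its endpoints. A careful case analysis of the local configurations at an under-crossing, combined with the planarity of $D$, should confine the unavoidable new columns to two points on the outer boundary of the construction; these two \emph{closing} columns are precisely the ``$+2$'' in the final bound $\alpha(L)\le c(L)+2$.
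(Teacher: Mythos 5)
There is a genuine gap: your plan names the two hard steps but does not supply either of them, and the inductive scheme you propose is not set up correctly. First, ``removing a crossing on the outer face'' changes the link type, so it is unclear what the inductive hypothesis is supposed to produce and how the removed crossing is to be re-inserted while increasing the column count by exactly one; you give no argument for this. Second, the claim that a staircase connector ``does not introduce a new column beyond the two already occupied by its endpoints'' cannot be taken locally: in a grid diagram every vertical segment occupies its own column, so bounding the number of columns is equivalent to bounding the total number of vertical segments, and that is a global constraint on how all the connectors are routed simultaneously. This global choice is precisely where the known difficulty lies, and your sketch does not address the obstruction that can block it.

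The paper's proof (due to Bae--Park, recalled in Section~2 and rephrased via filtered spanning trees in Section~3) handles exactly these two points. It keeps the link type fixed throughout by contracting edges of the underlying $4$-valent plane graph toward a distinguished vertex $v_0$, turning $D$ step by step into a wheel diagram; Lemma~\ref{lem:no cut-point} is the key technical ingredient guaranteeing that at every stage some edge can be contracted without creating a cut-point, so the process never gets stuck. The count $c(L)+2$ does not come from ``two closing columns on the outer boundary'' but from the invariant in property~(\ref{cond:region+spoke=c+2}): the number of regions plus the number of spokes is preserved under contraction, and for a connected $4$-valent diagram with $c(D)$ vertices Euler's formula gives $c(D)+2$ regions at the start; the final $+2$ materializes when the last non-spoke loop is folded into two extra spokes. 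If you want to salvage your grid-diagram formulation, you would need an analogue of the cut-point lemma telling you in what order the crossings/arcs may be processed, and an Euler-characteristic bookkeeping device replacing your outer-boundary heuristic; without these the proposal is a restatement of the theorem's difficulty rather than a proof.
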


\begin{theorem}[Morton-Beltrami]\label{thm:1998, Morton-Beltrami}
For every link $L$, we have
\begin{equation}\label{eq:alpha>=spread+2}\notag
\alpha(L) \ge \operatorname{spr}_v(F_L(v,z)) + 2
\end{equation}
In particular, if $L$ is an alternating link, then
\begin{equation}\label{eq:alpha>=c+2}\notag
\alpha(L) \ge c(L) + 2
\end{equation}
\end{theorem}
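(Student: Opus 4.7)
The target inequality $\alpha(L) \ge \operatorname{spr}_v(F_L(v,z)) + 2$ is equivalent to showing that every arc presentation with $n$ half-planes forces $\operatorname{spr}_v(F_L(v,z)) \le n - 2$. My plan is to prove this upper bound directly on the grid diagram; the alternating corollary then follows from the classical theorem of Kauffman, Murasugi and Thistlethwaite that $\operatorname{spr}_v(F_L) = c(L)$ for any reduced alternating diagram, which combined with the main inequality gives $\alpha(L) \ge c(L) + 2$.

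First I would fix an arc presentation realising $\alpha(L) = n$ and pass to the associated grid diagram $D$ of Figure~\ref{fig:trefoil}, with $n$ vertical and $n$ horizontal strings and the uniform vertical-over-horizontal crossing convention. After choosing an orientation and computing the writhe $w(D)$, I would work with the regular-isotopy invariant $\Lambda_D(v,z) = v^{w(D)} F_L(v,z)$. Iterating the Kauffman skein relation over the crossings of $D$ expresses $\Lambda_D$ as a state sum: each complete $0/\infty$-resolution of the crossings yields a planar loop collection, weighted by powers of $v^{\pm 1}$ and $z$ at each crossing and by a factor of $\delta = (v+v^{-1})/z - 1$ for every extra loop beyond the first.

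The core step is to bound the maximum and minimum $v$-degrees appearing in this state sum. The rectangular structure of the grid should force every state to consist of loops that respect the layout of the $n$ vertical strands, so the states contributing to the extremal $v$-degrees are highly constrained. Once the extremal states have been pinned down, direct bookkeeping of the $v$-exponents coming from crossing contributions, loop counts, and the writhe renormalization should yield that the top and bottom $v$-powers in $F_L(v,z)$ differ by at most $n-2$, giving $\operatorname{spr}_v(F_L) \le \alpha(L) - 2$ as desired.

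The main obstacle is precisely this extremal-state analysis. A naive Morton--Franks--Williams-style count using Seifert circles or a braid closure would only give a bound of order $2(n-1)$, roughly twice what is needed. To sharpen this to $n-2$ one has to exploit both the book structure of the arc presentation (each page carries only one arc) and the uniform crossing convention, presumably to show that candidate extremal states either are essentially unique or cancel pairwise in the state sum. Verifying this cancellation carefully, and keeping straight the sign conventions distinguishing the Kauffman and Dubrovnik versions of $F_L$, is where I expect the proof to require the bulk of the technical work.
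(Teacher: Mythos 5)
This theorem is not proved in the paper at all: it is quoted from Morton and Beltrami's 1998 article and used as a black box, so there is no in-paper argument to compare yours against. Judged on its own terms, your proposal is not yet a proof. The entire mathematical content of the inequality $\operatorname{spr}_v(F_L)\le\alpha(L)-2$ is the sharp extremal-degree estimate, and that is exactly the step you defer: you observe yourself that the naive count gives a bound of order $2(n-1)$ and that closing the gap to $n-2$ "is where the bulk of the technical work" lies. Announcing that the extremal states "should" be constrained by the grid structure, or that they "presumably" cancel in pairs, is a statement of hope rather than an argument; nothing in the sketch identifies the extremal states, exhibits the cancellation, or even shows that the bound improves on the generic one by a single unit. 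As written, the proposal would be accepted only up to the point where the theorem actually begins.

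There is also a structural difficulty with the chosen machinery. Unlike the Kauffman bracket, the two-variable Kauffman polynomial does not admit a canonical state-sum with a clean weight of $\delta^{\,|s|-1}$ per state: the skein relation involves four diagrams and must be applied as a recursive resolution whose intermediate terms depend on the order in which crossings are switched and smoothed, so "iterating the skein relation over the crossings" does not by itself produce the well-defined sum over $0/\infty$-resolutions that your plan relies on. The published proof avoids this entirely: it works directly with the open-book presentation and argues by induction on the number of pages, removing one arc at a time via the skein relation and showing that each removal changes the attainable $v$-degrees by at most one, with the two-page unknot as base case. If you want to salvage a diagrammatic route, you would need either to set up a genuine resolution tree with controlled degrees at every node, or to adopt the inductive arc-removal strategy; in either case the extremal analysis must be carried out explicitly rather than conjectured.
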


\begin{theorem}[Jin-Park\footnote{Wang Keun Park}]\label{thm:2007, Jin-Park} A prime link $L$ is nonalternating if and only if
\begin{equation}\label{eq:alpha<=c}\notag
\alpha(L)\le c(L)
\end{equation}
\end{theorem}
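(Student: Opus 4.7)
The theorem has two directions. The implication $(\Leftarrow)$ is essentially immediate from Theorem~\ref{thm:1998, Morton-Beltrami}: an alternating link satisfies $\alpha(L)\ge c(L)+2$, so the inequality $\alpha(L)\le c(L)$ forces $L$ to be nonalternating. Primeness is not needed in this direction; its role is confined to the converse.

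For the substantive direction $(\Rightarrow)$, I would start from a reduced minimal crossing diagram $D$ of the prime nonalternating link $L$. Thus $D$ has $c(L)$ crossings, is free of nugatory crossings (because $L$ is prime and $D$ realizes $c(L)$), and is nonalternating. The goal is to produce from $D$ an arc presentation — equivalently, a grid diagram — of $L$ with at most $c(L)$ vertical strands. The plan is to refine the Bae--Park construction underlying Theorem~\ref{thm:2000, Bae-Park}, which from any reduced minimal diagram of a nonsplit link produces a grid diagram with $c(L)+2$ vertical strands, roughly one per over-arc of $D$ together with two auxiliary ``cap'' strands at the extremes.

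The core idea is to exploit a local nonalternating feature of $D$, namely a pair of consecutive over-crossings along some strand, which must exist because $D$ is nonalternating. Such a feature allows two adjacent over-arcs to be fused into a single vertical strand; moreover, with careful positioning, the fused strand can additionally absorb one of the cap strands at the boundary, so that a single local modification eliminates two vertical strands and brings the arc count down to $c(L)$. Primeness plays a dual role: it forbids nugatory crossings, so that the nonalternating reduction is genuine and not cancelled by trivial local modifications, and it rules out essential decomposing circles in $D$, so that the merging does not force additional cap arcs at a connect-sum interface.

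I expect the principal obstacle to be the combinatorial verification that a savings of exactly two is always attainable and not merely one. A single nonalternating feature naively absorbs only one cap strand, so one must argue that the construction can always be arranged — by choosing which cap to absorb, by finding a second disjoint nonalternating feature, or by locating a ``doubly effective'' configuration — so that two strands are eliminated simultaneously. I would expect this to proceed by a case analysis on the distribution of nonalternating features inside a prime minimal diagram, with the primeness hypothesis used decisively to rule out the configurations in which only one arc can be saved.
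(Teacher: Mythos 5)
Your treatment of the $(\Leftarrow)$ direction is correct and is what the paper intends: Morton--Beltrami gives $\alpha(L)\ge c(L)+2$ for alternating links, so $\alpha(L)\le c(L)$ forces $L$ to be nonalternating, and primeness is not used there. Your overall strategy for $(\Rightarrow)$ --- refine the Bae--Park construction on a minimal diagram so as to save two arcs, using nonalternating edges (pairs of consecutive over- or under-crossings along a strand) --- is also the strategy the paper describes. But the proposal stops exactly where the theorem begins. The entire content of the Jin--Park result is the guarantee that \emph{two} arcs can \emph{always} be saved, and you explicitly defer this (``I expect the principal obstacle to be\dots'') without supplying a mechanism. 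In the paper's formulation each saving comes from a \emph{doubly good} edge: a nonalternating edge that, together with the two most recently added edges of a good filtered spanning tree, bounds a nonalternating triangular region which can be contracted without increasing the number of spokes, lowering the conserved quantity (regions $+$ spokes) by one. One must then prove that a prime nonalternating minimal diagram always admits a good filtered spanning tree containing two such edges. This is not a routine case analysis on ``the distribution of nonalternating features'': the difficulty is that steering the tree toward a chosen nonalternating edge can create cut-points (cutting arcs or bad edges) and destroy goodness, and Lemma~\ref{lem:no cut-point}, Lemma~\ref{lem:Jin-Park-tree} and the (B1)/(B2)/(B3) trichotomy exist precisely to show these obstructions can always be detoured around while still reaching the targeted edges. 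Nothing in your proposal plays that role.

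Two secondary inaccuracies. First, you initially assert that a single nonalternating feature ``eliminates two vertical strands'' and then retract this; in the correct accounting each doubly good edge saves exactly one arc, and two distinct ones are needed --- the existence of two nonalternating edges is an easy parity fact for knot diagrams, but arranging for both to become doubly good within one good filtered tree is the hard part. Second, your description of the Bae--Park bound as ``one strand per over-arc plus two caps'' is not the construction being refined: the count $c(D)+2$ arises as the number of regions of the diagram (property~(\ref{cond:region+spoke=c+2})), and the savings are realized by contracting nonalternating triangular regions, not by fusing over-arcs with boundary caps.
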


Theorem~\ref{thm:additivity} allows us to focus on prime links.
Theorem~\ref{thm:2000, Bae-Park} and Theorem~\ref{thm:1998, Morton-Beltrami} together imply that the arc index equals the minimal crossing number plus two for nonsplit alternating links.

Theorem~\ref{thm:1998, Morton-Beltrami} and Theorem~\ref{thm:2007, Jin-Park} together imply Corollary~\ref{cor:spread+2<=alpha<=c} which leads us to conclude that, for prime nonalternating links, if the $v$-spread of the Kauffman polynomial plus two is equal to the minimal crossing number, then it is equal to the arc index.
\begin{corollary}\label{cor:spread+2<=alpha<=c}
A prime nonalternating link $L$ satisfies the inequality
\begin{equation}\label{eq:spread<=c-2}\notag
\operatorname{spr}_v(F_L(v,z))+2\le\alpha(L) \le c(L)
\end{equation}
\end{corollary}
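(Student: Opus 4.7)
The plan is simply to concatenate the two inequalities supplied by Theorem~\ref{thm:1998, Morton-Beltrami} and Theorem~\ref{thm:2007, Jin-Park}, since the corollary asserts nothing more than their conjunction under the prime nonalternating hypothesis.

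First I would invoke Theorem~\ref{thm:1998, Morton-Beltrami}, which applies to \emph{every} link without any primality or alternation assumption, to obtain the lower bound $\operatorname{spr}_v(F_L(v,z))+2\le\alpha(L)$. Then, since $L$ is assumed to be prime and nonalternating, the forward direction of Theorem~\ref{thm:2007, Jin-Park} gives $\alpha(L)\le c(L)$. Chaining these two inequalities yields the stated double inequality.

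There is really no obstacle here; the result is a one-line corollary. The only thing worth remarking on is that the primality and nonalternation hypotheses are used exclusively for the right-hand inequality, while the Kauffman spread bound on the left is completely general. No calculation with $F_L(v,z)$ or with explicit arc presentations is required at this stage.
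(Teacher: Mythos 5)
Your proof is correct and matches the paper's approach exactly: the lower bound is Theorem~\ref{thm:1998, Morton-Beltrami} (which needs no hypotheses beyond $L$ being a link), and the upper bound is the forward implication of Theorem~\ref{thm:2007, Jin-Park} applied to the prime nonalternating $L$. Your remark about which hypotheses feed which inequality is accurate and is the only substantive content here.
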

Table~\ref{tab:spread+2=xing} shows the number of prime nonalternatings knots up to 16 crossings and those satisfying both equalities in Corollary~\ref{cor:spread+2<=alpha<=c}.

\begin{table}[h]\label{tab:spread+2=xing}
\begin{tabular}{|c|c|c|}
\hline
minimal crossing & prime nonalternating  & prime nonalternating knots with \\
 number $n$& knots with $n$ crossings & $n$ crossings and $v\text{-spread}+2=n$ \\ \hline
\hphantom{0}8 & \hphantom{0,000,00}3 & \hphantom{000,00}2 \\
\hphantom{0}9 & \hphantom{0,000,00}8 & \hphantom{000,00}6  \\
           10 & \hphantom{0,000,0}42 & \hphantom{000,0}32 \\
           11 & \hphantom{0,000,}185 & \hphantom{000,}135  \\
           12 & \hphantom{0,000,}888 & \hphantom{000,}627  \\
           13 & \hphantom{0,00}5,110 & \hphantom{00}3,250  \\
           14 & \hphantom{0,0}27,436 & \hphantom{0}15,735  \\
           15 & \hphantom{0,}168,030 & \hphantom{0}83,106  \\
           16 & \hphantom{}1,008,906 & \hphantom{}423,263 \\ \hline
\end{tabular}
\smallskip
\caption{Nonalternating primes knots whose arc index is determined by Corollary~\ref{cor:spread+2<=alpha<=c}}
\end{table}

In this article, we give three conditions for diagrams of a knot or link to have the arc index smaller than the number of crossings. For each of these conditions we give a list of 13 crossing knots satisfying the condition and having the arc index~12.

\section{The Knot-spoke diagram approach due to Bae and Park}

Now we briefly describe the methods used in the proofs of Theorems \ref{thm:2000, Bae-Park} and \ref{thm:2007, Jin-Park}.

A \emph{{wheel diagram\/}} is a finite plane graph of straight edges which are incident to a single vertex~\cite{BP2000}. The projection of an arc presentation of a knot or a link into the $xy$-plane is of this shape.
For a wheel diagram with $n$ edges to represent a knot or a link, each edge is labeled with an unordered pair of distinct integers, $1,2,\ldots,n$, so that each of the integers appears exactly twice. These numbers indicate the relative $z$-levels of the end points of the corresponding arcs. Since there are only finitely many choices for labelings, there are only finitely many knots and links for each arc index.

\begin{figure}[h]
\centering
\begin{picture}(200,60)(-25,-25)
\thicklines
\put(0,0){\line(1,0){25}}
\put(0,0){\line(-1,0){25}}
\put(0,0){\line(2,3){15}}
\put(0,0){\line(2,-3){15}}
\put(0,0){\line(-2,3){15}}
\put(0,0){\line(-2,-3){15}}
\put(0,0){\circle*{5}}
\small
\put(150,0){\line(1,0){25}}   \put(177,-3){{3,5}}
\put(150,0){\line(2,3){15}}   \put(165,24){{1,4}}
\put(150,0){\line(-2,3){15}}  \put(123,24){{2,6}}
\put(150,0){\line(-1,0){25}}  \put(112,-3){{1,5}}
\put(150,0){\line(-2,-3){15}} \put(123,-29){{3,6}}
\put(150,0){\line(2,-3){15}}  \put(165,-29){{2,4}}
\put(150,0){\circle*{5}}
\end{picture}
\caption{Wheel Diagrams}
\end{figure}
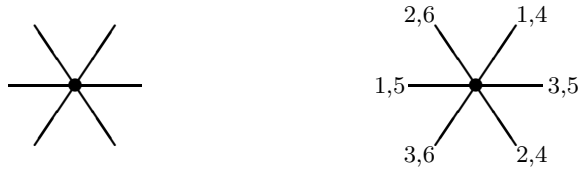

\newcounter{enumicnt}
A \emph{{knot-spoke diagram}} $D$ is a finite connected plane graph satisfying the two conditions:
\begin{enumerate}
\item There are three kinds of vertices in $D$; \emph{{a distinguished vertex\/}} $v_0$ with valency at least four, 4-valent vertices, and 1-valent vertices.
\item Every edge incident to a 1-valent vertex is also incident to $v_0$. Such an edge is called a \emph{{spoke}}.
\setcounter{enumicnt}{\theenumi}
\end{enumerate}
A wheel diagram is a knot-spoke diagram without any non-spoke edges.
A knot-spoke diagram $D$ is said to be \emph{{prime}} if no simple closed curve meeting $D$ in two interior points of edges separates multi-valent vertices into two parts.
A multi-valent vertex $v$ of a knot-spoke diagram $D$ is said to be a \emph{{cut-point}} if there is a simple closed curve $S$ meeting $D$ in the single point $v$ and separating non-spoke edges into two parts.

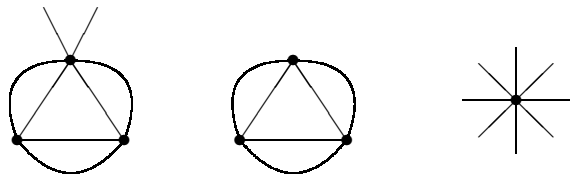
\begin{figure}[h]
{
\centering
\begin{picture}(80,70)(0,10)
\put(40,55){\circle*{4}} \put(20,25){\circle*{4}} \put(60,25){\circle*{4}}
{
\put(20,25){\line(1,0){40}} \put(20,25){\line(2,3){20}} \put(60,25){\line(-2,3){20}}
\qbezier(20,25)(40,0)(60,25) \qbezier(20,25)(9,55)(40,55) \qbezier(60,25)(71,55)(40,55)
\put(40,55){\line(-1,2){10}} \put(40,55){\line(1,2){10}}
}
\end{picture}
\begin{picture}(80,70)(0,10)
\put(40,55){\circle*{4}} \put(20,25){\circle*{4}} \put(60,25){\circle*{4}}
{
\put(20,25){\line(1,0){40}} \put(20,25){\line(2,3){20}} \put(60,25){\line(-2,3){20}}
\qbezier(20,25)(40,0)(60,25) \qbezier(20,25)(9,55)(40,55) \qbezier(60,25)(71,55)(40,55)
}
\end{picture}
\begin{picture}(80,70)(0,10)
\put(40,40){\circle*{4}}
{
\put(40,40){\line(1,0){20}}\put(40,40){\line(-1,0){20}}
\put(40,40){\line(0,1){20}}\put(40,40){\line(0,-1){20}}
\put(40,40){\line(1,1){14}}\put(40,40){\line(-1,1){14}}
\put(40,40){\line(1,-1){14}}\put(40,40){\line(-1,-1){14}}
}
\end{picture}
\caption{Knot-spoke diagrams}
}
\end{figure}
\begin{figure}[h]
{
\centering
\begin{picture}(80,70)(0,10)
\put(40,55){\circle*{4}} \put(20,25){\circle*{4}} \put(60,25){\circle*{4}}
{
\put(20,25){\line(1,0){40}} \put(20,25){\line(2,3){20}} \put(60,25){\line(-2,3){20}}
\qbezier(20,25)(40,0)(60,25) \qbezier(20,25)(9,55)(40,55) \qbezier(60,25)(71,55)(40,55)
\put(40,55){\line(-1,2){10}} \put(40,55){\line(1,2){10}}
}
\end{picture}
\begin{picture}(130,70)(5,10)
\put(40,55){\circle*{4}} \put(20,25){\circle*{4}} \put(60,25){\circle*{4}}
\put(100,25){\circle*{4}} \put(80,55){\circle*{4}} \put(120,55){\circle*{4}}
{
\put(20,25){\line(1,0){80}} \put(20,25){\line(2,3){20}} \put(60,25){\line(-2,3){20}}
\qbezier(20,25)(40,0)(60,25) \qbezier(20,25)(9,55)(40,55) 
\put(40,55){\line(-1,2){10}} \put(40,55){\line(1,2){10}}
\put(100,25){\line(-2,3){20}} \put(100,25){\line(2,3){20}} \put(40,55){\line(1,0){80}}
\qbezier(120,55)(131,26)(100,25) \qbezier(80,55)(100,80)(120,55)
}
\end{picture}
\caption{Prime diagram and non-prime diagram}
}
\end{figure}
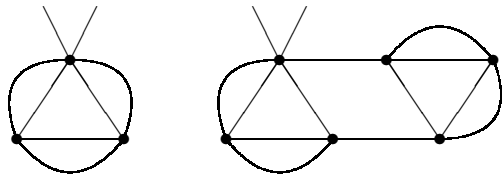
\begin{figure}[h]
{
\centering
\begin{picture}(100,70)(-50,10)
{
\qbezier(-40,40)(-20,30)(0,40) \qbezier(-40,40)(-20,50)(0,40)
\qbezier(-40,40)(-60,-5)(0,40) \qbezier(-40,40)(-60,85)(0,40)
\qbezier( 40,40)( 20,30)(0,40) \qbezier( 40,40)( 20,50)(0,40)
\qbezier( 40,40)( 60,-5)(0,40) \qbezier( 40,40)( 60,85)(0,40)
\put(0,40){\line(-2,3){20}} \put(0,40){\line( 2,3){20}}
}
\put(-40,40){\circle*{2}} \put(0,40){\circle*{4}} \put(40,40){\circle*{2}}
\put(-2,30){$v$}
\end{picture}
\caption{A cut-point}
}
\end{figure}
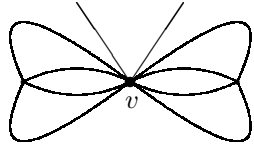

Notice that a cut-point free knot-spoke diagram with more than one non-spoke edges cannot have a loop, and that if a prime knot-spoke diagram $D$ has a cut-point, then the distinguished vertex $v_0$ must be the cut-point with valency bigger than four.

To obtain types of a knot or a link which can be projected onto a knot-spoke diagram $D$, we need to assign relative heights of the endpoints of edges of $D$ in the following way.
\begin{enumerate}
\addtocounter{enumi}{\theenumicnt}
\item At every 4-valent vertex, pairs of opposite edges meet in two distinct levels so that a knot-crossing is created.
\item If the distinguished vertex $v_0$ is incident to $2a$ non-spoke edges and $b$ spokes, then its small neighborhood  is the projection of $n=a+b$ arcs at distinct levels whose relative $z$-levels can be specified by the numbers $1,\cdots,n$. Every spoke is understood as the projection of an arc on a vertical plane whose endpoints project to $v_0$.
\setcounter{enumicnt}{\theenumi}
\end{enumerate}

\begin{figure}[h]
\centering
\begin{picture}(120,65)
\small
\put(30,30){\circle*{4}} \put(90,30){\circle*{4}}
\put(29,34){$v_0$} \put(59,34){$e$} \put(92,34){$v_1$}
\put(80,10){$\bar e$} \put(80,45){$\bar e^\prime$} \put(115,34){$e^\prime$}
{
\put(30,30){\line(-1, 2){10}}
\put(30,30){\line(-1, 1){20}}
\put(30,30){\line(-3, 2){30}}
\put(30,30){\line(-5, 1){30}}
\put(30,30){\line(-1, 0){30}}
\put(30,30){\line(-4,-1){30}}
\put(30,30){\line(-3,-2){30}}
\put(30,30){\line(-1,-1){20}}
\put(30,30){\line(-1,-2){10}}
\put(30,30){\line( 1, 0){60}}
\put(90,30){\line(1, 0){30}}
\put(90,30){\line(0, 1){30}}
\put(90,30){\line(0,-1){30}}
}
\end{picture}
\qquad
\begin{picture}(120,65)
\small
\put(30,30){\circle*{4}}
\put(29,34){$v_0$} \put(105,34){$e^\prime$}
\put(65,10){$\bar e$} \put(65,45){$\bar e^\prime$}
{
\put(30,30){\line(-1, 2){10}}
\put(30,30){\line(-1, 1){20}}
\put(30,30){\line(-3, 2){30}}
\put(30,30){\line(-5, 1){30}}
\put(30,30){\line(-1, 0){30}}
\put(30,30){\line(-4,-1){30}}
\put(30,30){\line(-3,-2){30}}
\put(30,30){\line(-1,-1){20}}
\put(30,30){\line(-1,-2){10}}
\put(30,30){\line( 1, 0){60}}
\qbezier(30,30)(85,40)(90,60)
\put(90,30){\line(1, 0){30}}
\qbezier(30,30)(85,20)(90, 0)
}
\end{picture}
\centerline{\small Local diagram of $D$ near $e$
\qquad\qquad
Local diagram of $D_e$ near $v_0$}
\caption{Contraction of an edge in $D$}\label{fig:contraction}
\end{figure}
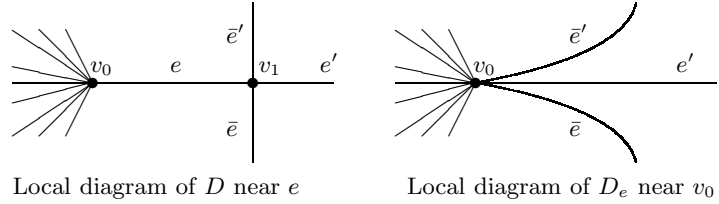

Let $e$ be an edge of a cut-point free knot-spoke diagram $D$ as in Figure~\ref{fig:contraction}. The knot-spoke diagram $D_e$ is obtained by contracting $e$ and then replacing each simple loop created from $\bar e$ or $\bar e^\prime$ by a spoke. The relative $z$-levels of the edges $e^\prime, \bar e, \bar e^\prime$ at $v_0$ in $D_e$ are easily decided by the $z$-level of $e$ at $v_0$ and the type of the crossing $v_1$ so that we do not need to keep track of the $z$-levels in detail for the proof of Theorem~\ref{thm:2000, Bae-Park}. But for the proof of Theorem~\ref{thm:2007, Jin-Park} we need to pay attention to some spokes corresponding to nonalternating edges.

\begin{lemma}[Bae-Park]\label{lem:no cut-point}
Let $D$ be a prime knot-spoke diagram without cut-points. Suppose that $D$ has at least two multi-valent vertices. Then there are at least two non-loop non-spoke edges e and f, incident to $v_0$, such that the knot-spoke diagrams $D_e$ and $D_f$ have no cut-points.
\end{lemma}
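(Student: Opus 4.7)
The strategy is to analyze, for each non-loop non-spoke edge $e=v_0v_1$ of $D$, when the contracted diagram $D_e$ can fail to be cut-point free, and then to use the planar cyclic order of non-spoke edges at $v_0$ together with primeness of $D$ to rule out bad contractions for all but at most a controlled number of choices of $e$.

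First I would reduce to the case that any new cut-point in $D_e$ lies at the merged distinguished vertex. At a 4-valent vertex $v\ne v_0,v_1$, the local combinatorial structure is unchanged by the contraction, so a separating simple closed curve through $v$ in $D_e$ would already exist in $D$, contradicting cut-point-freeness of $D$. A cut-point at the merged $v_0$ in $D_e$, on the other hand, lifts to a simple closed curve $S$ in the plane meeting $D$ only at $v_0$ and $v_1$, running alongside $e$, and separating non-spoke edges into two nonempty parts.

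Perturbing $S$ off $v_0$ and $v_1$ through the interiors of edges produces a simple closed curve meeting $D$ in two interior edge-points, which by primeness of $D$ cannot separate multi-valent vertices. Hence $S$ can create a cut-point in $D_e$ only through the loop-to-spoke replacement in the construction of $D_e$: one of the transverse edges $\bar e,\bar e'$ at $v_1$ must have its other endpoint at $v_0$, becoming a loop under contraction and being converted into a spoke. This means $e$ is \emph{bigon-bad}, i.e., it sits in a bigon between $v_0$ and $v_1$ in $D$ whose interior encloses at least one multi-valent vertex.

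Bigon-bad edges come in pairs, since if $e$ is bigon-bad with partner $\bar e$, then $\bar e$ is bigon-bad as well. If all but at most one non-loop non-spoke edge at $v_0$ were bigon-bad, then several disjoint bigons would be nested at $v_0$; such nesting, combined with cut-point-freeness of $D$ (which forces every bigon to have nontrivial multi-valent content on both sides), is incompatible with $D$ being prime. Thus at least two non-loop non-spoke edges at $v_0$ are not bigon-bad, yielding the required $e$ and $f$. The hardest step I expect is the precise bigon analysis: one must distinguish empty bigons from bigons enclosing multi-valent vertices, verify that the loop-to-spoke replacement does not compound the obstruction when both $\bar e$ and $\bar e'$ become loops, and confirm that primeness and cut-point-freeness together bound the number of obstructive bigons at $v_0$ tightly enough to leave at least two good edges.
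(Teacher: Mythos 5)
The paper does not actually prove this lemma --- it is quoted from Bae--Park \cite{BP2000} --- so your proposal must stand on its own, and it has a genuine gap at its central step. You claim that a cut-point of $D_e$ at the merged vertex forces one of $\bar e,\bar e'$ to end at $v_0$ (a bigon), and you justify this by perturbing the separating curve $S$ off the arc $v_0\cup e\cup v_1$ to obtain a curve meeting $D$ in two interior edge-points, then invoking primeness. But the perturbed curve meets $D$ once in \emph{every} edge-germ of $v_0$ and of $v_1$ lying on the side you push into; there can be arbitrarily many such germs at $v_0$, and the paper's primeness condition constrains only curves meeting $D$ in exactly two interior points, so no contradiction follows. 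The true obstruction is more general: a cut-point of $D_e$ arises whenever some face of $D$ contains both a gap at $v_0$ and the gap between $e'$ and one of $\bar e,\bar e'$ at $v_1$, with a non-spoke edge of $D_e$ trapped on each side --- i.e.\ a ``cutting arc'' from $v_1$ to $v_0$ through a single face. That this is strictly more general than a bigon is exactly why the paper's later machinery exists: the definition of a cutting arc explicitly excludes arcs forming a bigon with a single edge, and Lemma~\ref{lem:Jin-Park} together with the (B2)/(B3) cases is devoted to handling edges $e$ for which $D_e$ acquires a cut-point with no bigon present. Under your characterization all of that would be vacuous.

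Because the classification of bad edges is wrong, the concluding count also fails: the ``bigon-bad edges come in pairs, nested bigons contradict primeness'' step is only sketched, and even a correct version of it would not account for the non-bigon cutting-arc obstructions, which is where the real work of Bae--Park's argument lies (one must choose an innermost/extremal separating curve among all bad edges at $v_0$ and show the edges it isolates give good contractions). I would recommend either citing \cite{BP2000} for this lemma, as the paper does, or reworking the argument around cutting arcs rather than bigons.
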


A loop in a knot-spoke diagram is said to be \emph{{simple}\/} if the other non-spoke edges are in one side of it.
By the above lemma, the edge contractions can be performed repeatedly, without creating a cut-point, until we obtain a knot-spoke diagram with $c(D)$ spokes and only one non-spopke edge which is a non-simple loop where $c(D)$ is the number of crossings in $D$. Notice that the following three properties are preserved.
\begin{enumerate}
\addtocounter{enumi}{\theenumicnt}
\item $D$ and $D_e$ represent the same knot or link.
\item \label{cond:region+spoke=c+2} The sum of the number of regions divided by the non-spoke edges and the number of spokes is unchanged.
\item $D_e$ is prime if $D$ is prime.
\end{enumerate}
The last non-spoke edge, which is a loop, is being folded to create two extra spokes to show the inequality $\alpha(L)\le c(L)+2$ of Theorem~\ref{thm:2000, Bae-Park}.

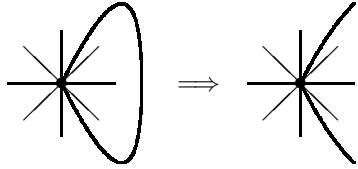
\begin{figure}[h]
\centering
\begin{picture}(50,80)(20,00)
\put(40,40){\circle*{4}}
{
\put(40,40){\line(1,0){20}}\put(40,40){\line(-1,0){20}}
\put(40,40){\line(0,1){20}}\put(40,40){\line(0,-1){20}}
\put(40,40){\line(1,1){14}}\put(40,40){\line(-1,1){14}}
\put(40,40){\line(1,-1){14}}\put(40,40){\line(-1,-1){14}}
{\thicklines\qbezier(40,40)(70,100)(70,40)\qbezier(40,40)(70,-20)(70,40)}
}
\end{picture}
\quad\raise37pt\hbox{$\Longrightarrow$}\quad
\begin{picture}(50,80)(20,00)
\put(40,40){\circle*{4}}
{
\put(40,40){\line(1,0){20}}\put(40,40){\line(-1,0){20}}
\put(40,40){\line(0,1){20}}\put(40,40){\line(0,-1){20}}
\put(40,40){\line(1,1){14}}\put(40,40){\line(-1,1){14}}
\put(40,40){\line(1,-1){14}}\put(40,40){\line(-1,-1){14}}
{\thicklines
\qbezier(40,40)(50,60)(60,70)\qbezier(40,40)(50,20)(60,10)
}
}
\end{picture}
\caption{Folding the last non-spoke edge}
\end{figure}

In the case of nonalternating diagrams, there are at least two \emph{removable\/} spokes so that the inequality of Theorem~\ref{thm:2007, Jin-Park} can be proved. The edges to be contracted must be chosen carefully to make nonalternating edges into removable spokes. Therefore a more elaborate method than Lemma~\ref{lem:no cut-point} is needed to avoid cut-point. The following lemma plays an important role for this purpose.

\begin{lemma}[Jin-Park]\label{lem:Jin-Park}
Let $D$ be a prime cut-point free knot-spoke diagram and let $e$ be an edge incident to $v_0$ and to another $4$-valent vertex $v_1$ such that $D_e$ has a cut-point. Then there exists a simple closed curve $S_e$ satisfying the following conditions.
\begin{enumerate}
\item $D_e \cap S_e = v_0$
\item $S_e$ separates $\bar e$ and $\bar e^\prime$ where the four edges incident to $v_1$ in $D$ are labeled with $e, \bar e, e^\prime, \bar e^\prime$ as in Figure~\ref{fig:contraction}.
\item $S_e$ separates $D_e$ into two knot-spoke diagrams $\bar D$ and $\bar D^\prime$ containing $\bar e$ and $\bar e^\prime$, respectively. Futhermore $\bar D^\prime$ is prime and cut-point free, and there is a sequence of non-spoke edges $e_1, e_2, \cdots, e_k$ of $D$ not contained in $\bar D^\prime$ such that the knot-spoke diagram $D_{{e_1}{e_2}{\cdots}{e_k}}$ is identical with $\bar D^\prime$ on non-spoke edges in one side of $S_e$ and has only spokes in the other side.
\end{enumerate}
\end{lemma}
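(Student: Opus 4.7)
The plan is to identify the cut-point of $D_e$ as necessarily $v_0$, choose a cut-point witness with minimal content on the side not containing $\bar e$, and then verify the three conditions by combining the primality and cut-point freeness of $D$ with this minimality. Since edge contraction preserves primality, $D_e$ is prime, and so by the remark in the excerpt (that in a prime knot-spoke diagram only the distinguished vertex can be a cut-point) every cut-point witness of $D_e$ passes through $v_0$. Among all simple closed curves $S$ with $D_e \cap S = \{v_0\}$ separating the non-spoke edges of $D_e$ into two nonempty parts, pick $S_e$ minimizing the number of non-spoke edges on the side away from $\bar e$; call this side $\bar D'$ and the other side $\bar D$. Condition~(1) then holds by construction.

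For condition~(2), suppose $\bar e$ and $\bar e'$ were on the same side of $S_e$. The three edges $\bar e, e', \bar e'$ inherited from the 4-valent vertex $v_1$ appear consecutively in the cyclic order at $v_0$ in $D_e$, with $e'$ strictly between $\bar e$ and $\bar e'$; hence $e'$ would lie on that same side. Re-inserting $v_1$ and the contracted edge $e$ on that side recovers $D$, and $S_e$ still meets $D$ only at $v_0$ and still separates its non-spoke edges into two nonempty parts (the other side is unchanged, and $e, e'$ are non-spoke edges on the $v_1$ side). Thus $v_0$ would be a cut-point of $D$, contradicting the hypothesis.

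For condition~(3), cut-point freeness of $\bar D'$ follows from minimality: any cut-point witness $S'$ for $\bar D'$ through $v_0$ is also a cut-point witness for $D_e$, so by the argument of (2) applied to $S'$ it must separate $\bar e$ and $\bar e'$, while its $\bar D'$ side is strictly smaller than that of $S_e$, contradicting the choice of $S_e$. Primality of $\bar D'$ follows from primality of $D_e$: a curve $T$ meeting $\bar D'$ in two interior edge points and separating its multi-valent vertices can, by an innermost-arc argument, be replaced by a simple closed curve sitting inside the closed disk bounded by $S_e$ on the $\bar D'$ side, and this new curve also separates the multi-valent vertices of $D_e$ (all vertices of $\bar D$ lie on one side), contradicting primality of $D_e$. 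For the contraction sequence, apply Lemma~\ref{lem:no cut-point} iteratively to $D$, each time selecting a non-spoke edge on the $\bar D$ side incident to $v_0$ whose contraction preserves cut-point freeness; since no edge on the $\bar D'$ side is ever touched, $\bar D'$ remains intact, and the procedure terminates with only spokes on the $\bar D$ side (folding any final non-simple loop at $v_0$ as in the proof of Theorem~\ref{thm:2000, Bae-Park}).

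The main obstacle I anticipate is the innermost-arc reduction in the primality argument for $\bar D'$: the witness $T$ need not originally lie inside the $S_e$-disk, and the replacement must still meet $\bar D'$ in exactly two interior edge points while preserving the separation of multi-valent vertices. A secondary technicality is that Lemma~\ref{lem:no cut-point} only guarantees \emph{some} pair of contractible edges exists; one must refine that argument to ensure that at every stage a contractible edge can be selected on the $\bar D$ side, which is precisely where the Jin--Park strengthening of Bae--Park is needed.
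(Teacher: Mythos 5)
The paper does not actually prove this lemma: it cites the original source, \cite[Proposition 8]{Jin-Park2007}, so there is no in-paper argument to compare against. Evaluating your proposal on its own terms, the overall strategy (use primality of $D_e$ to place the cut-point at $v_0$, minimize the side away from $\bar e$, then verify the three conditions) is a sensible route, but the argument for condition~(2) has a genuine logical gap.

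You claim that if $\bar e$ and $\bar e'$ lie on the same side of $S_e$, then because $e'$ sits between them in the cyclic order at $v_0$, it ``would lie on that same side.'' This does not follow. A simple closed curve through $v_0$ cuts the cyclic order of edges at $v_0$ at exactly two places, and those two cut points may both fall inside the segment $\bar e, e', \bar e'$ --- namely between $\bar e$ and $e'$, and between $e'$ and $\bar e'$. In that configuration $\bar e$ and $\bar e'$ are together and $e'$ is alone on the other side, with no contradiction to consecutiveness. Moreover, your minimality criterion (fewest non-spoke edges on the side away from $\bar e$) does nothing to exclude this: the isolated-$e'$ side could well be the smallest one. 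In this case your re-insertion argument also breaks down, because $S_e$, whose two arms now emerge from $v_0$ on either side of $e'$, would cross the re-inserted edge $e$, so it is not a simple closed curve meeting $D$ only at $v_0$. The case can still be ruled out, but by a different observation: when the arms of $S_e$ emerge in the two slots flanking $e'$, the un-contracted curve can be routed through $v_1$ (not $v_0$), whence $v_1$ would be a cut-point of $D$, contradicting the hypothesis that $D$ is cut-point free. You need to add this case explicitly; as written, the derivation of~(2) is incomplete.

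The two difficulties you flag at the end are also real. The innermost-arc reduction for primality of $\bar D'$ can be made to work (push the outside arcs of a putative primality-violating curve $T$ across $S_e$, choosing arcs of $S_e$ that avoid $v_0$, so that the resulting curve stays in the disk, still meets $\bar D'$ in the same two interior edge points, and leaves all of $\bar D$ on one side); but this must be spelled out, since as you note $T$ need not start inside the disk. More seriously, Lemma~\ref{lem:no cut-point} only produces \emph{some} pair of good edges at each step, and your plan of always contracting on the $\bar D$ side is exactly the content you are trying to establish, so invoking it there is circular as stated. Closing that gap requires showing that the edges incident to $v_0$ on the $\bar D$ side themselves form a prime cut-point-free knot-spoke subdiagram (or an explicit argument that at least one of the two edges promised by Lemma~\ref{lem:no cut-point} can always be taken on the $\bar D$ side, possibly after taking $e$ as the last edge of the sequence, as the paper's Remark following Lemma~\ref{lem:Jin-Park-tree} suggests). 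As it stands the proposal is an honest sketch of a plausible strategy but not a complete proof.
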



\section{Filtered Spanning Trees}

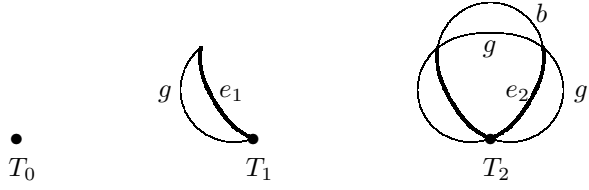
\begin{figure}[t]
\centering
\setlength{\unitlength}{0.05cm}
\newcommand{\mythicklines}{\linethickness {0.30mm}}
\newcommand{\mythinlines}{\linethickness {0.05mm}}
\begin{picture}(60,40)(-30,-20)
\put(0.,-16.){\circle*{3}}
\put(-2,-26){$T_0$}
\end{picture}
\begin{picture}(60,40)(-30,-20)
\mythinlines
\qbezier(0.,-16.)(-4.717259301,-17.76897224)(-9.414213562,-16.30589621)
\qbezier(-9.414213562,-16.30589621)(-14.61513958,-14.68583569)(-17.32050808,-10.)
\qbezier(-17.32050808,-10.)(-20.02587658,-5.314164317)(-18.82842712,0.)
\qbezier(-18.82842712,0.)(-17.74701097,4.799219772)(-13.85640646,8.)
\mythicklines
\qbezier(0.,-16.)(-3.676759433,-14.62121521)(-6.585786438,-11.40691672)
\qbezier(-6.585786438,-11.40691672)(-8.477262958,-9.316949831)(-10.39230485,-6.)
\qbezier(-10.39230485,-6.)(-12.30734672,-2.683050212)(-13.17157288,0.)
\qbezier(-13.17157288,0.)(-14.50072354,4.126440629)(-13.85640646,8.)
\put(0.,-16.){\circle*{3}}
\put(-25,-5){$g$}
\put(-9,-5){$e_1$}
\put(-2,-26){$T_1$}
\end{picture}
\begin{picture}(60,40)(-30,-20)
\mythinlines
\qbezier(-13.85640646,8.)(-13.02975166,12.96975254)(-9.414213562,16.30589621)
\qbezier(-9.414213562,16.30589621)(-5.410736959,20.00000003)(0.,20.)
\qbezier(0.,20.)(5.410737006,20.)(9.414213562,16.30589621)
\qbezier(9.414213562,16.30589621)(13.02975171,12.96975249)(13.85640646,8.)
\mythicklines
\qbezier(13.85640646,8.)(14.50072352,4.126440545)(13.17157288,0.)
\qbezier(13.17157288,0.)(12.30734671,-2.683050203)(10.39230485,-6.)
\qbezier(10.39230485,-6.)(8.477262982,-9.316949809)(6.585786438,-11.40691672)
\qbezier(6.585786438,-11.40691672)(3.676759418,-14.62121522)(0.,-16.)
\mythinlines
\qbezier(0.,-16.)(-4.717259301,-17.76897224)(-9.414213562,-16.30589621)
\qbezier(-9.414213562,-16.30589621)(-14.61513958,-14.68583569)(-17.32050808,-10.)
\qbezier(-17.32050808,-10.)(-20.02587658,-5.314164317)(-18.82842712,0.)
\qbezier(-18.82842712,0.)(-17.74701097,4.799219772)(-13.85640646,8.)
\qbezier(-13.85640646,8.)(-10.82396409,10.49477467)(-6.585786438,11.40691672)
\qbezier(-6.585786438,11.40691672)(-3.830083749,12.00000000)(0.,12.)
\qbezier(0.,12.)(3.830083757,12.00000000)(6.585786438,11.40691672)
\qbezier(6.585786438,11.40691672)(10.82396411,10.49477466)(13.85640646,8.)
\qbezier(13.85640646,8.)(17.74701098,4.799219749)(18.82842712,0.)
\qbezier(18.82842712,0.)(20.02587658,-5.314164357)(17.32050808,-10.)
\qbezier(17.32050808,-10.)(14.61513954,-14.68583572)(9.414213562,-16.30589621)
\qbezier(9.414213562,-16.30589621)(4.717259271,-17.76897223)(0.,-16.)
\mythicklines
\qbezier(0.,-16.)(-3.676759433,-14.62121521)(-6.585786438,-11.40691672)
\qbezier(-6.585786438,-11.40691672)(-8.477262958,-9.316949831)(-10.39230485,-6.)
\qbezier(-10.39230485,-6.)(-12.30734672,-2.683050212)(-13.17157288,0.)
\qbezier(-13.17157288,0.)(-14.50072354,4.126440629)(-13.85640646,8.)
\put(0.,-16.){\circle*{3}}
\put(-2,7){$g$}
\put(4,-5){$e_2$}
\put(22,-5){$g$}
\put(12,15){$b$}
\put(-2,-26){$T_2$}
\end{picture}
\caption{A filtered spanning tree and its successive closures}\label{fig:filtered trefoil diagram}
\end{figure}

\begin{figure}[t]
\centering
\begin{picture}(60,60)(0,7)
{\thicklines\put(0,53){\line(1,0){60}}
}
\put(33,11){\line(-1,2){19}} \qbezier(10,57)(8,61)(6,65)
\put(27,11){\line( 1,2){19}} \qbezier(50,57)(52,61)(54,65)
\put(25,17){\line(1,0){10}}
\put(12,53){\circle*{4}} \put(48,53){\circle*{4}} \put(30,17){\circle*{4}}\put(28,5){$v_0$}
\end{picture}%
\qquad\qquad
\begin{picture}(60,60)(0,7)
{\thicklines\qbezier(0,53)(15,40)(30,17)\qbezier(30,17)(45,40)(60,53)}
\put(33,11){\line(-1,2){27}}
\put(27,11){\line( 1,2){27}}
\put(25,17){\line(1,0){10}}
\put(30,17){\circle*{4}}\put(28,5){$v_0$}
\end{picture}
\caption{Contraction of a triangular region using a doubly-good edge}\label{fig:doubly-good}
\end{figure}
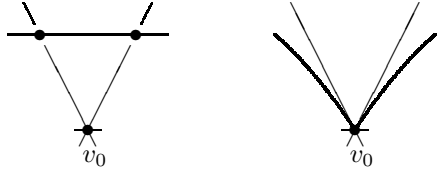

Instead of collapsing edges of a diagram $D$ in sequence to obtain a wheel diagram, we consider the tree in $D$ consisting of the edges to be contracted. With this new approach, we describe the method used in the proof of Theorem~\ref{thm:2007, Jin-Park}.

Let $D$ be a knot diagram. We may consider $D$ as a connected $4$-valent plane graph with $c(D)$ vertices and $2\,c(D)$ edges.
A \emph{filtered tree\/} of $D$ is an increasing sequence
$T_0\subset T_1\subset T_2\subset\cdots\subset T_k$
of subgraphs of $D$ such that each $T_i$ is a tree containing $i$ edges. The edges of $T_k$ are ordered by the filtering.
On the other hand, if the edges of a tree $T$ are ordered so that each of their successive unions is connected, the ordering gives rise to a filtered tree structure on $T$. If a tree $T$ is prescribed with such an ordering we can consider $T$ as a filtered tree.

The \emph{{closure}} of $T_i$, denoted by $\overline T_i$, is the subgraph of $D$ obtained from $T_i$ by adding the edges which are incident to $T_i$ at both ends.
An edge $f$ of $\overline T_i\setminus T_{i}\subset D$ is said to be \emph{good} if it meets the edge $e_i=T_i\setminus T_{i-1}$ transversely at the vertex not contained in $T_{i-1}$.
An edge $f$ of $\overline T_i\setminus T_{i}\subset D$ is said to be \emph{bad} if it is an extension of the edge $e_i$ at the vertex not contained in $T_{i-1}$. In Figure~\ref{fig:filtered trefoil diagram}, good edges and bad edges are labeled with the letters $g$ and $b$, respectively.

Let $T_0\subset T_1\subset\cdots\subset T_i=T$  be a filtered tree in a diagram $D$ which does not
span $D$.
A simple arc $\Gamma$, which does not form a bigon together with a single edge of $D$, is called a \emph{cutting arc\/} of $T$ if $\Gamma\cap D$ consists of two
distinct vertices $p\in T_i\setminus T_{i-1}$ and $c\in T_{i-1}$ such that the simple closed curve in
$\Gamma\cup T_i$ separates edges of $D\setminus\overline T_i$ into two parts.
We say that the filtered tree $T$ is \emph{good\/} if, for each $j\le i$, the subtree $T_0\subset T_1\subset\cdots\subset T_j$ of $T$ has no cutting arc and $\overline T_j$ has no bad edge.

If a filtered tree of $D$ terminates with a spanning tree of $D$, we call it a \emph{filtered spanning tree\/} of $D$. As every spanning tree of $D$ has $c(D)-1$ edges, a filtered spanning tree of $D$ is of the form $T_0\subset T_1\subset T_2\subset\cdots\subset T_{c(D)-1}$.
A filtered spanning tree $T_0\subset T_1\subset T_2\subset\cdots\subset T_{c(D)-1}$ is said to be \emph{good\/} if
each $T_i$ is good for $i<c(D)-1$ and if $T_{c(D)-1}$ has no cutting arc. Notice that $\overline T_{c(D)-1}$ has a bad edge.

We rephraise the statement of Theorem~\ref{thm:2000, Bae-Park} in the following way.

\begin{theorem}[Theorem~\ref{thm:2000, Bae-Park} rephrased]
A prime link diagram $D$ admits a good filtered spanning tree and therefore one can obtain an arc presentation with $c(D)+2$ arcs.
\end{theorem}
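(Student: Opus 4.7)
The plan is to derive this rephrased statement by translating the edge-contraction argument of Theorem~\ref{thm:2000, Bae-Park} into the filtered-tree language. Fix a vertex $v_0$ of $D$ as the root and set $T_0=\{v_0\}$. At each stage $i$, let $D/T_i$ denote the knot-spoke diagram obtained by contracting every edge of $T_i$ to the single vertex $v_0$ and replacing each resulting simple loop by a spoke. The primality of $D$ translates to primality and cut-point-freeness of $D/T_0$. A direct comparison of definitions provides the dictionary: $T_i$ admits a cutting arc if and only if $v_0$ is a cut-point of $D/T_i$, and $\overline T_i$ has a bad edge if and only if the strand continuation of the most recently added edge $e_i$ at its newly adjoined endpoint already terminates inside $T_i$, i.e.\ produces a loop at $v_0$ in $D/T_i$.

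With this dictionary the induction step rewrites as follows. Suppose $T_0\subset\cdots\subset T_i$ is a good filtered tree and $D/T_i$ is prime and cut-point-free with more than one multi-valent vertex. Lemma~\ref{lem:no cut-point} supplies at least two non-loop non-spoke edges at $v_0$ of $D/T_i$ whose contractions preserve both primality and cut-point-freeness; each such edge lifts uniquely to an edge of $D$ joining $T_i$ to its complement. From the (at least two) candidates, select $e_{i+1}$ whose opposite strand at the far endpoint does not already land in $T_i$, so that no bad edge is created in $\overline T_{i+1}$. Setting $T_{i+1}=T_i\cup\{e_{i+1}\}$ then preserves every invariant in both pictures.

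Iterating yields a filtered spanning tree $T_0\subset\cdots\subset T_{c(D)-1}$ of $D$ in which each $T_i$ with $i<c(D)-1$ has neither a cutting arc nor a bad edge in its closure, while $T_{c(D)-1}$ has no cutting arc, hence is good in the required sense. The collapse $D/T_{c(D)-1}$ has $v_0$ as its only multi-valent vertex; the $2c(D)-(c(D)-1)=c(D)+1$ chords of the spanning tree have all become loops at $v_0$, of which $c(D)$ are simple and hence become spokes and one is the unavoidable bad-edge loop. Folding this last loop into two spokes in the standard Bae--Park fashion yields a wheel diagram with $c(D)+2$ spokes, which is precisely the projection of an arc presentation of $D$ with $c(D)+2$ arcs.

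The main obstacle lies in the selection step in the second paragraph: Lemma~\ref{lem:no cut-point} always produces two contraction candidates, but does not by itself guarantee that either candidate avoids creating a bad edge. The genuine work, and the reason the rephrasing is not merely notational, is to show using primality and a local analysis at the far 4-valent endpoint of each candidate edge that at least one candidate can always be chosen whose strand continuation points out of the current tree; this is where a careful proof must concentrate.
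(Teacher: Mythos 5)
Your dictionary between the two frameworks and your final counting are correct, and this is exactly the translation the paper intends: repeatedly apply the Bae--Park lemma, then fold the final loop. However, the gap you flag at the end is not actually a gap, and the reason is already in the text. Observe what a bad edge means in the contracted picture: if $f$ is a bad edge of $\overline{T_{m}\cup e}$, then $f$ is the strand opposite $e$ at the new vertex $p$, and its other end lies in $T_m$; hence in $D/T_m$ both $e$ and $f$ run from $v_0$ to $p$, and after contracting $e$ the edge $f$ becomes a \emph{loop} at $v_0$ in $D_e$. Now invoke the remark in Section 2 that a cut-point free knot-spoke diagram with more than one non-spoke edge cannot have a loop. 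For $m < c(D)-2$ the diagram $D_e$ still contains at least one $4$-valent vertex $\neq v_0$, hence more than one non-spoke edge, so a loop at $v_0$ would force $v_0$ to be a cut-point. Since Lemma~\ref{lem:no cut-point} explicitly provides candidates $e$, $f$ for which $D_e$ and $D_f$ are cut-point free, those candidates \emph{automatically} avoid bad edges; no extra selection argument is needed. This is precisely why the paper can state Lemma~\ref{lem:Bae-Park-tree} (two \emph{good} extensions exist) as a mere rephrasing of Lemma~\ref{lem:no cut-point}.

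A second, smaller inaccuracy: in your final paragraph you assert that of the $c(D)+1$ chords of the spanning tree, ``$c(D)$ are simple and hence become spokes.'' That classification is not literal; rather, along the way the simple loops arising from $\bar e$ and $\bar e'$ at each step are absorbed as spokes, and the invariant (regions $+$ spokes $= c(D)+2$) together with the absence of bad edges at intermediate steps forces exactly one non-simple loop to survive at the end. The numerical conclusion ($c(D)$ spokes plus one non-simple loop, then fold to $c(D)+2$ arcs) is correct, but it follows from the conserved quantity rather than from a direct simple-versus-nonsimple count of chords.
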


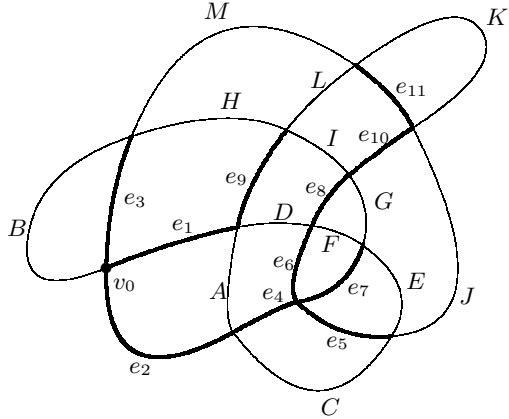
\begin{figure}[b]
\centering
\setlength{\unitlength}{0.07cm}
\begin{picture}(100,85)(-50,-35)
\small
\linethickness{0.2pt}
\qbezier(-44,2)(-42,15)(-24.57894736, 20.80701754)
\qbezier(-24.57894736, 20.80701754)(-6,27)(4.514563108, 22.01941748)
\put(-8,26){$H$}
\qbezier(4.514563108, 22.01941748)(13,18)(16.27461140, 13.49740932)
\put(12,19){$I$}
\qbezier(16.27461140, 13.49740932)(21,7)(18.86440678, .166101694)
\put(21,7){$G$}
\linethickness{1pt}
\qbezier(18.86440678, .166101694)(16,-9)(6.571428572, -10.57142857)
\put(16,-9){$e_7$}
\qbezier(6.571428572, -10.57142857)(4,-11)(-5.489677953, -16.30305533)
\put(0,-10){$e_4$}
\qbezier(-5.489677953, -16.30305533)(-30,-30)(-29.35278745, -4.111498254)
\put(-25,-24){$e_2$}
\qbezier(-29.35278745, -4.111498254)(-29,10)(-24.57894736, 20.80701754)
\put(-26,8){$e_3$}
\linethickness{0.2pt}
\qbezier(-24.57894736, 20.80701754)(-11,54)(17.43870968, 34.25089605)
\put(-11,43){$M$}
\linethickness{1pt}
\qbezier(17.43870968, 34.25089605)(25,29)(28.27868852, 22.44262295)
\put(25,29){$e_{11}$}
\linethickness{0.2pt}
\qbezier(28.27868852, 22.44262295)(47,-15)(24.25723472, -17.06752412)
\put(37,-11){$J$}
\linethickness{1pt}
\qbezier(24.25723472, -17.06752412)(14,-18)(6.571428572, -10.57142857)
\put(12,-19){$e_5$}
\qbezier(6.571428572, -10.57142857)(4,-8)(9.280000000, 3.880000000)
\put(2,-4){$e_6$}
\qbezier(9.280000000, 3.880000000)(12,10)(16.27461140, 13.49740932)
\put(8,10){$e_8$}
\linethickness{1pt}
\qbezier(16.27461140, 13.49740932)(23,19)(28.27868852, 22.44262295)
\put(18,20){$e_{10}$}
\linethickness{0.2pt}
\qbezier(28.27868852, 22.44262295)(46,34)(41,41)
\put(42,42){$K$}
\qbezier(41,41)(36,48)(17.43870968, 34.25089605)
\qbezier(17.43870968, 34.25089605)(9,28)(4.514563108, 22.01941748)
\put(9,30){$L$}
\linethickness{1pt}
\qbezier(4.514563108, 22.01941748)(-3,12)(-4.681318681, 3.593406592)
\put(-7,12){$e_{9}$}
\linethickness{0.2pt}
\qbezier(-4.681318681, 3.593406592)(-8,-13)(-5.489677953, -16.30305533)
\put(-10,-10){$A$}
\qbezier(-5.489677953, -16.30305533)(11,-38)(24.25723472, -17.06752412)
\put(11,-32){$C$}
\qbezier(24.25723472, -17.06752412)(30,-8)(18.86440678, .166101694)
\put(27,-8){$E$}
\qbezier(18.86440678, .166101694)(15,3)(9.280000000, 3.880000000)
\put(11,-1){$F$}
\qbezier(9.280000000, 3.880000000)(2,5)(-4.681318681, 3.593406592)
\put(2,5){$D$}
\linethickness{1pt}
\qbezier(-4.681318681, 3.593406592)(-17,1)(-29.35278745, -4.111498254)
\put(-17,3){$e_1$}
\linethickness{0.2pt}
\qbezier(-29.35278745, -4.111498254)(-46,-11)(-44,2)
\put(-29.35278745, -4.111498254){\circle*{2}}
\put(-28,-8){$v_0$}
\put(-48,2){$B$}
\end{picture}
\caption{Good edges and a bad edge}
\label{fig:good-tree}
\end{figure}

A good edge $e\subset\overline T_i\setminus\overline T_{i-1}$ is said to be \emph{doubly good\/}
if it is a nonalternating edge and the simple closed curve in $T_i\cup e$ has only good edges of $T_j$, $j\le i$, in one side.
A doubly good edge $e\subset\overline T_i\setminus\overline T_{i-1}$ and the two edges $e_i=T_i\setminus T_{i-1}$, and $e_{i-1}=T_{i-1}\setminus T_{i-2}$ together bound a nonalternating triangular region in $D/T_{i-2}$, as shown in Figure~\ref{fig:doubly-good}, which can be contracted to reduce the number of regions by one without increasing the number of spokes. Thus the existence of one doubly good edge leads to an arc presentation with one less arcs than the process described in the property (\ref{cond:region+spoke=c+2}) of page~\pageref{cond:region+spoke=c+2}.

\begin{theorem}[Theorem~\ref{thm:2007, Jin-Park} rephrased]
A prime nonalternating diagram $D$ of a link has a good filtered spanning tree which has at least two doubly good edges so that $D$ has an arc presentation with $c(D)$ arcs.
\end{theorem}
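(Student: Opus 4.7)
The plan is to refine the inductive construction behind the rephrased Theorem~\ref{thm:2000, Bae-Park} so that, in the nonalternating setting, at least two edges of the successive closures can simultaneously be made doubly good. Since each doubly good edge provides a nonalternating triangular face that contracts as in Figure~\ref{fig:doubly-good}, removing one region from the invariant ``regions plus spokes'' without adding a spoke, two such edges will bring the arc count from $c(D)+2$ down to $c(D)$.

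First I would run the inductive construction of a good filtered spanning tree $T_0\subset T_1\subset\cdots\subset T_{c(D)-1}$ as supplied by the rephrased Bae--Park theorem, but with an additional selection rule at each stage: among the edges incident to $T_{i-1}$ that keep $T_i$ good, prefer those for which some nonalternating edge of $D$ appears as a good edge in $\overline T_i\setminus T_i$ and cobounds a triangular face together with $e_{i-1}$ and $e_i$. Since $D$ is nonalternating and prime, this preference can eventually be satisfied in the neighborhood of every vertex carrying a nonalternating incidence.

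Second, I would invoke Lemma~\ref{lem:Jin-Park} to dispose of cut-point obstructions that may arise from this preferred choice. Whenever the favored edge $e_i$ would create a cutting arc (equivalently, a cut-point in the associated knot-spoke diagram after contraction), Lemma~\ref{lem:Jin-Park} supplies a simple closed curve $S_{e_i}$ together with a sequence of non-spoke edges whose successive absorption reproduces, on one side of $S_{e_i}$, the prime cut-point free subdiagram $\bar D^\prime$. Translating this reroute back to the filtered tree, we absorb the edges prescribed by Lemma~\ref{lem:Jin-Park} before committing to the nonalternating triangle, thereby preserving the candidate nonalternating edge throughout the detour.

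Third, to obtain two doubly good edges rather than one, I would split the argument into an early-phase capture and a late-phase capture. Early in the filtration one nonalternating edge is caught in a triangular face by the scheme above, yielding the first doubly good edge. Near the end, the bad edge inside $\overline T_{c(D)-1}$ corresponds to the non-simple loop that is folded in the Bae--Park construction, and in the nonalternating case this loop can be arranged to be flanked by a second nonalternating good edge, yielding the second doubly good edge. The main obstacle will be verifying that these two choices are compatible: namely, that the triangular face containing the second nonalternating edge can still be arranged so that only good edges of earlier $T_j$'s lie on one side of the associated simple closed curve, once the rerouting forced by the first doubly good edge has been performed. This compatibility ultimately reduces to a case analysis of how the two nonalternating edges are distributed between the subdiagrams $\bar D$ and $\bar D^\prime$ of Lemma~\ref{lem:Jin-Park}, mirroring the analysis in the original proof of Theorem~\ref{thm:2007, Jin-Park}.
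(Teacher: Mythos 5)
Your overall strategy coincides with the one the paper describes: build a good filtered spanning tree while steering the edge choices so that nonalternating edges land in contractible triangular faces (doubly good edges), use Lemma~\ref{lem:Jin-Park} (equivalently Lemma~\ref{lem:Jin-Park-tree}) to reroute around cutting-arc obstructions, and observe that each doubly good edge lowers the region-plus-spoke count of property~(\ref{cond:region+spoke=c+2}) by one, so two of them turn $c(D)+2$ into $c(D)$. Note that the paper itself does not prove this rephrased theorem; it only sketches the method and defers the argument to the original proof of Theorem~\ref{thm:2007, Jin-Park} in \cite{Jin-Park2007}, so your proposal is being compared against a sketch rather than a complete proof.

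That said, your third step --- the actual content of the theorem --- is asserted rather than argued, and this is a genuine gap. You need, at minimum: (a) a reason why a nonalternating diagram supplies \emph{two} candidate nonalternating edges (this follows from the fact that, traversing the link, the number of over--over edges equals the number of under--under edges, so a nonalternating diagram has at least one of each); (b) an argument that each candidate can be placed as a good edge $e\subset\overline T_i\setminus\overline T_{i-1}$ closing a triangle with $e_{i-1}$ and $e_i$ \emph{and} satisfying the side condition that only good edges of earlier stages lie on one side of the cycle in $T_i\cup e$; and (c) a verification that the rerouting forced by Lemma~\ref{lem:Jin-Park-tree} when a (B1)/(B2)/(B3) obstruction arises does not destroy the first doubly good edge while securing the second. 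Your ``early-phase/late-phase'' dichotomy, in particular the claim that the terminal non-simple loop ``can be arranged to be flanked by a second nonalternating good edge,'' is exactly the kind of statement that requires the case analysis you postpone; as written it could fail if, for example, both nonalternating edges lie in the subdiagram $\bar D$ that Lemma~\ref{lem:Jin-Park-tree} discards into spokes. Explicitly invoking the Remark after Lemma~\ref{lem:Jin-Park-tree} (that for a (B3)-extension the rerouting sequence can be chosen to end with the intended edge) and the mechanism of Lemma~\ref{lem:d-good-2,3} and Corollary~\ref{cor:d-good-2,3} would be the natural way to close this, as the paper does in the proofs of its main theorems.
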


In Figure~\ref{fig:good-tree}, the sequence $e_1,e_2,\ldots,e_{11}$ gives rise to a filtered spanning tree $T_i=v_0\cup e_1\cup\cdots\cup e_i$, $i=0,\ldots,11$. The edges $A$ through $L$ are good and the edge $M$ is bad.
The three edges $A$, $F$ and $I$ are doubly good if they are nonalternating.

\medskip

The following proposition is immediate from the definition of good filtered trees.
\begin{proposition}\label{prop:cut-point}
Let $T_0\subset T_1\subset\cdots\subset T_m$ be a non-spanning good filtered tree in a prime diagram $D$.
Let $e$ be an edge in $D$ such that $T_m \cap e$ is a single vertex, so that $T_m \cup e$ is a tree.
If  $T_0\subset T_1\subset\cdots\subset T_m\subset (T_m\cup e)$ is not a good filtered tree, then $\overline {T_m \cup e}$ has a bad edge or a sufficiently small neighborhood of\/ $\overline {T_{m} \cup e}$ has disconnected exterior in $D$.
\end{proposition}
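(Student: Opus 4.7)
The plan is to unpack the definition of ``good filtered tree'' and isolate the precise level at which the extended filtration can fail. The conditions that define goodness at each level $j$, namely that the filtered subtree $T_0 \subset \cdots \subset T_j$ admits no cutting arc and that $\overline{T_j}$ contains no bad edge, hold for every $j \le m$ by hypothesis, and they continue to hold after $e$ is appended. Hence $T_0 \subset \cdots \subset T_m \subset (T_m \cup e)$ fails to be good exactly when, at the new top level $m+1$, either $\overline{T_m \cup e}$ has a bad edge or the extended filtration admits a cutting arc. The first case is the first alternative of the desired conclusion, so the substantive step is to deduce, from the existence of a cutting arc, that a sufficiently small regular neighborhood of $\overline{T_m \cup e}$ has disconnected exterior in $D$.

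Assume then that $\Gamma$ is a cutting arc with endpoints $p \in (T_m \cup e) \setminus T_m$ and $c \in T_m$, and let $P$ be the unique path in $T_m \cup e$ from $p$ to $c$. By definition of cutting arc, the simple closed curve $S = \Gamma \cup P$ separates the non-closure edges, i.e., the edges of $D \setminus \overline{T_m \cup e}$, into two nonempty classes $X_1$ and $X_2$. Let $N$ be a sufficiently small regular neighborhood of $\overline{T_m \cup e}$ in the plane. Since the vertices of $\overline{T_m \cup e}$ are exactly the vertices of $T_m \cup e$, the set $D \setminus N$ consists of the vertices of $D$ lying outside $T_m \cup e$, together with the interiors of the non-closure edges (each truncated near any endpoint in $T_m \cup e$).

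The key point is that no walk in $D \setminus N$ can cross $S$. Indeed, the interior of $\Gamma$ lies inside a single face of $D$ and meets no edge of $D$ at all; the interior of $P$ is a union of interiors of tree edges, disjoint from every non-closure edge; and every vertex of $D$ lying on $S$ lies in $T_m \cup e$, hence in $N$, and is therefore missed by any walk in $D \setminus N$. Consequently, a hypothetical path in $D \setminus N$ from $X_1$ to $X_2$ would have nowhere to cross $S$, a contradiction. Thus $X_1$ and $X_2$ lie in distinct connected components of $D \setminus N$, and the exterior is disconnected, as required. The only delicate point is really just the bookkeeping of these three possible crossing locations for an exterior walk; once each of them is ruled out, the geometric conclusion follows at once.
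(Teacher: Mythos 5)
The paper offers no proof here, asserting only that the proposition is ``immediate from the definition of good filtered trees''; your argument supplies the verification that this phrase elides, and it is correct. You rightly localize the failure to level $m+1$, since the cutting-arc and bad-edge conditions at each level $j\le m$ depend only on $T_{j-1}$, $T_j$, $\overline{T_j}$ and $D$ and are therefore unaffected by appending $e$. The bad-edge alternative is then the first disjunct directly, and in the cutting-arc case your key observation is exactly right: $S\cap D$ is the path $P\subset T_m\cup e$, hence lies in any regular neighborhood $N$ of $\overline{T_m\cup e}$, so $D\setminus N$ is contained in the complement of $S$ and is split between the two sides of $S$ by the two nonempty classes of separated edges. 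The one cosmetic imprecision is the phrase ``interiors of the non-closure edges''; this should be read as those edges truncated only near endpoints lying in $T_m\cup e$, and since every non-closure edge has at least one endpoint outside $T_m\cup e$ (else it would be a closure edge), each truncation is a nonempty connected arc, so the argument goes through.
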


\begin{figure}[h]
\centering
\begin{picture}(120,65)
\small
\put(30,30){\circle*{2}} \put(90,30){\circle*{2}}
\put(24,34){$v$} \put(59,34){$e$} \put(92,34){$p$} \put(55,45){$R$}
\put(80,10){$\bar e$} \put(80,45){$\bar e^\prime$} \put(115,34){$e^\prime$}
\put(30,30){\line(0, 1){30}}
\put(30,30){\line(-1, 0){30}}
\put(30,30){\line(0,-1){30}}
\put(30,30){\line( 1, 0){60}}
\put(90,30){\line(1, 0){30}}
\put(90,30){\line(0, 1){30}}
\put(90,30){\line(0,-1){30}}
\end{picture}
\caption{Extending $T_m$ along $e$ in $D$}\label{fig:extending T_m}
\end{figure}
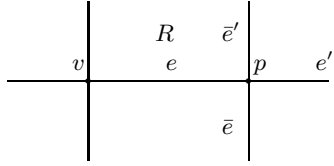

Suppose that $T_0\subset T_1\subset\cdots\subset T_m$ and $e$ are as in the hypothesis of Proposition~\ref{prop:cut-point} and that $T_0\subset T_1\subset\cdots\subset T_m\subset (T_m\cup e)$ is not a good filtered tree.
In Figure~\ref{fig:extending T_m}, $v$ is the vertex of $e$ belonging to $T_m$ and $p$ is the other vertex of $e$. The three edges $e^\prime$, $\bar e$, $\bar e^\prime$ are incident to $e$ at $p$ and $R$ is a region of $D$ whose boundary contains $e$ and $\bar e^\prime$. Proposition~\ref{prop:cut-point} implies that there are three cases to consider:
\begin{enumerate}
\item[(B1)] $e^\prime$ is a bad edge of $\overline{T_m\cup e}$ joining $p$ and a vertex $c$ of $T_m$.
\item[(B2)] There is a simple arc $\Gamma_p$ contained in a single region of $D$ joining $p$ and a vertex $c$ of $T_m$ such that the unique cycle $\overline\Gamma_p$ in $T_m\cup e\cup\Gamma_p$ does not enclose the region $R$ but encloses the edges $e^\prime$ and $\bar e$.
\item[(B3)] There is a simple arc $\Gamma_p$ as in (B2) except that $\overline\Gamma_p$ does not enclose $e^\prime$.
\end{enumerate}

\begin{figure}[h]
\centering
\begin{picture}(110,75)(10,-10)
\small
\put(30,30){\circle*{2}} \put(90,30){\circle*{2}} \put(120,30){\circle*{2}}
\put(24,34){$v$} \put(59,34){$e$} \put(92,34){$p$} \put(55,45){$R$}
\put(117,34){$c$}\put(107,8){$T_m$} \put(60,-10){(B1)}
\put(85,15){$\bar e$} \put(83,40){$\bar e^\prime$} \put(105,34){$e^\prime$}
\put(30,30){\line(0, 1){20}}
\put(30,30){\line(-1, 0){15}}
\put(30,30){\line(0,-1){15}}
\put(30,30){\line( 1, 0){60}}
\put(90,30){\line(1, 0){30}}
\put(90,30){\line(0, 1){20}}
\put(90,30){\line(0,-1){15}}
\qbezier[15](10,30)(10,17.5)(10,5)
\qbezier[75](10,5)(65,5)(120,5)
\qbezier[15](120,5)(120,17.5)(120,30)
\end{picture}\qquad
\begin{picture}(105,75)(10,-10)
\small
\put(10,30){\circle*{2}} \put(30,30){\circle*{2}} \put(90,30){\circle*{2}} 
\put(24,34){$v$} \put(59,34){$e$} \put(92,23){$p$} \put(55,45){$R$}
\put(8,34){$c$}\put(102,8){$\Gamma_p$} \put(55,-10){(B2)}
\put(85,15){$\bar e$} \put(83,40){$\bar e^\prime$} \put(105,32){$e^\prime$}
\put(30,30){\line(0, 1){20}}
\put(30,30){\line(-1, 0){15}}
\put(30,30){\line(0,-1){15}}
\put(30,30){\line( 1, 0){60}}
\put(90,30){\line(1, 0){20}}
\put(90,30){\line(0, 1){20}}
\put(90,30){\line(0,-1){15}}
\qbezier[15](10,30)(10,17.5)(10,5)
\qbezier[75](10,5)(62.5,5)(115,5)
\qbezier[25](115,5)(115,27.5)(115,50)
\qbezier[25](90,30)(102.5,40)(115,50)
\end{picture}\qquad
\begin{picture}(100,75)(10,-10)
\small
\put(10,30){\circle*{2}} \put(30,30){\circle*{2}} \put(90,30){\circle*{2}}
\put(24,34){$v$} \put(59,34){$e$} \put(92,34){$p$} \put(55,45){$R$}
\put(8,34){$c$}\put(102,8){$\Gamma_p$} \put(55,-10){(B3)}
\put(85,15){$\bar e$} \put(83,40){$\bar e^\prime$} \put(105,34){$e^\prime$}
\put(30,30){\line(0, 1){20}}
\put(30,30){\line(-1, 0){15}}
\put(30,30){\line(0,-1){15}}
\put(30,30){\line( 1, 0){60}}
\put(90,30){\line(1, 0){20}}
\put(90,30){\line(0, 1){20}}
\put(90,30){\line(0,-1){15}}
\qbezier[15](10,30)(10,17.5)(10,5)
\qbezier[75](10,5)(55,5)(100,5)
\qbezier[15](100,5)(95,17.5)(90,30)
\end{picture}
\caption{Three bad cases for $T_m\cup e$}\label{fig:3 bad cases}
\end{figure}
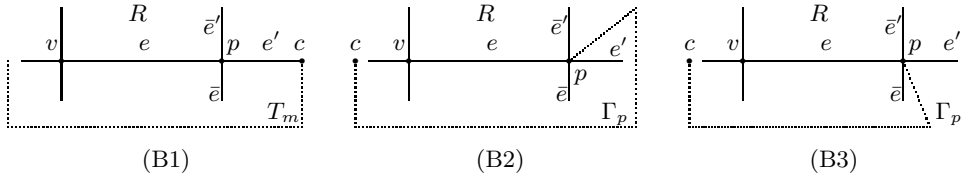

For each of the above cases, we say that $T_m\cup e$ is a (B$n$)-\emph{extension\/} of $T_m$, $n=1,2,3$. If $T_0\subset T_1\subset\cdots\subset T_m\subset (T_m\cup e)$ is a good filtered (spanning) tree, we say that $T_m\cup e$ is a \emph{good extension\/} of $T_m$.

\section{Main Theorems}
Before we state our main theorems, we give several lemmas and corollaries. The first two lemmas are translations of the two lemmas written in the language of knot-spoke diagrams into the ones written in the language of filtered trees.
\begin{lemma}[Lemma~\ref{lem:no cut-point} rephrased]\label{lem:Bae-Park-tree}
Let $D$ be a prime knot diagram with $c(D)$ crossings and let $T_0\subset T_1\subset\cdots\subset T_m$ be a non-spanning good filtered tree in $D$.
Then there are two edges $e$ and $f$ in $D\setminus\overline T_m$ such that $T_m\cup e$ and $T_m\cup f$ are good extensions of $T_m$.
\end{lemma}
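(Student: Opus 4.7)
The plan is to transfer the question into Bae--Park's knot-spoke framework and invoke Lemma~\ref{lem:no cut-point}. I would collapse every edge of $T_m$ to a single point $v_0$ to produce a knot-spoke diagram $D' := D/T_m$. Under this contraction, the edges of $\overline T_m \setminus T_m$ (all good by hypothesis on $T_m$) become simple loops at $v_0$, which are reinterpreted as spokes, while the edges of $D \setminus \overline T_m$ become the non-spoke edges of $D'$; those with exactly one endpoint in $T_m$ are precisely the non-loop non-spoke edges incident to $v_0$.

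I would first verify the hypotheses of Lemma~\ref{lem:no cut-point} for $D'$. Primeness of $D'$ pulls back to primeness of $D$, since any primeness-violating simple closed curve in $D'$ can be lifted through the contraction. Cut-point-freeness of $D'$ follows from goodness of $T_m$: a cut-point at $v_0$ would descend to a cutting arc for some $T_j$, while a cut-point at another $4$-valent vertex would contradict primeness of $D$. Non-spanning of $T_m$ ensures at least one $4$-valent vertex of $D$ lies outside $T_m$, so $D'$ has at least two multi-valent vertices. Lemma~\ref{lem:no cut-point} then yields two non-loop non-spoke edges $e$ and $f$ of $D'$ at $v_0$ such that $D'_e$ and $D'_f$ remain cut-point free. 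These lift to edges of $D \setminus \overline T_m$, each with a single endpoint in $T_m$, so $T_m \cup e$ and $T_m \cup f$ are trees.

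To conclude that they are good extensions of $T_m$ I must rule out the three bad configurations (B1), (B2), (B3). Cases (B2) and (B3) fall directly from cut-point-freeness of $D'_e$: a cutting arc for $T_m \cup e$ of either type descends to a simple closed curve through $v_0$ in $D'_e$ meeting $D'_e$ only at $v_0$ and separating non-spoke edges, which would exhibit $v_0$ as a cut-point of $D'_e$ and contradict Lemma~\ref{lem:no cut-point}.

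The main obstacle is case (B1). Here the straight-through partner $e'$ of $e$ at its new vertex $p$ is already an edge of $D'$ joining $p$ to $v_0$; after contracting $e$, the edge $e'$ becomes a loop at $v_0$ in $D'_e$ which, by cut-point-freeness, must be simple and is then reinterpreted as a spoke. Such a configuration is thus invisible to Lemma~\ref{lem:no cut-point}, yet it is a genuine bad edge in $\overline{T_m \cup e}$ whenever $T_m \cup e$ is strictly non-spanning. (If instead $T_m \cup e$ is already a spanning tree, the good-extension definition tolerates bad edges and there is nothing more to check.) I would handle the non-spanning case by reopening the proof of Lemma~\ref{lem:no cut-point} rather than using it as a black box: each parallel-bigon configuration $\{e, e'\}$ at $v_0$ occupies two non-loop non-spoke slots at $v_0$, and a pigeonhole argument on the valence of $v_0$, combined with the cut-point analysis inside Bae--Park's iterated-contraction proof, should furnish at least two candidates whose straight-through partners do not loop back to $v_0$. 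This refinement, rather than the application of Lemma~\ref{lem:no cut-point} itself, is the crux of the argument.
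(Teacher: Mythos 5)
Your overall strategy -- contract $T_m$ to a knot-spoke diagram $D' := D/T_m$ and invoke Lemma~\ref{lem:no cut-point} -- is the intended one; the paper gives no argument and simply presents this lemma as ``Lemma~\ref{lem:no cut-point} rephrased,'' so there is no paper proof to compare against. The setup of the correspondence (good edges of $\overline T_m \setminus T_m$ become spokes, edges of $D\setminus\overline T_m$ with one endpoint in $T_m$ become non-loop non-spoke edges at $v_0$, primeness and cut-point-freeness of $D'$ follow from primeness of $D$ and goodness of $T_m$) and the identification of (B2), (B3) with cut-points of $D'_e$ are correct.

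However, the obstacle you raise at (B1) is not real, and it stems from a misreading of the contraction operation. Bae--Park's $D_e$ replaces by a spoke only the simple loops arising from the two \emph{transversal} edges $\bar e$, $\bar e'$; the straight-through partner $e'$ is never reinterpreted as a spoke. So when $e'$ runs from $p$ back to $v_0$ in $D'$, it survives in $D'_e$ as a genuine non-spoke loop at $v_0$. The paper records explicitly, just after defining cut-points, that ``a cut-point free knot-spoke diagram with more than one non-spoke edge cannot have a loop.'' Since $T_m\cup e$ is non-spanning, $D'_e$ still has at least one $4$-valent vertex and hence additional non-spoke edges, so a non-spoke loop at $v_0$ forces a cut-point there (a simple closed curve through $v_0$ hugging $e'$ separates $e'$ from the remaining non-spoke edges, all of which lie outside the bigon $e\cup e'$ by primeness). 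Thus (B1) is excluded by the same cut-point-freeness conclusion of Lemma~\ref{lem:no cut-point} that excludes (B2) and (B3), and no reopening of the Bae--Park proof or pigeonhole refinement is needed. Your parenthetical about the spanning case is correct (there a bad edge in the closure is tolerated, and indeed unavoidable). With the misreading corrected, the direct translation closes cleanly and your proposed extra argument is superfluous.
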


\begin{lemma}[Lemma~\ref{lem:Jin-Park} rephrased]\label{lem:Jin-Park-tree}
Let $T_0\subset T_1\subset\cdots\subset T_m$ be a good filtered tree in a prime diagram $D$ with $m<c(D)-2$.
Let $e$ be an edge in $D$ such that $T_m \cap e$ is a single vertex, say $v$.
Suppose that $T_m\cup e$ is not a good extension of $T_m$.
Then there exists a simple closed curve $S$ satisfying the following conditions.
\begin{enumerate}
\item $D \cap S$ is a simple arc which is the union of $e$ and some edges of $T_m$.
\item $S$ separates $\bar e$ and $\bar e^\prime$ where the four edges incident to $p$, the endpoint of $e$ other than $v$, are labeled with $e, \bar e, e^\prime, \bar e^\prime$ as in Figure~\ref{fig:extending T_m}.
\item $S$ separates $D$ into two subgraphs $\bar D$ and $\bar D^\prime$ containing $\bar e$ and $\bar e^\prime$, respectively and satisfying $\bar D \cap \bar D^\prime = D \cap S$. Futhermore there is a sequence $e_1, e_2, \cdots, e_k$ of $D\setminus\ (\bar D^\prime \cup \overline T_m)$ such that
    $T_0 \subset \cdots \subset{T_m} \subset{T_m \cup e_1} \subset \cdots \subset{T_m \cup e_1 \cup \cdots \cup e_k} $ is a good filtered tree and $D \setminus {\overline{T_m \cup e_1 \cdots \cup e_k}} = \bar D^\prime \setminus \overline{T_m \cup e}$.
\end{enumerate}
\end{lemma}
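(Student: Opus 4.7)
The plan is to translate the proof strategy of Lemma~\ref{lem:Jin-Park} into the filtered-tree language, using the three-way classification of obstructions (B1), (B2), (B3) in Figure~\ref{fig:3 bad cases} as the starting point. Since $T_m\cup e$ fails to be a good extension, Proposition~\ref{prop:cut-point} places us in exactly one of these three cases, and in each case we already have a geometric object near $p$ (the bad edge $e'$ itself, or the cutting arc $\Gamma_p$) that lies in a single face of $D$ and can serve to close up the simple closed curve $S$.

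Explicit construction of $S$: let $\gamma\subset T_m$ be the unique path from $v$ to the vertex $c\in T_m$ appearing in each of (B1)--(B3). In case (B1), take $S$ to run from $p$ along a parallel push-off of $e'$ on the side opposite to $\bar e'$, through $c$, then along $\gamma$ back to $v$, and finally along $e$ from $v$ to $p$. In cases (B2) and (B3), replace the push-off of $e'$ by $\Gamma_p$, which by hypothesis is already disjoint from $D$ except at its endpoints. In every case $D\cap S=e\cup\gamma$, giving condition (1), and a local check at $p$ (using the chosen sector for the exit of $S$) shows that $S$ separates $\bar e$ from $\bar e'$ locally, hence globally, giving condition (2).

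Extraction of the sequence $e_1,\ldots,e_k$: let $\bar D$ and $\bar D'$ be the subgraphs cut off by $S$ with $\bar e\in\bar D$ and $\bar e'\in\bar D'$. View the pair $(\bar D,T_m)$ as a restricted problem for Lemma~\ref{lem:Bae-Park-tree}; primeness and the cut-point-free property needed on $\bar D$ are inherited from $D$, because any obstruction to them in $\bar D$ would lift to a corresponding obstruction for $D$ across $S$. Apply Lemma~\ref{lem:Bae-Park-tree} inductively, at each stage selecting one of the two good extensions that lies on the $\bar e$-side of $S$, until the tree spans every vertex of $\bar D\setminus\bar D'$. At that point $p$ has been absorbed into the tree, so $e$ and every remaining edge of $\bar D$ lie in the closure; the complement of the closure in $D$ is then exactly $\bar D'\setminus\overline{T_m\cup e}$, as required.

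The main obstacle I expect is showing that at each inductive step at least one of the two good extensions produced by Lemma~\ref{lem:Bae-Park-tree} lies on the $\bar e$-side of $S$ rather than across it. Since $S$ is built only from $e$ and edges of $T_m$, an extension by an edge on the wrong side would either produce a bad edge (an analogue of case (B1) across $S$) or a cutting arc enclosing $\bar e'$, and in either case one must verify that such a configuration contradicts the primeness of $D$ together with the cut-point-free property of $T_m\cup e_1\cup\cdots\cup e_j$ built up so far. This verification is the natural filtered-tree counterpart of the cut-point-free half-diagram argument that Jin and Park use in the knot-spoke proof of Lemma~\ref{lem:Jin-Park}, and I expect the translation to go through essentially unchanged once the vocabulary is matched up.
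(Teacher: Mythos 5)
Your construction of $S$ from the obstruction data (the cutting arc $\Gamma_p$ in cases (B2)--(B3), a push-off of the bad edge $e'$ in case (B1), closed up by $e$ and the tree path $\gamma$ from $v$ to $c$) is the right shape of argument, and it does deliver conditions (1) and (2). The trouble is entirely in condition (3), and the obstacle you flag at the end is not a technicality you can defer --- it is the whole content of the lemma, and your argument does not actually resolve it.

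Lemma~\ref{lem:Bae-Park-tree} guarantees that a non-spanning good filtered tree has at least two good extensions, but it gives no control over where those extensions live relative to $S$: both could be edges of $\bar D'$. You write that a wrong-side extension "would produce a bad edge or a cutting arc enclosing $\bar e'$, and \dots such a configuration contradicts the primeness of $D$," but this is confused: a good extension landing on the $\bar e'$-side is still a good extension --- it produces no bad configuration at all; it is merely useless for building a tree that fills $\bar D$. What you would actually have to prove is that if, at some stage, \emph{every} good extension of $T_m\cup e_1\cup\cdots\cup e_j$ lies inside $\bar D'$, then something structural breaks. That statement is precisely the half-diagram, cut-point analysis carried out in the original proof of Lemma~\ref{lem:Jin-Park} (see \cite[Proposition~8]{Jin-Park2007}); it does not follow from Lemma~\ref{lem:Bae-Park-tree}, so at this point your proposal has reduced the claim to the original difficulty without discharging it. Two smaller gaps compound this: you invoke Lemma~\ref{lem:Bae-Park-tree} "inside $\bar D$," but $\bar D$ is a tangle with boundary arc $D\cap S$, not a prime knot diagram, so the lemma does not apply to it as stated and "inheriting primeness" needs a real argument; and you treat (B1)--(B3) uniformly, although they are not symmetric --- the paper's remark singles out (B3), where the sequence can be taken to end with $e$, whereas in (B1) and (B2) the vertex $p$ must be reached through $\bar e$ instead, a distinction your construction should track.
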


\begingroup
\noindent{\bf Remark.} If $T_m \cup e$ is a (B3)-extension of $T_m$, the sequence $e_1, e_2, \cdots, e_k$ of Lemma~\ref{lem:Jin-Park-tree} can be chosen so that $e_k=e$. See the original proof of Lemma~\ref{lem:Jin-Park}~\cite[Proposition 8]{Jin-Park2007}.
\endgroup

\medskip
The closure of a region divided by a diagram $D$ is called a \emph{face\/} of $D$. Let $T_0\subset T_1\subset T_2\subset\cdots\subset T_j$ be a non-spanning filtered tree  of $D$. If $T_j$ has a cutting arc $\Gamma_p$, we may assume that it is \emph{innermost}, in the following sense: Let $F$ be the face of $D$ containing $\Gamma_p$ and let $\Delta\subset F$ be the disk enclosed by the unique cycle of $T_j\cup\Gamma_p$ satisfying $\partial\Delta\subset\partial F\cup\Gamma_p$. Then any cutting arc of $T_j$ contained in $\Delta$ is isotopic to $\Gamma_p$.

The following lemma asserts that two innermost cutting arcs are essentially disjoint. We omit the proof.

\begin{lemma}\label{lem:two cutting arcs}
Let $T_0\subset T_1\subset \cdots\subset T_j$ be a non-spanning filtered tree of $D$.
If $\Gamma_p$ and $\Gamma_q$ are innermost cutting arcs of $T_i$ and $T_j$, respectively, for some $i<j$, then we can isotope $\Gamma_p$ and $\Gamma_q$ so that they do not intersect in their interiors.
\end{lemma}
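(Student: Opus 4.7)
The plan is to reduce the statement to a chord-isotopy problem inside a single face of $D$ and then to derive a contradiction from the innermost hypothesis on $\Gamma_q$. Since any cutting arc $\Gamma$ satisfies $\Gamma \cap D = \{p, c\}$, its interior lies in one open face of $D$, so each of $\Gamma_p$ and $\Gamma_q$ is contained in the closure of a single face. If these two faces differ, the interiors of $\Gamma_p$ and $\Gamma_q$ are automatically disjoint and there is nothing to prove; so assume both lie in the closure of a common face $F$, a topological disk whose boundary is made of edges and vertices of $D$.

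View $\Gamma_p$ and $\Gamma_q$ as chords of $F$ with endpoints at vertices on $\partial F$, and isotope them rel endpoints inside $F$ to minimize the number of transverse interior intersections. A standard innermost-bigon argument shows that if such a bigon bounded by subarcs of $\Gamma_p$ and $\Gamma_q$ existed in $F$, an isotopy across the bigon would reduce the intersection number by two; hence after minimization the intersection number is either $0$ or $1$. If it is $0$ we are done, so assume it is $1$: then the four endpoints $p, q, c_p, c_q$ alternate around $\partial F$ and $\Gamma_p$ meets $\Gamma_q$ transversely at a single point $x$.

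Let $\Delta_q \subset F$ be the innermost disk for $\Gamma_q$ with respect to $T_j$; its boundary is $\Gamma_q$ together with a subarc $A_q \subset \partial F$ from $q$ to $c_q$ that, by the alternation, contains exactly one of $p, c_p$ in its interior. After relabeling we may assume it contains $p$. The subarc of $\Gamma_p$ from $p$ to $x$ then lies in $\overline{\Delta_q}$; concatenating it with the subarc of $\Gamma_q$ from $x$ to $q$, smoothing at $x$ and pushing slightly off $\Gamma_q$, yields a simple arc $\tilde\Gamma \subset \Delta_q$ from $p$ to $q$ whose interior is disjoint from $D$. Its endpoints satisfy $p \in T_i \setminus T_{i-1} \subset T_{j-1}$ and $q \in T_j \setminus T_{j-1}$, precisely the pattern required of a cutting arc of $T_j$; and since $p \notin \{q, c_q\}$, the arc $\tilde\Gamma$ is not isotopic to $\Gamma_q$ rel endpoints.

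The main obstacle is to verify that $\tilde\Gamma$ is actually a cutting arc of $T_j$, i.e., that the simple closed curve in $T_j \cup \tilde\Gamma$ separates edges of $D \setminus \overline T_j$ into two non-empty parts. I expect this to be done by comparing the new cycle to $C_p = (T_i\text{-path from }p\text{ to }c_p) \cup \Gamma_p$ and $C_q = (T_j\text{-path from }q\text{ to }c_q) \cup \Gamma_q$, whose separation properties are given, and by using the local picture at $x$ to determine which side of the new cycle each edge of $D \setminus \overline T_j$ lies on. Once separation is established, $\tilde\Gamma$ is a cutting arc of $T_j$ contained in $\Delta_q$ and not isotopic to $\Gamma_q$, contradicting the innermost hypothesis on $\Gamma_q$ and ruling out the alternating case, which completes the proof.
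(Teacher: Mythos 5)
The paper itself omits the proof of this lemma, so your proposal has to stand on its own. The reduction to a single face, the minimal-position argument giving zero or one interior crossings, and the idea of contradicting the innermost hypothesis on $\Gamma_q$ in the alternating case are all reasonable. The construction of $\tilde\Gamma$ and the verification of its endpoint types are also fine. But the step you flag as ``the main obstacle'' --- showing that $\tilde\Gamma$ really is a cutting arc of $T_j$ --- is exactly where the proof lives, and the comparison you propose does not close it. Concretely: adding $x$ as a vertex, the graph $T_j\cup\Gamma_p\cup\Gamma_q$ cuts $S^2$ into four regions, one for each corner at $x$; write $W$ for the region lying between the $p$-arm of $\Gamma_p$ and the $q$-arm of $\Gamma_q$. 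The cycle in $T_j\cup\tilde\Gamma$ separates the edges of $D\setminus\overline T_j$ into those lying in $W$ and those lying in the other three regions, so $\tilde\Gamma$ is a cutting arc of $T_j$ only if $W$ contains at least one edge of $D\setminus\overline T_j$. Neither hypothesis gives you this. The fact that $\Gamma_q$ is a cutting arc of $T_j$ only produces an edge of $D\setminus\overline T_j$ somewhere in $\Delta_q$, which is the union of $W$ with the fourth region, and that edge may well lie in the fourth region. The fact that $\Gamma_p$ is a cutting arc of $T_i$ concerns $D\setminus\overline T_i$, and since $i<j$ every edge of $D\setminus\overline T_i$ that it places on the relevant side could lie in $\overline T_j\setminus\overline T_i$ and hence not count. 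So the case $W\cap(D\setminus\overline T_j)=\emptyset$ is live, and in that case your $\tilde\Gamma$ is not a cutting arc of $T_j$ and yields no contradiction; this residual case (which is also where the excluded ``bigon with a single edge'' degeneracy sits) needs a genuinely different argument, and you have not supplied one.

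Two smaller omissions compound this. First, the definition of a cutting arc excludes arcs forming a bigon with a single edge of $D$, and you never check this for $\tilde\Gamma$; when $W$ contains little or none of $D$ this is precisely the condition that can fail. Second, of the two corner arcs at $q$ (one joining $q$ to $p$ across the region $W$, one joining $q$ to $c_p$ across the opposite corner), only one lies in the innermost disk $\Delta_q$, so you cannot freely switch to the other candidate if the first fails the separation test. Until you show how to produce, in every configuration of the edges of $D\setminus\overline T_j$ among the four regions, either a legitimate non-isotopic cutting arc of $T_j$ inside $\Delta_q$ or some other contradiction, the alternating case is not ruled out and the proof is incomplete.
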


If $p$ and $q$ are the two vertices of an edge $e$ of $D$, we write $e=\overline{pq}$, even in the case that there is another edge joining $p$ and $q$ if we understand which is $e$.

\begin{lemma}\label{lem:step 2}
 Let $T_0\subset T_1\subset \cdots\subset T_m$ be a non-spanning good filtered tree of $D$ and let $\overline{sp}$, $\overline{pq}$, and $\overline{qr}$ be three consecutive edges along the boundary $\partial F$ of a face $F$ of $D$. Suppose that
 $\partial F\cap T_m\ne\emptyset$ and $\overline{pq}\cap T_m=\emptyset$.
 Then we can always construct a sequence of successive good extensions $T_{m+1}\subset \cdots\subset T_{m+k}$ of $T_m$ such that the closure $\overline T_{m+k}$ contains all edges of $\partial F$ except $\overline{sp}$,
$\overline{pq}$, and $\overline{qr}$.
\end{lemma}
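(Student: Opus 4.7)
The plan is to extend $T_m$ by walking outward along the arc $\gamma\subset\partial F$ obtained from $\partial F$ by removing the three consecutive edges $\overline{sp}$, $\overline{pq}$, $\overline{qr}$ together with the interior vertices $p,q$. Since $\overline{pq}\cap T_m=\emptyset$ forces $p,q\notin T_m$, the hypothesis $\partial F\cap T_m\ne\emptyset$ provides a vertex $v_0\in\gamma\cap T_m$. The goal is to produce good extensions $T_{m+1}\subset\cdots\subset T_{m+k}$ that drive every vertex of $\gamma$ into the tree while keeping $p$ and $q$ out; once this is achieved, every non-excluded edge of $\partial F$ has both endpoints in $T_{m+k}$ and hence lies in $\overline T_{m+k}$, while the three excluded edges stay outside $\overline T_{m+k}$ because each of them is incident to $p$ or $q$.

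I would induct on the number of vertices of $\gamma$ still missing from the current good filtered tree $T_{m+j}$, walking from $v_0$ toward $s$ and toward $r$. At each stage pick an edge $e=\overline{uw}$ of $\gamma$ with $u\in T_{m+j}$ and $w\notin T_{m+j}$; such an $e$ exists because $\gamma$ is a connected path meeting $T_{m+j}$. If $T_{m+j}\cup e$ is a good extension of $T_{m+j}$, set $T_{m+j+1}=T_{m+j}\cup e$. Otherwise invoke Lemma~\ref{lem:Jin-Park-tree} to obtain a separating simple closed curve $S$, a splitting $D=\bar D\cup\bar D'$ with $\bar D\cap\bar D'=D\cap S$, and a sequence $e_1,\ldots,e_k$ in $D\setminus(\bar D'\cup\overline T_{m+j})$ such that $T_{m+j}\subset T_{m+j}\cup e_1\subset\cdots\subset T_{m+j}\cup e_1\cup\cdots\cup e_k$ is a chain of good extensions satisfying $D\setminus\overline{T_{m+j}\cup e_1\cup\cdots\cup e_k}=\bar D'\setminus\overline{T_{m+j}\cup e}$. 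Since $e\in\overline{T_{m+j}\cup e}$, the edge $e$ automatically lies in $\overline{T_{m+j}\cup e_1\cup\cdots\cup e_k}$, so $w$ has entered the enlarged tree and the inductive quantity strictly decreases even in the detour case.

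The main obstacle is to verify that no such detour ever drags $p$ or $q$ into the tree, for otherwise one of the excluded edges would land in the closure. This is where the fact that $e\subset\partial F$ is essential: the neighbor $\bar e'$ of $e$ at $w$ on $\partial F$ also lies on $\partial F$, and by the construction in Lemma~\ref{lem:Jin-Park-tree} sits on the $\bar D'$ side of $S$. The complement of $D\cap S$ in $S$ is a simple arc disjoint from $D$, so it lies in the interior of a single face of $D$; the only possibilities are $F$ itself or the face on the opposite side of $e$, and in both cases a direct check shows that the sub-arc of $\partial F$ from $w$ through $\bar e'$, which contains $p$, $q$ and all three excluded edges, sits entirely on the $\bar D'$ side of $S$. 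Consequently the detour edges $e_1,\ldots,e_k$, drawn from $\bar D\setminus\overline T_{m+j}$, are never incident to $p$ or $q$, and the invariant $p,q\notin T_{m+j+1}$ is preserved. Iterating the inductive step until every vertex of $\gamma$ lies in the current tree yields the required $T_{m+k}$.
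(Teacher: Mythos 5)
Your overall plan is the same as the paper's: walk outward along $\gamma=\partial F\setminus\{\overline{sp},\overline{pq},\overline{qr}\}$, appealing to Lemma~\ref{lem:Jin-Park-tree} to repair bad extensions, and argue that the detours never bring $p$ or $q$ into the tree. The bookkeeping that $e\in\overline{T_{m+j}\cup e_1\cup\cdots\cup e_k}$ (hence $w$ enters the tree and the induction advances) is correct. The gap is in the ``no drag-in'' step, which is exactly the point the paper's own terse proof leaves unjustified and which you rightly try to supply.

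Your key geometric claim, that the off-diagram arc $\Gamma=S\setminus(D\cap S)$ lies either in $F$ or in the face across $e$ from $F$, is false. At $w$ the curve $S$ runs along $e$ and then along $\Gamma$, and by condition~(2) of Lemma~\ref{lem:Jin-Park-tree} it separates $\bar e$ from $\bar e'$; this forces $\Gamma$ to leave $w$ through one of the two sectors bounded by $e'$ and a transverse edge, that is, through a face \emph{not} incident to $e$. So $\Gamma$ lies in neither $F$ nor the face opposite $e$, and the ``direct check'' that follows has nothing to check. You also silently identify two different objects under the name $\bar e'$: the transverse edge of $e$ at $w$ that lies on $\partial F$, and the transverse edge assigned by Lemma~\ref{lem:Jin-Park-tree} to the $\bar D'$ side of $S$. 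The first is determined by the combinatorics of $F$; the second by the cutting arc. Showing that these coincide (equivalently, that $F$ lies on the $\bar D'$ side) is the real content you need, because $D\setminus\overline{T_m\cup e_1\cup\cdots\cup e_k}=\bar D'\setminus\overline{T_m\cup e}$ means the detour's closure swallows all of $\bar D$, including $\overline{sp},\overline{pq},\overline{qr}$ if $F$ happened to sit on the $\bar D$ side. As written, your proof does not rule this out. A correct argument would need to use that, for extensions along $\partial F$ in a prime diagram, only case (B3) can occur and that the region $R$ (not enclosed by the cutting cycle) can be taken to be $F$; your current appeal to the position of $\Gamma$ cannot substitute for that.
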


\begin{proof}
We construct a sequence of successively extended filtered tree $T_{m+1}\subset \cdots\subset T_{m+k}$ of $T_m$ along the edges of $\partial F$ so that $T_{m+k}$ contains all vertices of $\partial F$ except $p$ and $q$. If $T_{m+i}$ is not a good extension for some $i$, we apply Lemma~\ref{lem:Jin-Park-tree} to obtain a larger good filtered tree $T^\prime$ not containing the edges $\overline{sp}$, $\overline{pq}$, and $\overline{qr}$, and continue.
\end{proof}

The following lemma is an immediate consequence of related definitions.
\begin{lemma}\label{lem:d-good-2,3}
Let $T_0\subset T_1\subset \cdots\subset T_m$ be a non-spanning good filtered tree of $D$ and let $\overline{sp}$, $\overline{pq}$, and $\overline{qr}$ be three consecutive edges along the boundary $\partial F$ of a face $F$ of $D$.
Suppose that $\overline{pq}$ is a nonalternating edge and $\partial F\setminus\{\overline{sp},\overline{pq},\overline{qr}\}\subset \overline T_m$.
Then $\overline{pq}$ becomes a doubly good edge of $\overline T_{m+2}$ if the following two conditions are satisfied:
\begin{enumerate}
\item $T_{m+1}=T_m \cup {\overline{sp}}$ is a good extension of $T_m$.
\item $T_{m+2}=T_{m+1}\cup\overline{rq}$ is a good extension of $T_{m+1}$.
\end{enumerate}
\end{lemma}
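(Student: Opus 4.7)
The plan is to verify directly that $\overline{pq}$ satisfies each defining clause of a doubly good edge in $\overline T_{m+2}$: it is a good closure edge first appearing at step $m+2$, it is nonalternating (given), and one side of the unique cycle in $T_{m+2}\cup\overline{pq}$ contains only good edges of $\overline T_j$ for some $j\le m+2$. The nonalternating clause is hypothesis; the membership and goodness clauses are direct bookkeeping; the content sits in the ``one side'' clause.

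For membership and goodness, the two successive good extensions force $p\in T_{m+1}\setminus T_m$ (introduced by $\overline{sp}$) and $q\in T_{m+2}\setminus T_{m+1}$ (introduced by $\overline{rq}$), so both endpoints of $\overline{pq}$ lie in $T_{m+2}$ but not in $T_{m+1}$, while $\overline{pq}$ itself is not a tree edge; hence $\overline{pq}\in\overline T_{m+2}\setminus\overline T_{m+1}$. The edge added at step $m+2$ is $e_{m+2}=\overline{rq}$ with new vertex $q$, and $\overline{pq}$ meets $\overline{rq}$ at $q$. Because $\overline{pq}$ and $\overline{rq}$ are consecutive around $q$ on $\partial F$, they are adjacent (not opposite) in the cyclic order of the four edges at the $4$-valent vertex $q$, so they meet transversely; this is precisely the definition of a good closure edge.

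For the ``one side'' clause, I first identify the cycle $C$ in $T_{m+2}\cup\overline{pq}$: since $T_{m+2}$ is a tree, $C$ is $\overline{pq}$ closed by the unique tree path from $p$ to $q$, which must be $\overline{sp}\cup\pi\cup\overline{rq}$ where $\pi$ is the unique path in $T_m$ from $s$ to $r$. Let $\Delta$ be the closed disk bounded by $C$ on the side of the face $F$; it exists because $\overline{sp},\overline{pq},\overline{qr}\subset C\cap\partial F$. By the hypothesis that $\partial F\setminus\{\overline{sp},\overline{pq},\overline{qr}\}\subset\overline T_m$, the region $\Delta$ decomposes as $F$ together with sub-disks, each bounded by a closure edge of $\overline T_m$ lying on the $r$-to-$s$ portion of $\partial F$ and the corresponding $T_m$-path between its endpoints. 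If an edge of $D\setminus\overline T_m$ lay inside $\Delta$, a simple arc crossing the relevant bounding closure edge would realize a cutting arc of $T_m$, contradicting goodness of $T_0\subset\cdots\subset T_m$. Hence every edge of $D$ inside $\Delta$ lies in $\overline T_m$, and since $\overline T_m$ carries no bad edges (again by goodness), each such edge is a good edge of some $\overline T_j$ with $j\le m$. Combined with the nonalternating hypothesis, this verifies that $\overline{pq}$ is doubly good in $\overline T_{m+2}$.

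The main obstacle is the topological accounting in the last paragraph: the decomposition of $\Delta$ into the face $F$ and tree-bounded sub-disks, and the use of the no-cutting-arc property to rule out non-closure edges inside $\Delta$. Everything else, as the authors observe, is immediate from the definitions of good extension, good edge, and doubly good edge.
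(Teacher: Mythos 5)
Your bookkeeping showing that $\overline{pq}$ is a good closure edge of $\overline T_{m+2}\setminus\overline T_{m+1}$ is correct, and you rightly identify the one-side clause as the real content. The gap is there. You assert that every edge of $D$ strictly inside $\Delta$ (the $F$-side of the cycle $C$ in $T_{m+2}\cup\overline{pq}$) lies in $\overline T_m$, and you try to deduce this from the goodness of $T_0\subset\cdots\subset T_m$ alone. But ``a simple arc crossing the relevant bounding closure edge'' is not a cutting arc in the paper's sense: a cutting arc meets $D$ only in two vertices and lives in the interior of a single face, so it never crosses an edge, and you never say which $T_j$ it is supposed to be a cutting arc of. More importantly, your argument for this clause never invokes hypotheses (1) and (2); yet those are exactly what carries the load, since goodness up to step $m$ by itself does not exclude edges of $D\setminus\overline T_m$ from the disk $\Delta$.

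The clause actually follows from the no-cutting-arc condition at step $m+2$, which hypothesis (2) provides. If $\sigma=\partial F\setminus\{\overline{sp},\overline{pq},\overline{qr}\}$ is empty then $\Delta=F$ is a face and its interior is empty, so there is nothing to check. Otherwise draw a simple arc $\Gamma$ inside $F$ from $q$ to $s$; since each of the two arcs of $\partial F$ joining $q$ to $s$ has at least two edges, $\Gamma$ forms a bigon with no single edge of $D$, so $\Gamma$ is an admissible cutting-arc candidate for $T_{m+2}$. The unique cycle in $\Gamma\cup T_{m+2}$ is $\Gamma\cup\overline{rq}\cup\pi$ (where $\pi$ is the path in $T_m$ from $r$ to $s$), and it separates the edges of $D$ exactly as $C$ does, the only difference being that $\overline{pq}$ and $\overline{sp}$ are pushed onto the side away from the interior of $F$. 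Since $T_0\subset\cdots\subset T_{m+2}$ is a good filtered tree, $\Gamma$ is not a cutting arc, so one side of this cycle contains no edge of $D\setminus\overline T_{m+2}$; the corresponding side of $C$ therefore consists only of tree edges and, because $\overline T_j$ has no bad edge for each $j\le m+2$, good closure edges of some $\overline T_j$ with $j\le m+2$. That is the one-side clause, and it also shows that your commitment to the $F$-side is not justified: a priori the good side may be the other one.
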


\begin{corollary}\label{cor:d-good-2,3}
Let $T_m$, $F$, $\overline{sp}$, $\overline{pq}$, and $\overline{qr}$ be as in Lemma~\ref{lem:d-good-2,3}. Suppose that the two conditions below are satisfied:
\begin{enumerate}
\item $T_{m+1}=T_m \cup {\overline{sp}}$ is a (B3)-extension of $T_m$ so that there is a tree $T^\prime$ which has a good extension $T^\prime\cup\overline{sp}$ containing $T_{m+1}$. (See the remark following Lemma~\ref{lem:Jin-Park-tree}.)
\item ${T^\prime} \cup {\overline{sp}} \cup {\overline{rq}}$ is a good extension of ${T^\prime} \cup{\overline{sp}}$.
  \end{enumerate}
Then $\overline{pq}$ is a doubly good edge of $\overline{T^\prime \cup \overline{sp} \cup \overline{rq}}$.
\end{corollary}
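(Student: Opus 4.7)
The plan is to reduce the corollary to an application of Lemma~\ref{lem:d-good-2,3}, with the tree $T'$ of hypothesis~(1) playing the role of $T_m$. The key observation is that although $T_m \cup \overline{sp}$ is only a (B3)-extension of $T_m$---so Lemma~\ref{lem:d-good-2,3} cannot be applied directly starting from $T_m$---the remark following Lemma~\ref{lem:Jin-Park-tree} guarantees a sequence of good extensions of $T_m$ that ends with the edge $\overline{sp}$. Deleting that final edge from such a sequence yields a good filtered tree $T' \supset T_m$ for which $T' \cup \overline{sp}$ is a good extension of $T'$; this is precisely the content of hypothesis~(1) as stated.

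Next I verify that the hypotheses of Lemma~\ref{lem:d-good-2,3} still hold when $T_m$ is replaced by $T'$. The face $F$ and the three consecutive boundary edges $\overline{sp}$, $\overline{pq}$, $\overline{qr}$ are unchanged, and the nonalternatingness of $\overline{pq}$ is inherited from the ambient setup of Lemma~\ref{lem:d-good-2,3}. The requirement $\partial F \setminus \{\overline{sp},\overline{pq},\overline{qr}\} \subset \overline{T'}$ follows from $T_m \subset T'$ (hence $\overline{T_m} \subset \overline{T'}$) together with the analogous inclusion for $T_m$ supplied by the hypothesis of Lemma~\ref{lem:d-good-2,3}. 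Conditions~(1) and~(2) of Lemma~\ref{lem:d-good-2,3} for $T'$ read ``$T' \cup \overline{sp}$ is a good extension of $T'$'' and ``$T' \cup \overline{sp} \cup \overline{rq}$ is a good extension of $T' \cup \overline{sp}$,'' which are exactly hypotheses~(1) and~(2) of the corollary. Invoking Lemma~\ref{lem:d-good-2,3} then gives that $\overline{pq}$ is a doubly good edge of $\overline{T' \cup \overline{sp} \cup \overline{rq}}$, which is the desired conclusion.

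The argument is essentially a relabeling, so there is no real technical obstacle. The only substantive conceptual point is the appeal to the remark following Lemma~\ref{lem:Jin-Park-tree}, which is precisely what permits a (B3)-extension---in contrast to (B1)- and (B2)-extensions---to be absorbed as the final edge of a longer chain of good extensions; without this feature, the corollary could not be reduced to Lemma~\ref{lem:d-good-2,3} in this clean way.
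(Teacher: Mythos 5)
Your proposal is correct and takes essentially the approach the paper intends: the paper presents this as a direct corollary of Lemma~\ref{lem:d-good-2,3} with $T'$ in the role of $T_m$, leaving the verification implicit. Your check that $T' \supset T_m$ implies $\overline{T_m} \subset \overline{T'}$ (so the boundary-containment hypothesis transfers) and that the corollary's conditions become exactly conditions~(1) and~(2) of the lemma for $T'$ is the right bookkeeping, and your use of the remark following Lemma~\ref{lem:Jin-Park-tree} to produce $T'$ is precisely the point the parenthetical in hypothesis~(1) is flagging.
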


\begin{lemma}\label{lem:d-good-1}
Let $F$ be a face of a minimal crossing diagram $D$ of a prime knot such that $\partial F$ contains a nonalternating edge $\overline{pq}$. We may label the vertices of $\partial F$ as $v_0,v_1,\ldots,v_{n-2}=q,v_{n-1}=p$, cyclically around $F$, for some $n\ge3$. Then there is a good filtered tree $T_0\subset\cdots\subset T_m$ such that $T_0=v_0$, $v_i\in T_m$, $i=0,\ldots,n-3$ and $T_{m+1}$ is a good extension of $T_m$ along $\overline{v_{n-3}q}\subset\partial F$. Furthermore if the extension $T_{m+2}$ of $T_{m+1}$ along $\overline{v_0p}\subset\partial F$ is not (B3), then $\overline{pq}$ is a doubly good edge of $\overline T_{m+2}$.
\end{lemma}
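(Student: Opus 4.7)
The plan is to build the filtered tree $T_m$ by sweeping along $\partial F$ from $v_0$ towards $v_{n-3}$, handling obstructions via Lemma~\ref{lem:Jin-Park-tree}, and then to analyze what happens when the two closing extensions along $\overline{v_{n-3}q}$ and $\overline{v_0p}$ are attached.

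Stage~1 (building $T_m$ and the good extension $T_{m+1}$): Starting from $T_0 = v_0$, I would inductively attempt to extend along the edges $\overline{v_0v_1}, \overline{v_1v_2}, \ldots, \overline{v_{n-4}v_{n-3}}$ in that order. If an attempted extension along $\overline{v_{i-1}v_i}$ fails to be good, Lemma~\ref{lem:Jin-Park-tree} supplies a separating curve $S$ at $v_i$; the side $\bar D'$ contains $\bar e' = \overline{v_iv_{i+1}}$ (and hence the face $F$), so the resulting absorption pulls only the opposite side $\bar D$ into an enlarged good filtered tree $T'$. Because $\overline{v_{i-1}v_i}$ is forced into $\overline{T'}$, in particular $v_i \in T'$, and the sweep can be continued from $T'$. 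Once $v_0, \ldots, v_{n-3}$ all lie in the tree, the same strategy is applied to extend along $\overline{v_{n-3}q}$: each failed attempt is repaired by another Jin--Park absorption, and the process terminates by finiteness of $D$ with $T_{m+1} = T_m\cup\overline{v_{n-3}q}$ a good extension.

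Stage~2 (doubly-goodness of $\overline{pq}$): Set $T_{m+2} = T_{m+1}\cup\overline{v_0p}$. If this extension is good, Lemma~\ref{lem:d-good-2,3} applies directly. Otherwise, by hypothesis, the extension is of type (B1) or (B2). In both cases $\overline{pq}\in\overline{T_{m+2}}\setminus T_{m+2}$ and meets $e_{m+2}=\overline{v_0p}$ transversely at $p$, since two consecutive edges on $\partial F$ are transverse at the intervening $4$-valent crossing; hence $\overline{pq}$ is a good edge of $\overline{T_{m+2}}$, and it is nonalternating by hypothesis. The simple closed curve $C$ in $T_{m+2}\cup\overline{pq}$ consists of $\overline{pq}$, $\overline{v_0p}$, $\overline{v_{n-3}q}$, and the tree path from $v_0$ to $v_{n-3}$ in $T_m$, and encloses $F$ on one side. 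By the Stage~1 construction every Jin--Park detour bulges into the $\bar D$-side opposite to $F$; likewise in cases (B1) and (B2), the obstruction at $p$ (the bad edge $e'$ in (B1), or the cutting arc $\Gamma_p$ in (B2)) lies on the side of $C$ opposite to $F$. Hence the $F$-side of $C$ contains no edge of $D$ beyond the cycle itself, so $\overline{pq}$ is doubly good.

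The hard part will be justifying the final geometric claim of Stage~2: that the $F$-side of $C$ carries no stray edges of $D$ --- no bad edges, no branch tree edges, no edges outside $\overline{T_{m+2}}$ --- despite the obstructions handled during Stage~1. This reduces to the uniform observation that, at each obstruction along $\partial F$ and at the terminal obstruction at $p$, the ``bad'' structure sits on the side of the relevant separating curve opposite to the face $F$. Case~(B3) is excluded from the hypothesis precisely because it is the scenario in which the cutting arc at $p$ could instead lie on the $F$-side and introduce extraneous edges inside $C$, thereby breaking the doubly-good condition.
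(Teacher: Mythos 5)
The engine of the paper's proof is the observation that, because $D$ is prime, an extension attempted along an edge of $\partial F$ can never be of type (B1) or (B2); the only possible obstruction is (B3), which is exactly the case that the Remark after Lemma~\ref{lem:Jin-Park-tree} repairs so that the absorbed sequence \emph{ends with the desired edge}. Your proposal never invokes primality, and the omission creates two concrete problems. First, in Stage~1 you must produce a good extension along the specific edge $\overline{v_{n-3}q}$. The Jin--Park absorption guarantees $e_k=e$ only for (B3)-extensions; if the attempted extension were (B1) or (B2), your ``repair'' would yield a good filtered tree in which $\overline{v_{n-3}q}$ need not be the last edge added (it may land merely in the closure), and the stated conclusion about $T_{m+1}$ would not follow.

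Second, Stage~2 takes a wrong turn. If $T_{m+2}$ is a (B1)- or (B2)-extension, then by Proposition~\ref{prop:cut-point} the filtered tree $T_0\subset\cdots\subset T_{m+2}$ is \emph{not good}: $\overline{T_{m+2}}$ contains a bad edge, or there is a cutting arc. Your side-of-$C$ argument does not address this at all, and declaring $\overline{pq}$ ``doubly good'' inside a non-good filtered tree is useless for the purpose the lemma serves (it must feed Lemma~\ref{lem:d-good-2,3}, Corollary~\ref{cor:d-good-2,3} and the main theorems, all of which require the filtered tree to remain good). The correct step, and the one the paper takes, is that primality excludes (B1) and (B2) for the extension along $\overline{v_0p}$, so the hypothesis ``not (B3)'' forces $T_{m+2}$ to be a good extension, after which Lemma~\ref{lem:d-good-2,3} yields the doubly good edge immediately. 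You yourself flag the final geometric claim as ``the hard part''; it is in fact both unproven and unnecessary once primality is used where it belongs.
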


\begin{proof}
We extend $T_0$ repeatedly along the edges $\overline{v_{i-1}v_i}$, $i=1,\ldots,n-3$. These extensions are neither (B1) nor (B2) since $D$ is prime. If a (B3)-extension occurs, then, by Lemma~\ref{lem:Jin-Park-tree}, we can insert more edges before the extension to obtain a good extension along the same edge. Continuing in this manner, we obtain the good extension $T_{m+1}=T_m\cup\overline{v_{n-3}q}$ of $T_m$. Since $D$ is prime, the extension is neither (B1) nor (B2). This completes the proof.
\end{proof}

\begin{corollary}
Suppose that the hypothesis of Lemma~\ref{lem:d-good-1} holds and that $v_0$ and $p$ are two vertices of a bigonal face adjacent to $F$.
Then $\overline{pq}$ is a doubly good edge of $\overline T_{m+2}$.
\end{corollary}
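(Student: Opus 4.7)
The plan is to apply Lemma~\ref{lem:d-good-1}, which reduces the claim to the assertion that the extension $T_{m+2}=T_{m+1}\cup\overline{v_0p}$ of $T_{m+1}$ is not a (B3)-extension; the bigonal face hypothesis must be used to rule out the (B3) case. In the notation of Figure~\ref{fig:extending T_m} with $e=\overline{v_0p}$, the bigonal face adjacent to $F$ and containing $v_0, p$ as vertices shares with $F$ the unique edge of $\partial F$ joining $v_0$ and $p$, namely $e$ itself; so this bigon is the face $R$ opposite to $F$ across $e$, and its second boundary edge is a second edge $\bar e'$ from $v_0$ to $p$, appearing as the transverse edge to $e$ at $p$ on the $R$-side.

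I assume toward contradiction that $T_{m+2}$ is a (B3)-extension. Then there is a cutting arc $\Gamma_p$ in some face $F^*$ of $D$ from $p$ to a vertex $c\in T_{m+1}$, whose cycle $\overline{\Gamma_p}$ encloses $\bar e=\overline{pq}$ but encloses neither $e'$ nor $R$. I apply Lemma~\ref{lem:Jin-Park-tree} (with its Remark for the (B3) case) to produce a simple closed curve $S$ with $D\cap S=e\cup\gamma$ for some arc $\gamma\subset T_{m+1}$, separating $D$ into subgraphs $\bar D\ni\bar e$ and $\bar D'\ni\bar e'$. Since both endpoints $v_0$ and $p$ of $\bar e'$ lie on the arc $D\cap S$, the edge $\bar e'$ is entirely contained in $\bar D'$, and therefore the bigon $R$ (bounded by $e\subset S$ and $\bar e'$) sits on the $\bar D'$-side of $S$, i.e.\ $R\subset\bar D'\cup S$.

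The main obstacle is to convert this structural picture into an actual contradiction. The plan is to exploit the primeness and cut-point-freeness of $\bar D'$ (as established via the Jin-Park construction in \cite[Proposition~8]{Jin-Park2007}) together with the minimality of $c(D)$: the two crossings $v_0,p$ both lie on the boundary arc $D\cap S$ of $\bar D'$, and $\bar e'$ is the unique non-boundary edge of $\bar D'$ incident to them. A careful case analysis of the admissible arrangements of $\bar D'$ compatible with its primeness will then force $\bar D'$ to be essentially the bigon attached to $D\cap S$; but in that configuration the two crossings $v_0,p$ form a Reidemeister-II-reducible pair in $D$, contradicting the hypothesis that $c(D)$ is minimal. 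With this contradiction, $T_{m+2}$ cannot be a (B3)-extension, and Lemma~\ref{lem:d-good-1} yields that $\overline{pq}$ is a doubly good edge of $\overline T_{m+2}$.
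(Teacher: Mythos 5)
Your reduction is the same as the paper's: by Lemma~\ref{lem:d-good-1} everything comes down to showing that the extension $T_{m+2}=T_{m+1}\cup\overline{v_0p}$ cannot be a (B3)-extension, and the paper's entire proof is the assertion that the bigon rules this out. But your proposal does not actually rule it out. The step you yourself flag as ``the main obstacle'' is precisely the content of the corollary, and it is left as an unexecuted plan: ``a careful case analysis \dots will then force $\bar D'$ to be essentially the bigon.'' Nothing in Lemma~\ref{lem:Jin-Park-tree} or in the primeness of $\bar D^\prime$ bounds the size of $\bar D^\prime$; it can contain an arbitrarily large portion of $D$, so the anticipated collapse does not follow. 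Worse, the intended contradiction is itself unsound: a bigonal face in a knot diagram is not in general a Reidemeister-II-reducible pair (minimal diagrams routinely contain bigons, e.g.\ in twist regions), and the decomposition along $S$ is along an arc of the diagram rather than a transverse tangle decomposition, so no standard reducibility argument applies.

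The argument the paper intends is local and does not need Lemma~\ref{lem:Jin-Park-tree} at all. A (B3) cutting arc $\Gamma_p$ must lie in a single face, emanate from $p$ into the sector between $e^\prime$ and $\bar e$, and produce a cycle $\overline\Gamma_p$ that, among the four edges at $p$, encloses only $\bar e$ (it encloses neither $e^\prime$ nor $R$, hence neither $\bar e^\prime$). With the bigon present, $\bar e$ is the second edge of the bigon, joining $p$ back to $v_0\in T_{m+1}$, so $\bar e\subset\overline{T_{m+2}}$ and both of its endpoints lie on the cycle. One then checks that the side of $\overline\Gamma_p$ containing $\bar e$ contains no edge of $D\setminus\overline{T_{m+2}}$ (or, in the degenerate case $c=v_0$, that $\Gamma_p$ cobounds a bigon with a single edge of $D$, which the definition of a cutting arc forbids), so $\overline\Gamma_p$ fails to separate $D\setminus\overline{T_{m+2}}$ into two nonempty parts and no (B3) cutting arc exists. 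That direct verification is what your write-up is missing.
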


\begin{proof} In this case, the extension $T_{m+2}$ of $T_{m+1}$ mentioned in Lemma~\ref{lem:d-good-1} cannot be (B3).
\end{proof}

Let $n \geq 2$. An \emph{$(n,1)$-tangle\/} is an alternating tangle diagram of $n+1$ crossings whose projection is as shown in Figure~\ref{fig:non-alt-tangle} $(a)$. A nonalternating knot diagram $D$ is said to be \emph{$(n,1)$-nonalternating\/} if it can be decomposed of two alternating tangles one of which is an $(n,1)$-tangle.
Let $n \geq 1$. We can define an \emph{$n$-tangle\/} and \emph{$n$-nonalternating\/} diagram in a similar manner, using Figure~\ref{fig:non-alt-tangle} $(b)$.
A $1$-tangle is a single crossing and a $1$-nonalternating diagram is also called an \emph{almost alternating diagram}.

  \begin{figure}[h]
    \begin{center}
\includegraphics[scale=0.2]{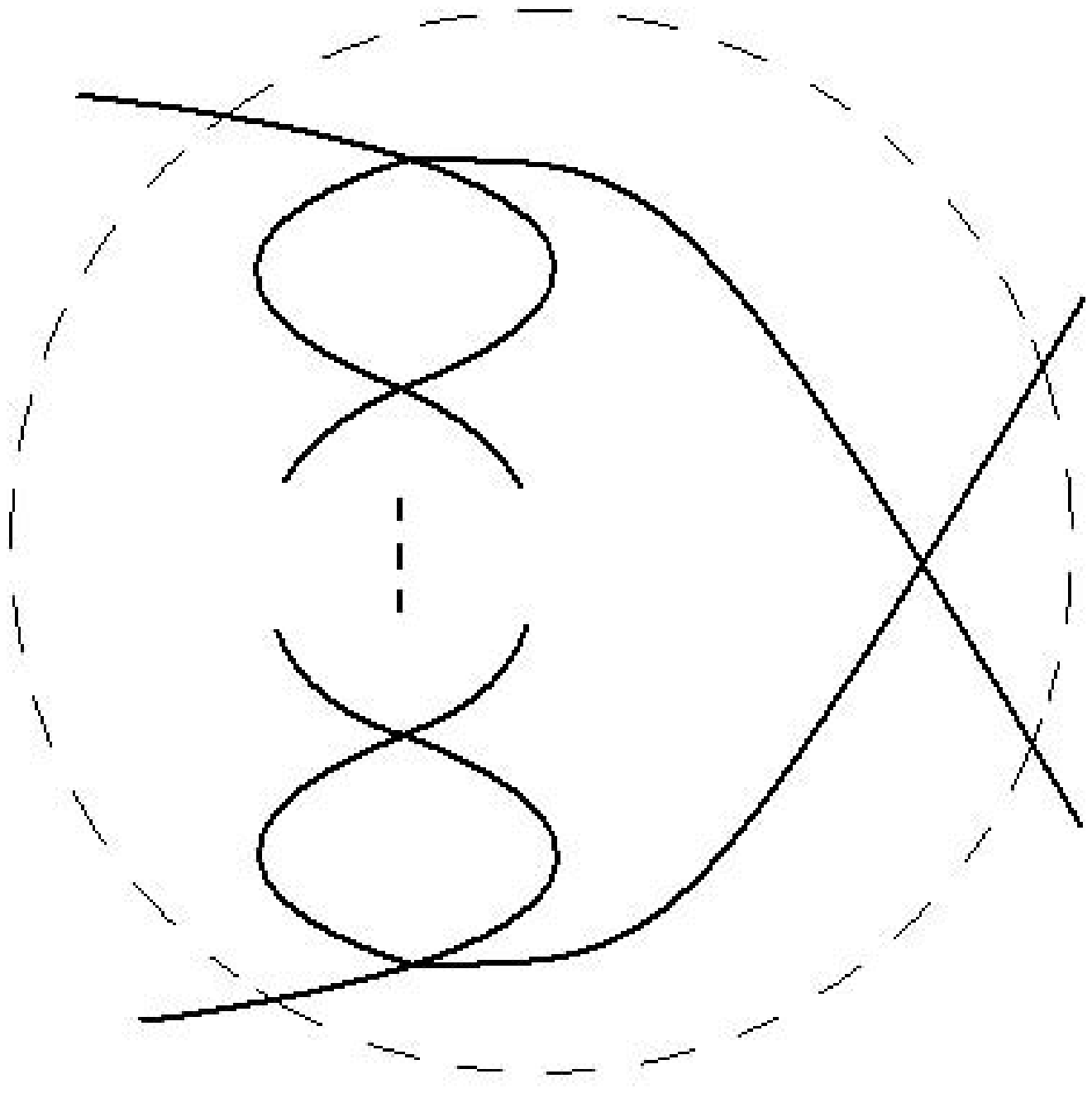} \qquad \includegraphics[scale=0.2]{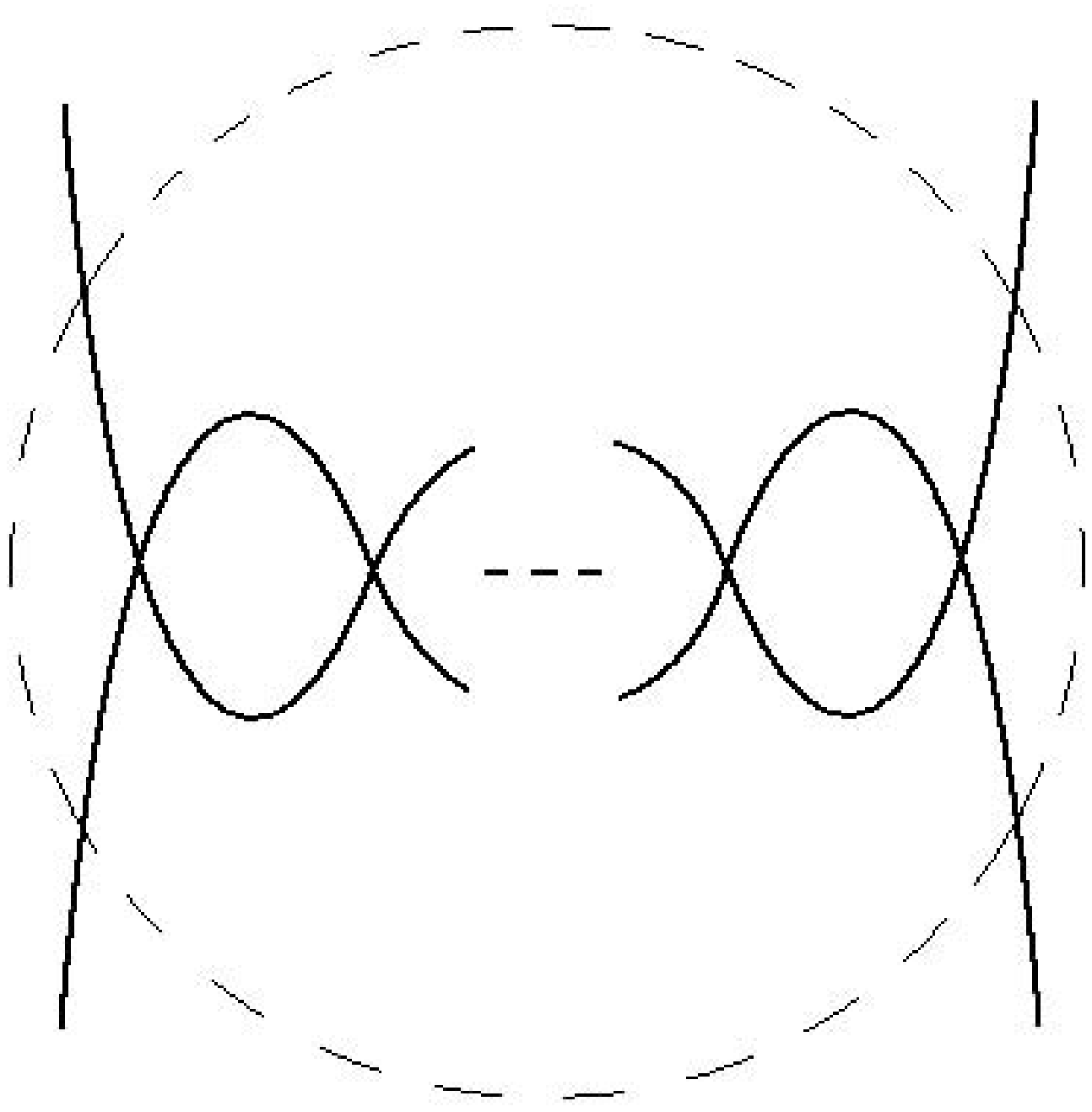} \qquad \includegraphics[scale=0.37]{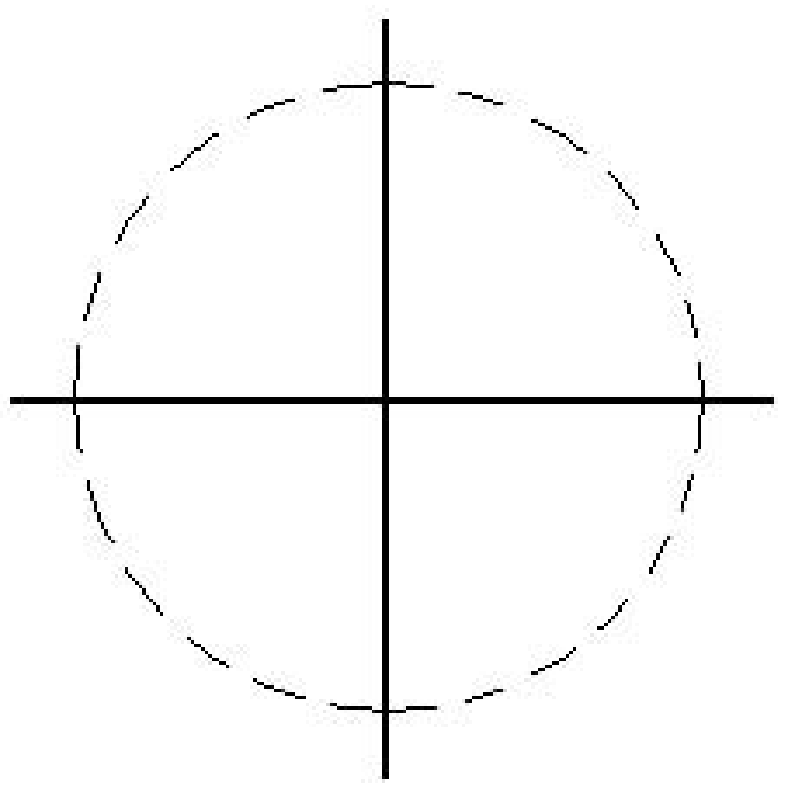} \\
\small (a) $(n,1)$-tangle\qquad\qquad\quad (b) $n$-tangle \qquad\qquad\qquad (c) $1$-tangle
    \end{center}
    \caption{Projections of various alternating tangles}\label{fig:non-alt-tangle}
  \end{figure}

Now we are ready to state our main theorems.

\begin{theorem}\label{theorem:n-1-nonalt}
Let $n\ge2$ and let $D$ be a prime $(n,1)$-nonalternating minimal crossing knot diagram having a
nonalternating triangular face $F_3$. Suppose that faces $F_1$, $F_2$, $F_3$, $F$, edges $e_1$, $e_2$ and a vertex $q$ of $D$ are labeled as in Figure~\ref{fig:(2,1)-nonalt}.
Then $\alpha(D)<c(D)$ if $D$ satisfies the two conditions below:
\begin{enumerate}
\item The face $F$ satisfies ${e_1 \cup e_2}\subset \partial F$ and $F\cap{(F_1 \cup F_2)}=\emptyset$.
\item There are two vertices $v\in{\partial F\cup\{q\}}$, $w\in{\partial F_2}$ and a string $a_{vw}$ of $D$ joining $v$ and $w$ such
that no edge of $\partial F_1 \cup \partial F_3$ is contained in $a_{vw}$.
\end{enumerate}
\end{theorem}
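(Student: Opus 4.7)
The plan is to construct a good filtered spanning tree of $D$ carrying at least three doubly good edges. By the discussion after the rephrasing of Theorem~\ref{thm:2007, Jin-Park}, each doubly good edge trims one arc off the baseline count $c(D)+2$ coming from property~(\ref{cond:region+spoke=c+2}); so three doubly good edges will yield an arc presentation with at most $c(D)-1$ arcs, i.e.\ $\alpha(D)<c(D)$. The three target nonalternating edges will be the nonalternating edge on $\partial F_3$ together with $e_1$ and $e_2$ on $\partial F$.

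First I would use Lemma~\ref{lem:d-good-1} at the triangular face $F_3$. Taking $T_0$ to be an appropriate vertex of $\partial F_3$ and extending along $\partial F_3$ until only the nonalternating edge of $F_3$ and its two neighbors are missing from $\overline{T_m}$, the lemma produces a good extension $T_{m+1}$ of $T_m$ along one of the remaining edges. If the subsequent extension along the second remaining edge is not of type (B3), then the nonalternating edge of $F_3$ is already doubly good in $\overline{T_{m+2}}$. In the (B3) case, Lemma~\ref{lem:Jin-Park-tree} together with Corollary~\ref{cor:d-good-2,3} converts the obstruction into a good extension; crucially, the absorbed block lies on one side of the separating simple closed curve, and under condition~(2) we can ensure it does not consume any edge of $\partial F_1$.

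Next I would extend the tree along the string $a_{vw}$ supplied by condition~(2), using Lemma~\ref{lem:Bae-Park-tree} to keep the extensions good, and Lemma~\ref{lem:Jin-Park-tree} whenever a single edge fails. Because no edge of $\partial F_1\cup\partial F_3$ lies on $a_{vw}$, the doubly good configuration we secured for $\partial F_3$ survives this elongation, and the tree now reaches a vertex of $\partial F_2$. Then I would apply Lemma~\ref{lem:step 2} to the face $F$: condition~(1) ($e_1\cup e_2\subset\partial F$ and $F\cap(F_1\cup F_2)=\emptyset$) lets us install three consecutive boundary edges of $F$ around $e_1$ in the role of $\overline{sp}$, $\overline{pq}=e_1$, $\overline{qr}$ of Lemma~\ref{lem:d-good-2,3}, making $e_1$ doubly good after two more good extensions, and then repeat the procedure at $e_2$. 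The hypothesis $F\cap(F_1\cup F_2)=\emptyset$ keeps the two constructions independent, since the (B3)-rerouting for $e_1$ occurs on the far side of $\partial F$ from $e_2$ and never touches $\partial F_2$. Finally, I would extend to a full spanning tree via Lemma~\ref{lem:Bae-Park-tree}.

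The principal obstacle will be ensuring simultaneous compatibility of the three doubly good configurations inside one good filtered spanning tree: each doubly good edge requires a specific two-step good extension, and a (B3)-obstruction forces us to absorb an a~priori uncontrolled block of edges via Lemma~\ref{lem:Jin-Park-tree}, which could in principle spill into one of the other critical faces $F_1$, $F_2$, $F_3$ or across $\partial F$. Conditions~(1) and~(2) are precisely what is needed to certify that each such absorbed block lies in the region separated away from the remaining target edges: condition~(1) isolates the face $F$ from $F_1\cup F_2$, while condition~(2) supplies a connecting string that avoids $\partial F_1\cup\partial F_3$, so the absorption steps never destroy a previously arranged doubly good configuration. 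Verifying this non-interference in every combinatorial case — in particular when one of the intermediate extensions along $a_{vw}$ is itself (B3) — will be the bulk of the technical work.
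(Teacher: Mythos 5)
Your high-level strategy (build a good filtered spanning tree carrying three doubly good edges, each saving one arc from the $c(D)+2$ baseline) is correct, and you cite the right toolkit. However, there is a serious gap in the choice of target edges and in the missing first step.

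The paper's proof begins by performing a type~3 Reidemeister move on $D$ over the triangular face $F_3$ to obtain a new diagram $D'$, then translates conditions (1) and (2) of the theorem into conditions $(1')$ and $(2')$ on $D'$. The three doubly good edges that the paper actually produces are $\overline{v_2v_3}$, $\overline{v_4v_5}$, $\overline{v_6v_7}$ in $D'$ — edges that become nonalternating only \emph{after} the Reidemeister move, sitting adjacent to $\partial F_1'$, $\partial F_2'$, $\partial F_3'$ respectively, one per face. Your proposal skips the Reidemeister move entirely and works in the original $D$, targeting ``the nonalternating edge on $\partial F_3$'' plus $e_1$ and $e_2$ on $\partial F$. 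But nothing in the statement or figure makes $e_1$ and $e_2$ nonalternating in $D$; indeed the hypothesis that $F_3$ is the nonalternating triangular face concentrates the nonalternating edges at $\partial F_3$, so $e_1,e_2$ are generically alternating and by definition cannot be doubly good. Moreover, even the edges on $\partial F_3$ cannot all be promoted: each doubly good edge requires its own triangular configuration in a quotient $D/T_{i-2}$, and once two of the three edges of $\partial F_3$ are absorbed into $\overline{T_m}$ they can no longer be doubly good, so $F_3$ alone supplies at most one.

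In short, the whole point of the Reidemeister move is to redistribute the nonalternating edges from $\partial F_3$ to three distinct triangular neighborhoods $(F_1', F_2', F_3')$ where the doubly-good mechanism can be applied independently. Without this step your plan cannot produce three doubly good edges, and your discussion of why conditions (1) and (2) control the (B3)-reroutings — while broadly in the right spirit — is attached to the wrong diagram and the wrong candidate edges, so the verification at the end would not go through.
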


\begin{figure}[h]
         \begin{center}
              \includegraphics[scale=0.9]{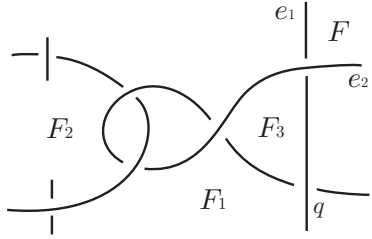}
        \end{center}
        \caption{{$(2,1)$-nonalternating diagram}}\label{fig:(2,1)-nonalt}
\end{figure}

\begin{theorem}\label{theorem:n-nonalt}
Let $n\ge2$ and let $D$ be a prime, $n$-nonalternating and minimal crossing knot diagram having a
nonalternating triangular face $F_3$. Suppose that faces $F_1$, $F_2$, $F_3$, $F$, edges $e_1$, $e_2$ and a vertex $q$ of $D$ are labeled as in Figure~\ref{fig:2-nonalt}.
Then $\alpha(D)<c(D)$ if $D$ satisfies the three conditions below:
\begin{enumerate}
     \item The face $F$ satisfies ${e_1 \cup e_2}\subset \partial F$ and $F\cap{(F_1 \cup F_2)}=\emptyset$.
     \item There are two vertices $v\in \partial F$, $w\in{\partial F_2\setminus \{q\}}$ and a string $a_{vw}$ of $D$ joining $v$ and $w$ such that no edge of $\partial F_1 \cup \partial F_2 \cup \partial F_3$ is contained in $a_{vw}$.
      \item $\partial{F_2}$ consists of at least $n+3$ edges.
\end{enumerate}
\end{theorem}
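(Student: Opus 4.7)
\medskip
\noindent\textit{Proof plan.} The overall strategy is to construct a good filtered spanning tree of $D$ carrying at least three doubly good edges. Since each doubly good edge lowers the arc count from the Bae--Park baseline $c(D)+2$ by one (see the discussion preceding Theorem~\ref{thm:2007, Jin-Park} rephrased), such a tree produces an arc presentation of $D$ with at most $c(D)-1$ arcs, which is exactly $\alpha(D)<c(D)$.

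I would begin the tree at a vertex of $\partial F$ and grow it, via repeated applications of Lemma~\ref{lem:step 2} (with Lemma~\ref{lem:Jin-Park-tree} supplying (B3)-detours when necessary), so that its closure covers $\partial F$ except for a short segment containing $e_1$ and $e_2$. Condition~(1), $F\cap(F_1\cup F_2)=\emptyset$, guarantees that these extensions do not prematurely touch the tangle faces. Next I would extend the tree along the string $a_{vw}$ from condition~(2), reaching a vertex $w\in\partial F_2\setminus\{q\}$; because $a_{vw}$ avoids every edge of $\partial F_1\cup\partial F_2\cup\partial F_3$, each of these extensions can again be arranged to be good (possibly after invoking Lemma~\ref{lem:Jin-Park-tree} to replace a (B3)-extension by a longer composition of good extensions).

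I would then sweep around $\partial F_2$ starting from $w$ in both directions, stopping just short of the three edges of $\partial F_2$ adjacent to the $n$-tangle and $q$. Condition~(3), $|\partial F_2|\ge n+3$, supplies precisely the slack needed for this sweep to cover $\partial F_2$ without swallowing any critical edge. At this stage, three doubly good edges can be harvested: one from the nonalternating edge of the triangular face $F_3$, via Lemma~\ref{lem:d-good-1} (or its corollary when an adjacent bigonal face is available) or Corollary~\ref{cor:d-good-2,3} (when the last extension toward $F_3$ is (B3)); and two more from the $n$-tangle, since $n\ge2$ guarantees at least two nonalternating edges of the tangle on faces whose remaining boundary now lies in $\overline T_m$, so that Lemma~\ref{lem:d-good-2,3} (or Corollary~\ref{cor:d-good-2,3}) applies to each.

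The main obstacle is the simultaneous bookkeeping for (B3)-detours. Each invocation of Lemma~\ref{lem:Jin-Park-tree} enlarges the tree in a way that could consume one of the three edges we must keep uncovered, namely the nonalternating edge of $F_3$ and the two nonalternating edges flanking the $n$-tangle. The role of condition~(2) is precisely to quarantine such detours outside $\partial F_1\cup\partial F_2\cup\partial F_3$, and the role of condition~(3) is to supply enough boundary edges of $F_2$ that the $\partial F_2$-sweep remains compatible with all three doubly good identifications. Verifying that these two combinatorial inputs together always suffice --- in particular that no detour produced by Lemma~\ref{lem:Jin-Park-tree} can force the tree to grow past a critical edge --- is the technical heart of the argument.
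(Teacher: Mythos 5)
There is a genuine gap: your proposal works directly with the diagram $D$ and never mentions the type-3 Reidemeister move, whereas the paper's proof begins by performing a Reidemeister~3 move over the face $F_3$ to produce a new diagram $D'$, modifies conditions~(1)--(3) to conditions~($1'$)--($3'$) on $D'$, and only then constructs the filtered tree (in $D'$, not $D$). This step is not a cosmetic simplification. In $D$ the nonalternating edges sit crowded on the boundary of the single small triangle $F_3$; since a doubly good edge must lie in $\overline T_i\setminus T_i$ (it is a chord of the tree) and must bound the triangular region of Figure~\ref{fig:doubly-good} together with the two most recently added tree edges, at most one of the three edges of $\partial F_3$ can be turned into a doubly good edge during a single tree construction. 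The Reidemeister~3 move is precisely what redistributes the nonalternating edges to the three separated edges $\overline{v_1v_2}$, $\overline{v_3v_4}$, $\overline{v_5v_6}$ of $D'$, so that each can be harvested at a distinct stage as the closure of the tree sweeps through $\partial F_1'$, $\partial F_2'$, and $\partial F_3'$ in turn.

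Your accounting of where the three doubly good edges come from is also incorrect: you claim ``two more from the $n$-tangle, since $n\ge 2$ guarantees at least two nonalternating edges of the tangle,'' but an $n$-tangle is by definition an \emph{alternating} tangle and contributes no nonalternating edges of its own. The condition $n\ge 2$ (and condition~(3), $|\partial F_2|\ge n+3$, which becomes $|\partial F_2'|\ge n+2$ after the Reidemeister move) is there to give the tree enough room along $\partial F_2'$ to make the relevant extensions good, not to supply extra nonalternating edges. The high-level idea that three doubly good edges lower the Bae--Park count $c(D)+2$ to $c(D)-1$ is correct, as is your use of Lemma~\ref{lem:step 2}, Lemma~\ref{lem:Jin-Park-tree} for (B3)-detours, Lemma~\ref{lem:d-good-1}, Lemma~\ref{lem:d-good-2,3}, and Corollary~\ref{cor:d-good-2,3}; but without first passing to $D'$ these lemmas cannot produce three separate doubly good edges, so the construction does not close.
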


\begin{figure}[ht]
         \begin{center}
              \includegraphics[scale=0.8]{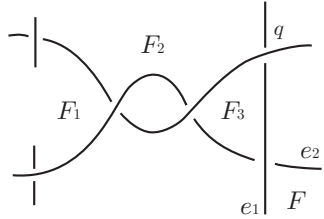}
        \end{center}
        \caption{{$2$-nonalternating diagram}}\label{fig:2-nonalt}
\end{figure}

\begin{theorem}\label{theorem:almost}
Let $D$ be a prime, almost alternating, and minimal crossing knot diagram having a nonalternating triangular face $F_3$. Suppose that faces $F_1$, $F_2$, $F_3$, an edge $e$ and a vertex $q$ of $D$ are labeled as in Figure~\ref{fig:almost}.
Let $F$ be the union of two faces containing $e$ in the intersection of their boundaries.
Then $\alpha(D)<c(D)$ if $D$ satisfies the two conditions below:
\begin{enumerate}
    \item $F\cap{(F_1 \cup F_2)}=\{q\}$
    \item There are two vertices $v\in{\partial F}$, $w\in{\partial F_2}$ and a string $a_{vw}$ of $D$ joining $v$ and $w$ such that no edge of $\partial F_1 \cup \partial F_3$ is contained in $a_{vw}$.
\end{enumerate}
\end{theorem}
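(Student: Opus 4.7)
The plan is to construct a good filtered spanning tree of $D$ carrying at least three doubly good edges. Because each doubly good edge trims one arc off the baseline $c(D)+2$ given by property~(\ref{cond:region+spoke=c+2}), and the rephrased Theorem~\ref{thm:2007, Jin-Park} already yields two such edges in any prime nonalternating diagram, exhibiting a third will give $\alpha(D)\le c(D)-1<c(D)$.

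I would first anchor the filtered tree at a vertex of $\partial F_3$ and, by Lemma~\ref{lem:d-good-1} applied to the nonalternating triangular face $F_3$, extend along the two edges of $\partial F_3$ adjacent to this vertex so as to make the opposite edge of $\partial F_3$ doubly good. Any (B3)-extension encountered along the way is rectified using Lemma~\ref{lem:Jin-Park-tree} and Corollary~\ref{cor:d-good-2,3}; the primality and crossing-minimality of $D$ rule out (B1) and (B2). A second application of the same device, based at a different vertex of $F_3$ and reached via Lemma~\ref{lem:step 2} so that the filtered tree circulates around $F_3$ without absorbing the remaining nonalternating edges, yields a second doubly good edge on $\partial F_3$ inside the \emph{same} filtered tree.

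For the third doubly good edge, I would exploit the compound face $F$ together with conditions~(1) and (2). Condition~(1), $F\cap(F_1\cup F_2)=\{q\}$, guarantees that extensions of the filtered tree along the boundary of $F$ do not prematurely absorb $\partial F_1$ or $\partial F_2$, while condition~(2) supplies a string $a_{vw}$ joining $\partial F$ to a vertex of $\partial F_2$ that avoids $\partial F_1\cup\partial F_3$, keeping the two doubly good edges already built on $\partial F_3$ safe. Using Lemma~\ref{lem:step 2} to grow the tree along $\partial F$ opposite $F_3$ and then along $a_{vw}$ (with Lemma~\ref{lem:Jin-Park-tree} replacing any (B3)-extensions by good-extension sequences), I arrive at $\partial F_2$; a final invocation of Lemma~\ref{lem:d-good-1} or Corollary~\ref{cor:d-good-2,3} then promotes the nonalternating edge of $\partial F_2$ meeting $F_3$ at $q$ to a third doubly good edge. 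Lemma~\ref{lem:Bae-Park-tree} completes the tree to a good filtered spanning tree without disturbing any of the three doubly good edges.

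The main obstacle is verifying that each stage of this multi-part construction produces a bona fide good extension, correctly discriminating between (B1), (B2), and (B3) at every step, and ensuring that the (B3)-replacements supplied by Lemma~\ref{lem:Jin-Park-tree} do not inadvertently swallow an edge earmarked to become doubly good. Conditions~(1) and (2) are exactly the combinatorial data needed to exclude the cut-point obstructions that would otherwise arise in this coordinated construction and to let the three targeted nonalternating edges coexist as doubly good edges in a single spanning tree.
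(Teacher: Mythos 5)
Your high-level plan (a good filtered spanning tree with three doubly good edges gives $\alpha(D)\le c(D)+2-3<c(D)$) agrees with the paper, but you have omitted the step that makes the three doubly good edges available: the paper first performs a type~3 Reidemeister move over the nonalternating triangular face $F_3$, obtaining a diagram $D^\prime$ with $c(D^\prime)=c(D)$ in which three nonalternating edges $\overline{v_1v_2}$, $\overline{v_3v_4}$, $\overline{v_4v_5}$ sit on three faces $F_1^\prime$, $F_2^\prime$, $F_3^\prime$, and only then builds the filtered tree (the conditions (1) and (2) are also used only after being translated into conditions $(1^\prime)$, $(2^\prime)$ on $D^\prime$, with the degenerate string $a_{vw}=\overline{v_4v_3}$ excluded). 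Working directly in $D$, as you propose, breaks down concretely at your second step: applying Lemma~\ref{lem:d-good-1} to the triangular face $F_3$ forces two of its three edges into the tree as the consecutive tree edges $e_{m+1}$, $e_{m+2}$, and a doubly good edge is in particular a good edge, i.e.\ an element of $\overline T_i\setminus T_i$, never a tree edge. Hence $F_3$ can contribute at most one doubly good edge, and your ``second application of the same device\dots on $\partial F_3$'' has no edge of $\partial F_3$ left to act on.

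There is also a more global obstruction to finding all three doubly good edges in $D$ itself. Doubly good edges must be nonalternating, and in an almost alternating minimal diagram the nonalternating edges are exactly the four edges incident to the single dealternator crossing. When that vertex first enters the tree along one of these four edges, that edge is a tree edge (so never doubly good), the opposite edge cannot be good there, and only the two transverse edges can become good at that step; manufacturing three doubly good edges out of this tightly clustered configuration is precisely what the Reidemeister move is designed to make possible, by redistributing the nonalternating data onto $\partial F_1^\prime$, $\partial F_2^\prime$ and $\partial F_3^\prime$. Without that move, neither the existence of a suitably placed ``nonalternating edge of $\partial F_2$'' nor the coexistence of your three targets in one good filtered spanning tree is established.
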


\begin{figure}[t]
         \begin{center}
              \includegraphics[scale=0.66]{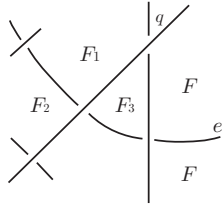}
        \end{center}
        \caption{{almost alternating}diagram}\label{fig:almost}
   \end{figure}

\section{Proofs of Main Theorems}

\subsection{Proof of Theorem~\ref{theorem:n-1-nonalt}}
We give a proof of the case $n=2$. It can be easily adapted for $n>2$.

Let $D^\prime$ be the diagram obtained from $D$ by a type $3$ Reidemeister move over the face $F_3$.
Some vertices, edges, and faces of $D^\prime$ are labeled as in Figure~\ref{fig:(2,1)-nonalt-rm3}.
The vertices $v_1,\ldots,v_7$ are all distinct except in the case that $\partial F_1$ of $D$ consists of only
four edges where $v_1=v_2$.
The two conditions of the theorem 
are modified to the following conditions on the diagram $D^\prime$:

\begin{enumerate}
\item[($1^\prime$)]
The face $F^\prime$ satisfies ${e_1^\prime \cup e_2^\prime}\subset \partial{ F^\prime}$ and $F^\prime
\cap{(F_1^\prime \cup F_2^\prime)}=\emptyset$
\item[($2^\prime$)]
There are two vertices $v\in{\partial F^\prime}$ and $w\in{\partial F_2^\prime}$ and a string $a_{vw}$ of $D$ joining $v$ and $w$ such that no edge of $\partial F_1^\prime \cup \partial F_3^\prime$ is contained in $a_{vw}$. The case $a_{vw}=\overline{v_7 v_5}$ is excluded.
\end{enumerate}

  \begin{figure}[ht]
         \begin{center}
              \includegraphics[scale=0.9]{_2,1_-nonalt.eps}
              \quad\raise35pt\hbox{$\Longrightarrow$}\quad
              \includegraphics[scale=0.8]{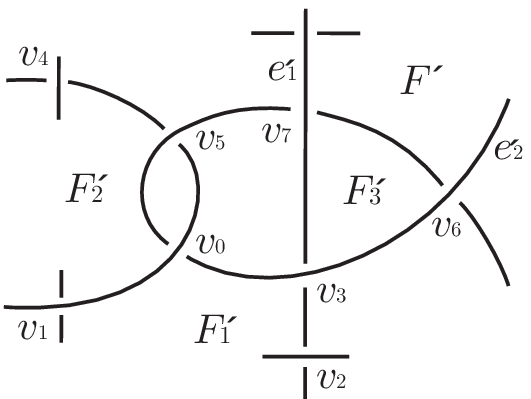}
        \end{center}
        \caption{{A type 3 Reidemeister move}}\label{fig:(2,1)-nonalt-rm3}
   \end{figure}

In this proof, we will construct a filtered tree whose successive closures gradually contain $\partial
F_1^\prime, \partial F_2^\prime$ and $\partial F_3^\prime$ without introducing bad edges and cutting
arcs.
During the construction, the nonalternating edges $\overline{v_2 v_3}$,
$\overline{v_4 v_5}$ and $\overline{v_6 v_7}$ will appear as doubly good edges in this order.

\medskip\noindent
{\sc Step 1.}
\emph{The edge $\overline{v_2v_3}$ becomes doubly good.}

Let $v_2^\prime$ be the vertex of $\partial F^\prime_1$ such that $v_0,v_3,v_2,v^\prime_2$ are adjacent along $\partial F^\prime_1$ in this order.
Applying Lemma~\ref{lem:d-good-1}, we obtain a good filtered tree $v_0=T_0\subset\cdots\subset T_m$ such that $T_m$ contains all vertices of $\partial F^\prime_1$ except $v_2$ and $v_3$, and its good extension $T_{m+1}$ along $\overline{v^\prime_2v_2}$. Extending once more along $\overline{v_0v_3}$, we obtain $T_{m+2}$. The edge $\overline{v_6v_7}$ obstructs the existence of a (B3) cutting arc for $T_{m+2}$. By Lemma~\ref{lem:d-good-1}, $\overline{v_2v_3}$ is doubly good in $\overline T_{m+2}$.

\medskip
\noindent{\sc Step 2.} \emph{The edge $\overline{v_4 v_5}$ becomes doubly good.}

Let $v_4^\prime$ be the vertex of $\partial F^\prime_2$ such that $v_0,v_5,v_4,v^\prime_4$ are adjacent along $\partial F^\prime_2$ in this order.
By Lemma~\ref{lem:step 2}, we have a sequence of good extensions  $T_{m+2}\subset\cdots\subset T_{n}$ such that $\overline T_{n}$ contains all edges of $\partial F^\prime_2$ except $\overline{v^\prime_4v_4}$, $\overline{v_4v_5}$ and $\overline{v_5v_0}$.
By ($2^\prime$) and by $D$ being prime and minimal,
the extension $T_n\cup\overline{v^\prime_4v_4}$ cannot be (B1) nor (B2). If it is (B3) then, applying Lemma~\ref{lem:d-good-1}, we can replace $T_n$ by a larger tree $T_{n^\prime}$ so that $T_{n^\prime}\cup\overline{v^\prime_4v_4}$ is a good extension. By the same reasons, the extension $T_{n^\prime+2}=T_{n^\prime}\cup\overline{v^\prime_4v_4}\cup\overline{v_0v_5}$ is not (B1) nor (B2).
The edge $\overline{v_3v_7}$ obstructs the existence of a (B3) cutting arc for $T_{n^\prime+2}$. By Lemma~\ref{lem:d-good-1}, $\overline{v_4v_5}$ is doubly good in $\overline T_{n^\prime+2}$.

\medskip
\noindent{\sc Step 3.} \emph{The edge $\overline{v_6 v_7}$ becomes doubly good.}

By ($1^\prime$) and by $D$ being prime and minimal,
the extension $T_{n^\prime+2}\cup\overline{v_3v_6}$ cannot be (B1) nor (B2).
If it is (B3) then, applying Lemma~\ref{lem:d-good-1}, we can replace $T_{n^\prime+2}$ by a larger tree $T_{n^{\prime\prime}}$ so that $T_{n^{\prime\prime}}\cup\overline{v_3v_6}$ is a good extension.
Now we consider the extension $T_{n^{\prime\prime}+2}=T_{n^{\prime\prime}}\cup\overline{v_3v_6}\cup\overline{v_3v_7}$.
It cannot be (B1) by one of the conditions ($1^\prime$), ($2^\prime$), $D^\prime$ being prime and minimal, depending on the location of the endpoint $c\in T_{n^{\prime\prime}}\cup\overline{v_3v_6}$ of the bad edge $e_1^\prime$.
It cannot be (B2) nor (B3) by one of the conditions ($1^\prime$), ($2^\prime$) and $D^\prime$ being prime, depending on the location of the endpoint $c\in T_{n^{\prime\prime}}\cup\overline{v_3v_6}$ of the cutting arc $\Gamma_p$ where $p=v_7$.
By Lemma~\ref{lem:d-good-2,3} and Corollary~\ref{cor:d-good-2,3}, the nonalternating edge $\overline{v_6v_7}$ a is doubly good edge of $\overline T_{n^{\prime\prime}+2}$.

\subsection{Proof of Theorem~\ref{theorem:n-nonalt}}
Let $D^\prime$ be the diagram obtained from $D$ by a type $3$ Reidemeister move over the face $F_3$.
Some vertices and faces of $D^\prime$ are labeled as in Figure~\ref{fig:2-nonalt-rm3}.
The vertices $v_0,\ldots,v_6$ are all distinct.
The three conditions of the theorem 
are modified to the following conditions on the diagram $D^\prime$:
\begin{enumerate}
\item[($1^\prime$)]
The face $F^\prime$ satisfies  ${e_1^\prime \cup e_2^\prime}\subset \partial{ F^\prime}$ and $F^\prime
\cap{(F_1^\prime \cup F_2^\prime)}=\emptyset$
\item[($2^\prime$)]
There are two vertices $v\in{\partial F^\prime}$ and $w\in{\partial F_2^\prime}$ and a string $a_{vw}$ of $D$ joining $v$ and $w$ such that no edge of $\partial F_1^\prime \cup \partial F_2^\prime \cup \partial F_3^\prime$ is contained in $a_{vw}$.
The case $a_{vw}=\overline{v_6 v_2}$ is excluded.
\item[($3^\prime$)]
 $\partial F_2^\prime $ consists at least $n+2$ edges.
\end{enumerate}

  \begin{figure}[h]
         \begin{center}
              \includegraphics[scale=0.8]{2-nonalt.eps}
              \quad\raise40pt\hbox{$\Longrightarrow$}\quad
              \includegraphics[scale=0.8]{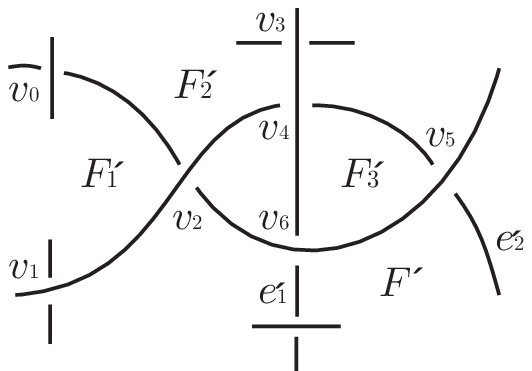}
        \end{center}
        \caption{{A type 3 Reidemeister move}}\label{fig:2-nonalt-rm3}
   \end{figure}

Similarly as in the proof of Theorem~\ref{theorem:n-1-nonalt}, we construct a filtered tree whose successive closures gradually contain $\partial F_1^\prime, \partial F_2^\prime$ and $\partial F_3^\prime$ without introducing bad edges and cutting
arcs.
During the construction, the nonalternating edges $\overline{v_1 v_2}$,
$\overline{v_3 v_4}$ and $\overline{v_5 v_6}$ will appear as doubly good edges.
We skip the detail.

\subsection{Proof of Theorem~\ref{theorem:almost}}
Let $D^\prime$ be the diagram obtained from $D$ by a type $3$ Reidemeister move over the face $F_3$.
Some vertices and faces of $D^\prime$ are labeled as in Figure~\ref{fig:almost-rm3}.
The vertices $v_0,\ldots,v_5$ are all distinct.
Let $F^\prime$ be the union of two faces containing $e^\prime$ in the intersection of their boundaries.
The two conditions of the theorem 
are modified to the following conditions on the diagram $D^\prime$:
\begin{enumerate}
\item[($1^\prime$)]
 $F^\prime \cap{(F_1^\prime \cup F_2^\prime)}=\{v_4\}$
\item[($2^\prime$)]
There are two vertices $v\in{\partial F^\prime}$ and $w\in{\partial F_2^\prime}$ and a string $a_{vw}$ of $D$ joining $v$ and $w$ such that no edge of $\partial F_1^\prime \cup \partial F_3^\prime$ is contained in $a_{vw}$.
The case $a_{vw}=\overline{v_4 v_3}$ is excluded.
\end{enumerate}
  \begin{figure}[h]
         \begin{center}
              \includegraphics[scale=0.8]{alm-alt-110317.eps}
              \quad\raise45pt\hbox{$\Longrightarrow$}\quad
              \includegraphics[scale=0.8]{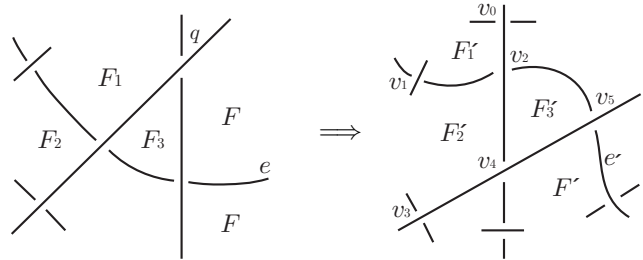}
        \end{center}
        \caption{{A type 3 Reidemeister move}}\label{fig:almost-rm3}
   \end{figure}

Similarly as in the proof of Theorem~\ref{theorem:n-1-nonalt}, we construct a filtered tree whose successive closures gradually contain $\partial F_1^\prime, \partial F_2^\prime$ and $\partial F_3^\prime$ without introducing bad edges and cutting
arcs.
During the construction, the nonalternating edges $\overline{v_1 v_2}$,
$\overline{v_3 v_4}$ and $\overline{v_4 v_5}$ will appear as doubly good edges.
We skip the detail.


\section{Examples and non-examples}\label{sec:examples}
\subsection{Examples of Theorem~\ref{theorem:n-1-nonalt}}
The first of the three diagrams in each figure is the minimal diagram which is $(n,1)$-nonalternating. The second is obtained by a type 3 Reidemeister move over the face $F_3$. The third is marked with a good filtered tree whose closure has three doubly good edges. If a black-thickend edge is $\overline{ij}$ with $i<j$ then it is the $j$-th edge of the tree. This comment also applies to the subsection \ref{subsec:n-nonalt-examples}.

\begin{figure}[h]
         \centering 
              \includegraphics[scale=0.78]{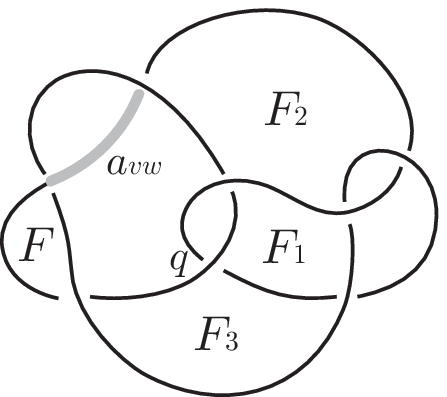}
              \includegraphics[scale=0.78]{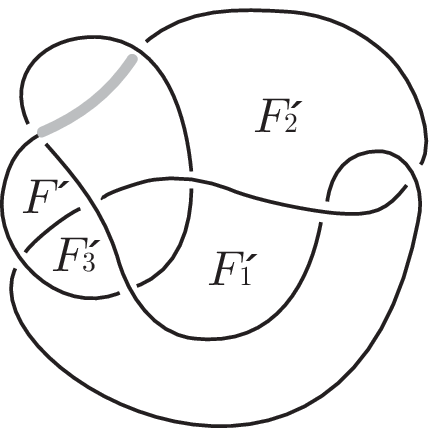}
              \includegraphics[scale=0.78]{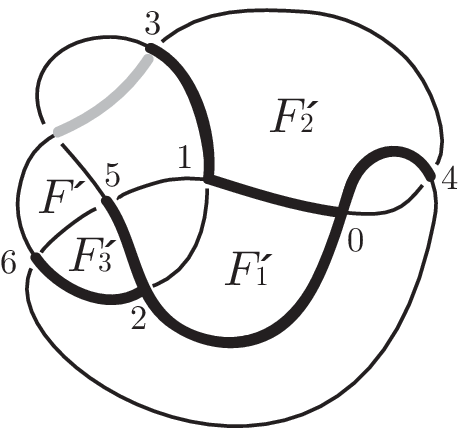}
         \caption{\small{$(2,1)$-nonlaternating : $\alpha(8n3)=7$}}
      \end{figure}

\begin{figure}[h]
         \begin{center}
              \includegraphics[scale=0.78]{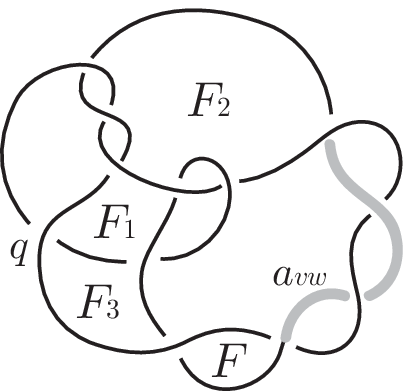}
              \includegraphics[scale=0.78]{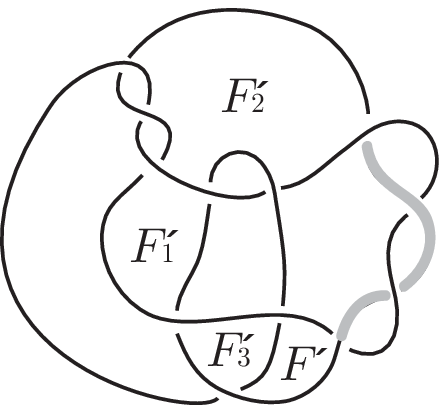}
              \includegraphics[scale=0.78]{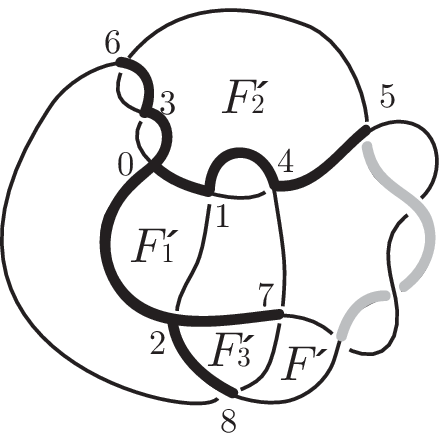}
          \end{center}
         \caption{{$(3,1)$-nonlaternating : $\alpha(12n475)=10$}}
      \end{figure}

\begin{figure}[h!]
         \begin{center}
              \includegraphics[scale=0.78]{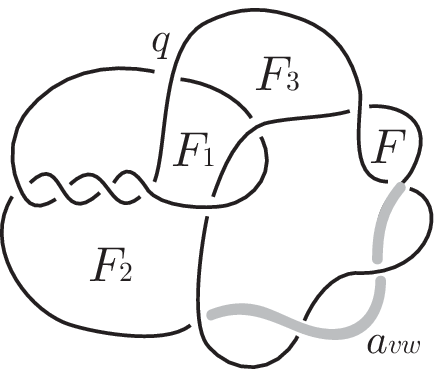}
              \includegraphics[scale=0.78]{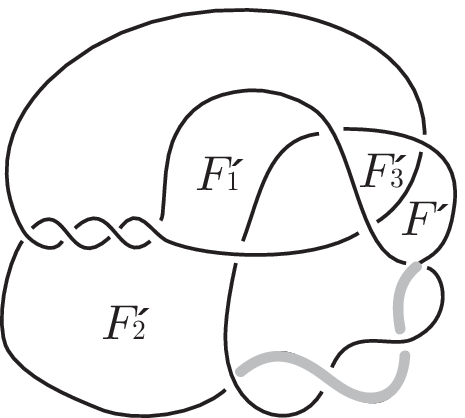}
              \includegraphics[scale=0.78]{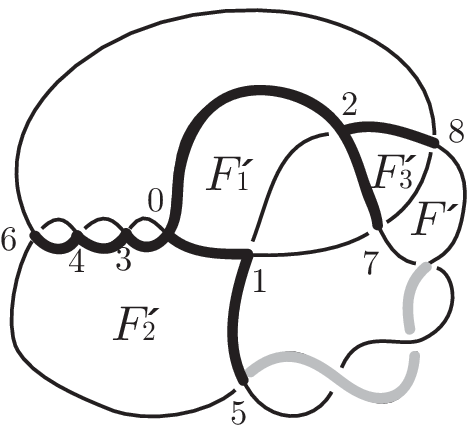}
          \end{center}
         \caption{{$(4,1)$-nonlaternating : $\alpha(12n725)=10$}}
      \end{figure}

\subsection{Non-examples of Theorem~\ref{theorem:n-1-nonalt}}
Each figure shows same diagram twice with different choices of faces $F_1$, $F_2$, $F_3$ and $F$. One can check that a condition of the theorem does not hold. This comment also applies to the subsection \ref{subsec:n-nonalt-nonexamples}.
\begin{figure}[h]
         \begin{center}
              \includegraphics[scale=0.8]{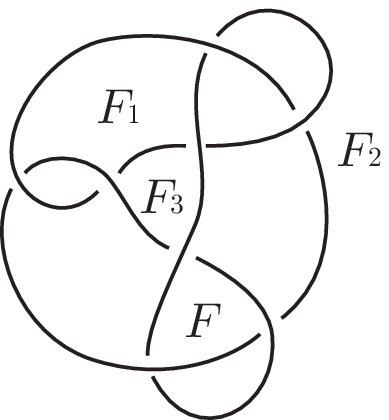} \hspace{1cm}
              \includegraphics[scale=0.8]{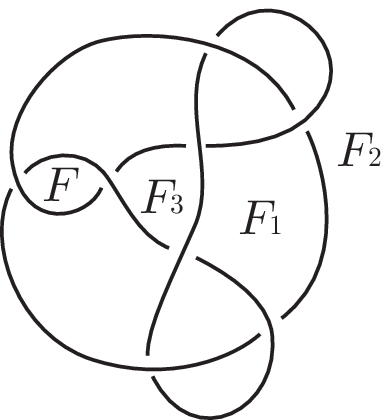}
          \end{center}
         \caption{{$(2,1)$-nonlaternating : $\alpha(8n2)=8$}}
      \end{figure}

\begin{figure}[h]
         \begin{center}
              \includegraphics[scale=0.8]{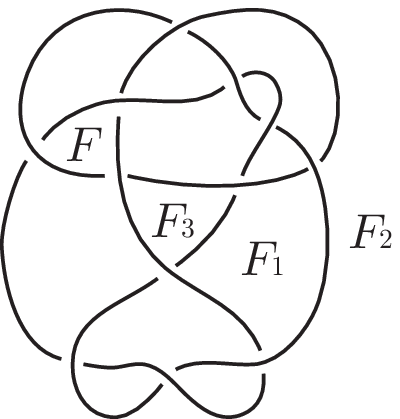} \hspace{1cm}
              \includegraphics[scale=0.8]{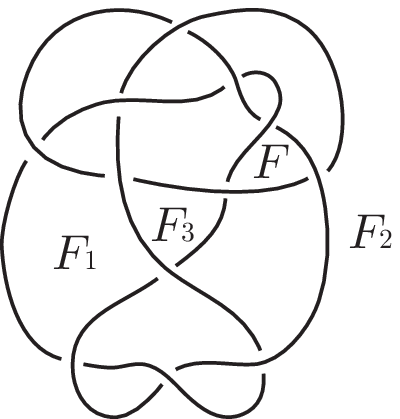}
          \end{center}
         \caption{{$(3,1)$-nonlaternating : $\alpha(12n699)=12$}}
      \end{figure}

\begin{figure}[h!]
         \begin{center}
              \includegraphics[scale=0.8]{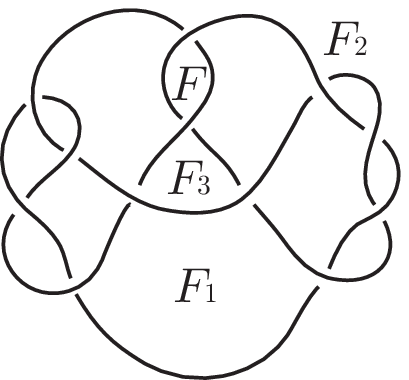} \hspace{1cm}
              \includegraphics[scale=0.8]{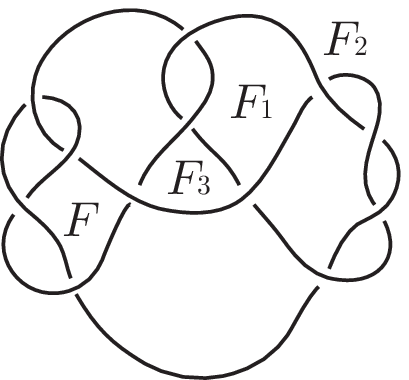}
          \end{center}
         \caption{{$(4,1)$-nonlaternating : $\alpha(12n305)=12$}}
      \end{figure}

\clearpage
\subsection{Examples of Theorem~\ref{theorem:n-nonalt}}\label{subsec:n-nonalt-examples}
{\ }
  \begin{figure}[h!]
         \begin{center}
             \includegraphics[scale=0.78]{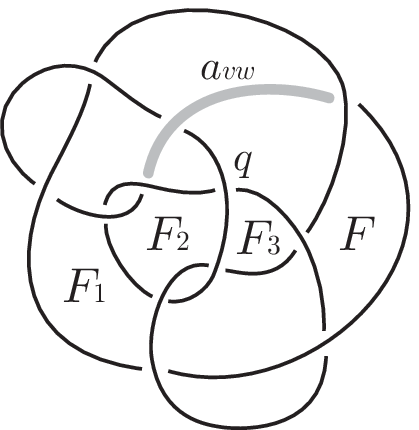}
             \includegraphics[scale=0.78]{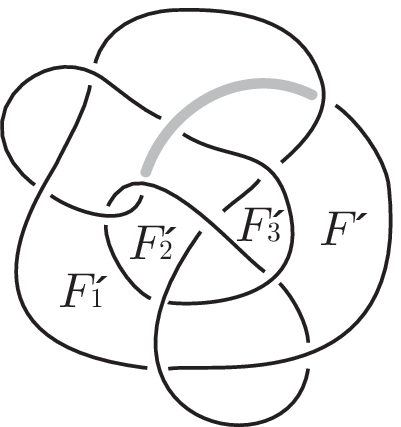}
             \includegraphics[scale=0.78]{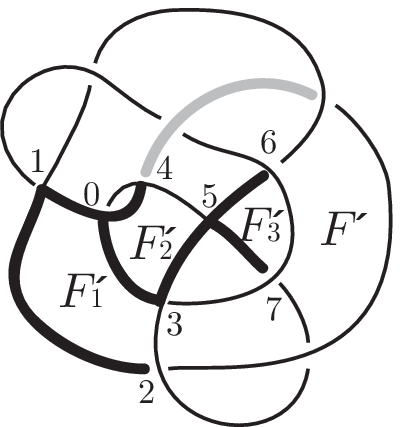}
        \end{center}
        \caption{{$2$-nonalternating : $\alpha(12n810)=11$}}
   \end{figure}

  \begin{figure}[h]
         \begin{center}
              \includegraphics[scale=0.78]{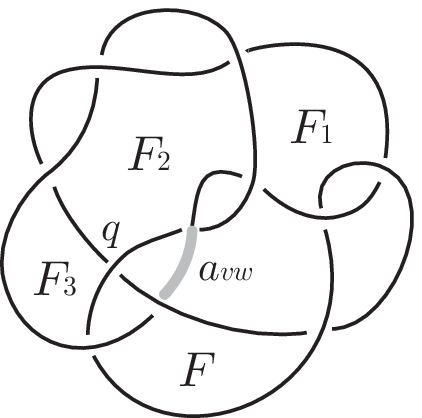}
              \includegraphics[scale=0.78]{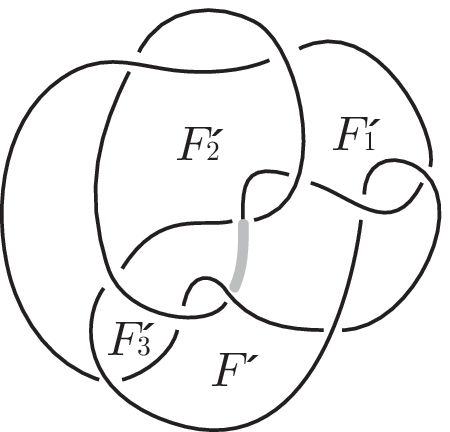}
              \includegraphics[scale=0.78]{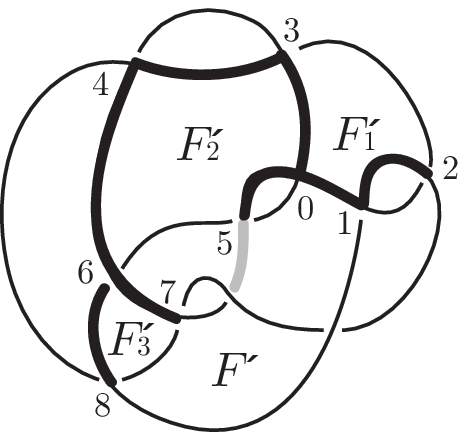}
        \end{center}
        \caption{{$3$-nonalternating : $\alpha(11n110)=10$}}
   \end{figure}

  \begin{figure}[h]
         \begin{center}
              \includegraphics[scale=0.78]{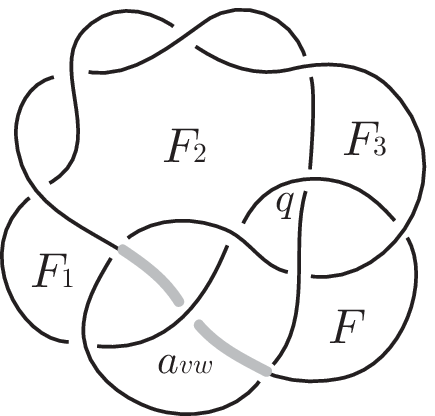}
              \includegraphics[scale=0.78]{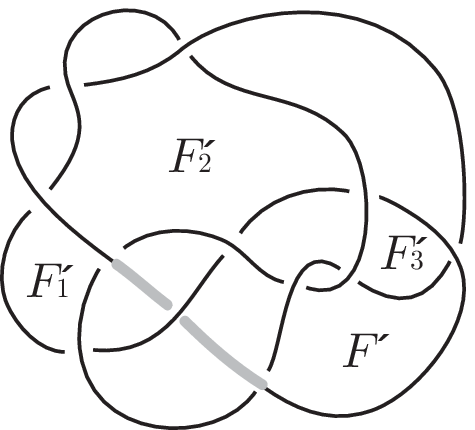}
              \includegraphics[scale=0.78]{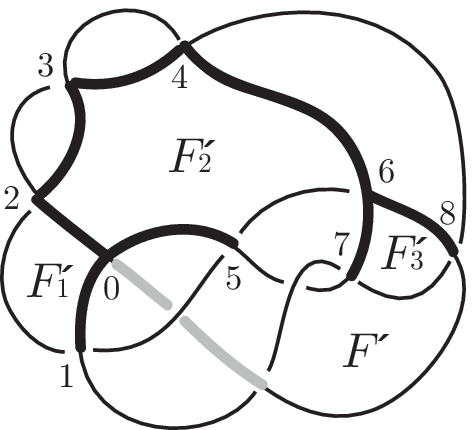}
        \end{center}
        \caption{{$4$-nonalternating : $\alpha(12n847)=11$}}
   \end{figure}

\subsection{Non-examples of Theorem~\ref{theorem:n-nonalt}}\label{subsec:n-nonalt-nonexamples}
  \begin{figure}[b]
         \begin{center}
             \includegraphics[scale=0.9]{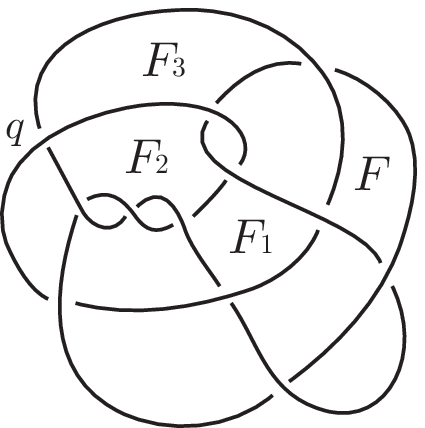} \hspace{1cm}
             \includegraphics[scale=0.9]{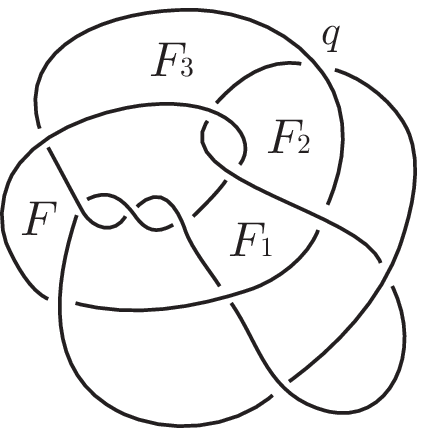}
        \end{center}
        \caption{{$2$-nonalternating : $\alpha(12n777)=12$}}
   \end{figure}

\clearpage
  \begin{figure}[t]
         \begin{center}
              \includegraphics[scale=0.9]{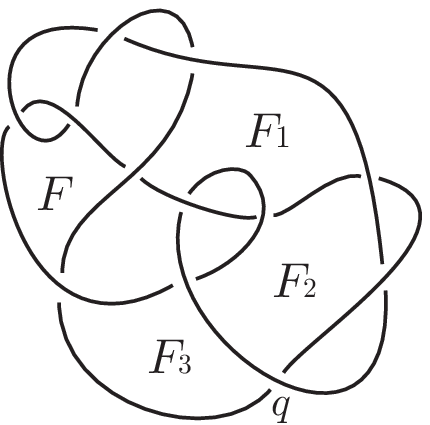}\hspace{1cm}
              \includegraphics[scale=0.9]{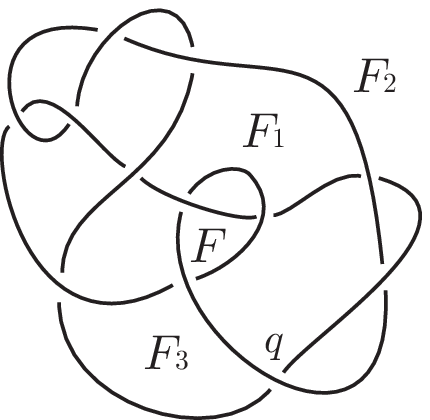}
        \end{center}
        \caption{{$3$-nonalternating : $\alpha(12n389)=12$}}
   \end{figure}

  \begin{figure}[h!]
         \begin{center}
              \includegraphics[scale=0.9]{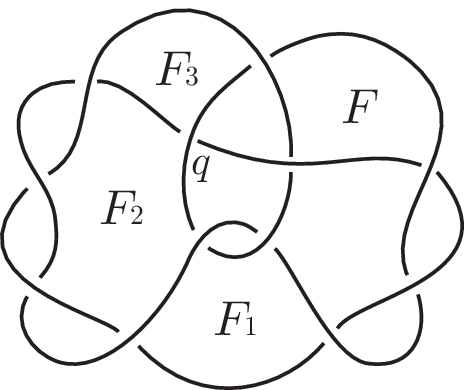} \hspace{1cm}
              \includegraphics[scale=0.9]{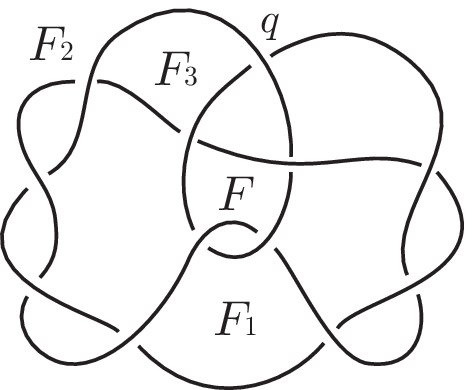}
        \end{center}
        \caption{{$4$-nonalternating : $\alpha(12n767)=12$}}
   \end{figure}


\section{Nonalternating knots with $\alpha(K)=c(K)-1$}
In \cite{N1999} Nutt identified all knots up to arc index 9.
In \cite{B2002} Beltrami determined arc index for prime knots up to 10 crossings.
In \cite{Jin2006} Jin et al. identified all prime knots up to arc index 10.
In \cite{Ng2006} Ng determined arc index for prime knots up to 11 crossings.
In \cite{Jin-Park2007} Jin and Park identified the prime knots up to arc index 11.

Using the Dowker-Thistlethwaite codes contained in Knotscape~\cite{knotscape}, we made lists of $13$ crossing knots  and $14$ crossing knots which are $(n,1)$-nonalternating, $n$-nonlaternating or almost alternating. Applying the conditions listed in the theorems, we were able to find the lists below.

\subsection{A partial list of 13 crossing knots with arc index 12}
The 13 crosssing knots in the lists below do not appear in the article \cite{Jin-Park2010} containing all prime knots up to arc index 11. Using the methods described in the proofs of main theorems, we were able to find grid diagrams of them with 12 vertical arcs.
In the grid diagrams below, we have the convention that the vertical edges pass over the horizontal edges.

\setlength{\unitlength}{1.05pt}

\subsection*{\boldmath$(2,1)$-nonalternating}\vrule width0pt depth10pt\newline
\noindent
{\small


\clearpage
\section*{Acknowledgments}
This research was supported by Basic Science Research Program
        through the National Research Foundation of Korea(NRF) funded
        by the Ministry of Education, Science and Technology(2010-0013742).

\end{document}